\begin{document}

\newenvironment{proof}[1][Proof]{\textbf{#1.} }{\ \rule{0.5em}{0.5em}}

\newtheorem{theorem}{Theorem}[section]
\newtheorem{definition}[theorem]{Definition}
\newtheorem{lemma}[theorem]{Lemma}
\newtheorem{remark}[theorem]{Remark}
\newtheorem{proposition}[theorem]{Proposition}
\newtheorem{corollary}[theorem]{Corollary}
\newtheorem{example}[theorem]{Example}

\numberwithin{equation}{section}
\newcommand{\ep}{\varepsilon}
\newcommand{\R}{{\mathbb  R}}
\newcommand\C{{\mathbb  C}}
\newcommand\Q{{\mathbb Q}}
\newcommand\Z{{\mathbb Z}}
\newcommand{\N}{{\mathbb N}}

\newcommand{\bfi}{\bfseries\itshape}

\newsavebox{\savepar}
\newenvironment{boxit}{\begin{lrbox}{\savepar}
\begin{minipage}[b]{15.5cm}}{\end{minipage}\end{lrbox}
\fbox{\usebox{\savepar}}}

\title{{\bf A stability criterion for non-degenerate equilibrium states of completely integrable systems}}
\author{R\u{a}zvan M. Tudoran}

\date{}
\maketitle \makeatother

\begin{abstract}
We provide a criterion in order to decide the stability of non-degenerate equilibrium states of completely integrable systems. More precisely, given a Hamilton-Poisson realization of a completely integrable system generated by a smooth $n-$ dimensional vector field, $X$, and a non-degenerate regular (in the Poisson sense) equilibrium state, $\overline{x}_e$, we define a scalar quantity, $\mathcal{I}_{X}(\overline{x}_e)$, whose sign determines the stability of the equilibrium. Moreover, if $\mathcal{I}_{X}(\overline{x}_e)>0$, then around $\overline{x}_e$, there exist one-parameter families of periodic orbits shrinking to $\{\overline{x}_e \}$, whose periods approach $2\pi/\sqrt{\mathcal{I}_{X}(\overline{x}_e)}$ as the parameter goes to zero. The theoretical results are illustrated in the case of the Rikitake dynamical system.
\end{abstract}

\medskip

\textbf{AMS 2010}: 37J25; 37J45; 37J05; 86A25.

\textbf{Keywords}: completely integrable systems; stability; equilibrium states; periodic orbits; Rikitake system.

\section{Introduction}
\label{section:one}
The aim of this article is to provide a criterion in order to decide the stability of non-degenerate equilibrium states of completely integrable systems. More precisely, given a Hamiltonian realization (of Poisson type) of a completely integrable system generated by a smooth $n-$dimensional vector field, $X$, and a non-degenerate regular (in the Poisson sense) equilibrium state, $\overline{x}_e$, we define a scalar quantity, $\mathcal{I}_{X}(\overline{x}_e)$, whose sign determines the stability of $\overline{x}_e$, i.e., \textit{if $\mathcal{I}_{X}(\overline{x}_e)>0$ then $\overline{x}_e$ is Lyapunov stable, whereas if $\mathcal{I}_{X}(\overline{x}_e)< 0$ then $\overline{x}_e$ is unstable}. Moreover, as the characteristic polynomial of the linearization of $X$ at $\overline{x}_e$, $\mathfrak{L}^{X}(\overline{x}_e)$, is given by $p_{\mathfrak{L}^{X}(\overline{x}_e)}(\mu)=(-\mu)^{n-2}\cdot\left( \mu^2 + \mathcal{I}_{X}(\overline{x}_e) \right)$, it follows that $\mathcal{I}_{X}(\overline{x}_e)$ depends only on $X$ and $\overline{x}_e$, and not on the Hamiltonian realization. Also, if we denote by $\Sigma_{\overline{x}_e}$, the symplectic leaf (passing through $\overline{x}_e$) of the Poisson configuration manifold of the Hamiltonian realization, then the sign of $\mathcal{I}_{X}(\overline{x}_e)$ determines again the stability of $\overline{x}_e$, this time regarded as an equilibrium state of the restricted vector field $X|_{\Sigma_{\overline{x}_e}}$. Moreover, if $\mathcal{I}_{X}(\overline{x}_e)>0$, then there exists $\varepsilon_{0}>0$ and a one-parameter family of periodic orbits of $X|_{\Sigma_{\overline{x}_e}}$ (and hence of $X$ too), $\left\{\gamma_{\varepsilon}\right\}_{0<\varepsilon\leq\varepsilon_0}\subset \Sigma_{\overline{x}_e}$, that shrink to $\{\overline{x}_e \}$ as $\varepsilon\rightarrow 0$, with periods $T_{\varepsilon}\rightarrow{\frac{2\pi}{\sqrt{\mathcal{I}_{X}(\overline{x}_e)}}}$ as $\varepsilon\rightarrow 0$. Also, the set $\{\overline{x}_e\}\cup\bigcup_{0<\varepsilon < \varepsilon_0} \gamma_{\varepsilon}$ represents the connected component of  $\Sigma_{\overline{x}_e}\setminus \gamma_{\varepsilon_{0}}$, which contains the equilibrium point $\overline{x}_e$. Note that by choosing a different Hamiltonian realization of the completely integrable system, for which $\overline{x}_e$ is also a non-degenerate regular equilibrium point, we obtain the existence of a different family of periodic orbits with the same properties, this time the orbits being located on the regular symplectic leaf (passing through $\overline{x}_e$) corresponding to the Poisson configuration manifold associated to this specific Hamiltonian realization. On the applicative level, all theoretical results are illustrated in the case of the Rikitake dynamical system.

More precisely, the structure of the article is the following: the second section contains a a short introduction to the geometry associated to a general completely integrable system. More precisely, using the property that any completely integrable system admits Hamiltonian realizations of Poisson type, we briefly present the associated Poisson geometry, and its relations with the dynamics generated by the system. The aim of the third section is to characterize the set of equilibrium states of a general completely integrable system, and also to analyze the geometric and analytic properties of certain subsets of equilibria, naturally associated with the Poisson geometry of the Hamiltonian realizations of the system. In fourth section of the article we define the scalar quantity $\mathcal{I}_{X}(\overline{x}_e)$, and analyze its main geometric and analytic properties. The fifth section is the main part of this article and contains the main result, which provides a criterion to test the stability of non-degenerate regular equilibrium states of Hamiltonian realizations of completely integrable systems. The aim of the sixth section is to give a criterion to decide leafwise stability of non-degenerate regular equilibria of Hamiltonian realizations of completely integrable systems, and also to study the local existence of periodic orbits. In the last section, we illustrate the main theoretical results in the case of a concrete example coming from geophysics, namely, the so called Rikitake two-disc dynamo system.

\section{A geometric formulation of completely integrable systems}

The aim of this section is to give a short introduction to the geometry associated to a general completely integrable system. More precisely, using the property that any completely integrable system admits Hamiltonian realizations of Poisson type (see e.g., \cite{tudoran}), we present the associated Poisson geometry, and its relations with the dynamics generated by the system.

In order to do that, let us start by recalling from \cite{tudoran} the Hamiltonian realization procedure of a completely integrable system. For similar Hamilton-Poisson and respectively Nambu-Poisson formulations of completely integrable systems, see e.g., \cite{abrahammarsden}, \cite{marsdenratiu}, \cite{nambu1}, \cite{nambu2}, \cite{ratiurazvan}.

Recall that a \textit{completely integrable system} is a $\mathcal{C}^{\infty}$ differential system defined on an open subset $\Omega\subseteq\R^n$,
\begin{equation}\label{sys}
\left\{\begin{array}{l}
\dot x_{1}=X_1(x_1,\dots,x_n)\\
\dot x_{2}=X_2(x_1,\dots,x_n)\\
\cdots\\
\dot x_{n}=X_n(x_1,\dots,x_n),\\
\end{array}\right.
\end{equation}
(where $X_1,X_2,\dots,X_n\in\mathcal{C}^{\infty}(\Omega,\R)$ are smooth functions), which admits a set of smooth first integrals, $C_1,\dots,C_{n-2},C_{n-1}:\Omega\rightarrow \R$, functionally independent almost everywhere with respect to the $n-$dimensional Lebesgue measure.

Since the smooth functions $C_1,\dots,C_{n-2},C_{n-1}:\Omega\rightarrow \R$ are constants of motion of the vector field $X=X_1\dfrac{\partial}{\partial{x_1}}+\dots+X_n \dfrac{\partial}{\partial{x_n}}\in\mathfrak{X}(\Omega)$, it follows that for each $i\in\{1,\dots,n-1\}$,
$$
\langle\nabla C_i(\overline{x}),X(\overline{x})\rangle =0,
$$
for every $\overline{x}=(x_1,\dots,x_n)\in\Omega$ (where $\langle \cdot,\cdot\rangle$ is the canonical inner product on $\R^n$, and $\nabla$ stands for the gradient with respect to $\langle \cdot,\cdot\rangle$).

Hence, as shown in \cite{tudoran}, the vector field $X$ is proportional with the vector field $\star (\nabla C_1\wedge\dots\wedge \nabla C_{n-1})$, where $\star$ stands for the Hodge star operator for multi-vector fields. It may happen that the domain of definition of the proportionality rescaling function, is a proper subset of $\Omega$. In order to simplify the notations, we shall work in the sequel on this subset, which will be also denoted by $\Omega$.  

Consequently, the vector field $X$ admits the local expression
\begin{equation}\label{sywedge}
X = (-\nu) \star(\nabla C_1 \wedge \dots \wedge\nabla C_{n-1}),
\end{equation}
where $\nu\in\mathcal{C}^{\infty}(\Omega,\R)$ stands for the rescaling function. Note that each permutation of the first integrals $C_1,\dots, C_{n-1}$ within the wedge product $\nabla C_1 \wedge \dots \wedge\nabla C_{n-1}$, together with a possible change of sign of the rescaling function, give rise to another realization of the vector field $X$ of type \eqref{sywedge}.

Let us fix from now on the realization \eqref{sywedge} of the vector field $X$. Following \cite{tudoran}, we shall express the vector field $X$ as a Hamilton-Poisson vector field, $X_H \in\mathfrak{X}(\Omega)$, with respect to the Hamiltonian function $H:=C_{n-1}$, and the Poisson bracket given by
$$
\{f,g\}_{\nu;C_1,\dots,C_{n-2}}\cdot \mathrm{d}x_1\wedge\dots\wedge \mathrm{d}x_n:=\nu \cdot \mathrm{d}C_1\wedge\dots \wedge\mathrm{d}C_{n-2}\wedge \mathrm{d}f\wedge \mathrm{d}g,
$$
for every $f,g\in\mathcal{C}^{\infty}(\Omega,\mathbb{R})$. Recall that a general \textit{Poisson bracket} on $\Omega$ is a bilinear map $\{\cdot,\cdot\}:\mathcal{C}^{\infty}(\Omega,\mathbb{R})\times \mathcal{C}^{\infty}(\Omega,\mathbb{R})\rightarrow \mathcal{C}^{\infty}(\Omega,\mathbb{R})$ that defines a Lie algebra structure on $\mathcal{C}^{\infty}(\Omega,\mathbb{R})$ and moreover is a derivation in each entry. A pair $(\Omega,\{\cdot,\cdot\})$, where $\{\cdot,\cdot\}$ is a Poisson bracket on $\Omega$, is called a \textit{Poisson manifold}. 

In order to have a self-contained presentation, let us briefly recall the main ingredients that come along with the Hamiltonian realization procedure. More precisely, recall first that the derivation property of the Poisson bracket implies that for any two functions $f,g\in\mathcal{C}^{\infty}(\Omega,\mathbb{R})$, the bracket $\{f,g\}_{\nu;C_1,\dots,C_{n-2}}(\overline{x})$ evaluated at an arbitrary point $\overline{x}\in\Omega$, depends on $f$ only through $\mathrm{d}f(\overline{x})$. This property allows us to define a contravariant antisymmetric $2-$tensor,  $\Pi_{\nu;C_1,\dots,C_{n-2}}$, given by 
$$
\Pi_{\nu;C_1,\dots,C_{n-2}}(\overline{x})(\alpha_{\overline{x}},\beta_{\overline{x}})=\{f,g\}_{\nu;C_1,\dots,C_{n-2}}(\overline{x}),
$$
where $\mathrm{d}f(\overline{x})=\alpha_{\overline{x}}\in T^{*}_{\overline{x}}\Omega\cong \mathbb{R}^n$ and $\mathrm{d}g(\overline{x})=\beta_{\overline{x}}\in T^{*}_{\overline{x}}\Omega \cong \mathbb{R}^n$. This tensor is called the \textit{Poisson tensor} or the \textit{Poisson structure} generated by the Poisson bracket $\{\cdot,\cdot\}_{\nu;C_1,\dots,C_{n-2}}$.

The vector bundle map $\Pi_{\nu;C_1,\dots,C_{n-2}}^{\sharp}:T^{*}\Omega\to T\Omega$, naturally associated to $\Pi_{\nu;C_1,\dots,C_{n-2}}$, is given for each $\overline{x}\in \Omega$ by the linear map $\left(\Pi_{\nu;C_1,\dots,C_{n-2}}^{\sharp}\right)_{\overline{x}}:T_{\overline{x}}^{*}\Omega\to T_{\overline{x}}\Omega$,
defined by the equality
\begin{equation*}
\Pi_{\nu;C_1,\dots,C_{n-2}}(\overline{x})(\alpha_{\overline{x}},\beta_{\overline{x}})=\left<
\alpha_{\overline{x}},\left(\Pi_{\nu;C_1,\dots,C_{n-2}}^{\sharp}\right)_{\overline{x}}(\beta_{\overline{x}})\right>, ~ (\forall) \alpha_{\overline{x}},\beta_{\overline{x}} \in T_{\overline{x}}^{*}\Omega.
\end{equation*}

The above defined bundle map induces for each $H\in\mathcal{C}^{\infty}(\Omega,\mathbb{R})$, a smooth vector field, 
$$X_H:=\Pi_{\nu;C_1,\dots,C_{n-2}}^{\sharp}(\mathrm{d}H),$$
called the \textit{Hamiltonian vector field} associated to $H$. 

As a differential operator, the Hamiltonian vector field $X_H$ acts on an arbitrary smooth function $f\in \mathcal{C}^{\infty}(\Omega,\R)$ as follows 
$$X_{H}(f)=\{f,H\}_{\nu;C_1,\dots,C_{n-2}}\in \mathcal{C}^{\infty}(\Omega,\R).$$

Consequently, a smooth function $f\in\mathcal{C}^{\infty}(\Omega,\mathbb{R})$ is a first integral of $X_H$ if and only if $\{f,H\}_{\nu;C_1,\dots,C_{n-2}}=0$.

Summarizing, we obtained that the completely integrable system \eqref{sys} admits the (local) Hamiltonian realization $\left(\Omega,\{\cdot,\cdot\}_{\nu;C_1,\dots,C_{n-2}}, H:=C_{n-1} \right)$, defined on the Poisson manifold $\left(\Omega,\{\cdot,\cdot\}_{\nu;C_1,\dots,C_{n-2}} \right)$. More precisely, the dynamical system \eqref{sys} might be equivalently written as
\begin{equation}\label{systy}
\left\{\begin{array}{l}
\dot x_{1}=\{x_1,H\}_{\nu;C_1,\dots,C_{n-2}}\\
\dot x_{2}=\{x_2,H\}_{\nu;C_1,\dots,C_{n-2}}\\
\cdots \\
\dot x_{n}=\{x_n,H\}_{\nu;C_1,\dots,C_{n-2}}.\\  
\end{array}\right.
\end{equation}

Otherwise stated, using the definition of the Poisson bracket, the restriction to $\Omega$ of the components $X_i$ of the vector field $X$ (which generates the system \eqref{sys}), admit the formulation
$$X_i=\nu \cdot \dfrac{\partial(C_1,\dots,C_{n-2},x_i,H)}{\partial(x_1,\dots,x_n)},$$
for every $i\in\{1,\dots,n\}$.

So far we have seen some dynamical implications induced by the Poisson structure $\Pi_{\nu;C_1,\dots,C_{n-2}}$. Next, we shall analyze some of the main geometrical features of the ambient space $\Omega$, induced by the existence of the Poisson structure $\Pi_{\nu;C_1,\dots,C_{n-2}}$. First of all, recall that the expression of the Poisson tensor relative to a local coordinates system, $(x_1,...,x_n)$, is given by the bi-vector field
$$
\Pi_{\nu;C_1,\dots,C_{n-2}} = \sum_{1\leq k < \ell \leq n}\Pi_{\nu;C_1,\dots,C_{n-2}}^{k \ell}(x_1, \ldots, x_n)
\frac{\partial}{\partial x_k} \wedge \frac{\partial}{\partial
x_\ell}\,,
$$
where $\Pi_{\nu;C_1,\dots,C_{n-2}}^{k \ell}(x_1, \ldots, x_n):=\{x_k,x_\ell\}_{\nu;C_1,\dots,C_{n-2}}$.

If there is no danger of confusion, the skew-symmetric matrix $$\Pi_{\nu;C_1,\dots,C_{n-2}}:=[ \Pi_{\nu;C_1,\dots,C_{n-2}}^{k \ell}(x_1, \ldots, x_n)]_{1\leq k, \ell \leq n}$$ will also be called Poisson structure.

Consequently, using the local matrix expression of the Poisson tensor, the local expression of the Poisson bracket $\{f,g\}_{\nu;C_1,\dots,C_{n-2}}$ of two arbitrary smooth functions $f,g\in \mathcal{C}^{\infty}(\Omega,\mathbb{R})$, becomes,
\begin{equation}\label{pbloc}
\{f,g\}_{\nu;C_1,\dots,C_{n-2}} =(\nabla f)^{\top}\Pi_{\nu;C_1,\dots,C_{n-2}}\nabla g.
\end{equation}

The existence of the Poisson structure $\Pi_{\nu;C_1,\dots,C_{n-2}}$ induces a partition of $\Omega$ in two subsets, i.e., the set of regular points and its complement, the set of singular points. More precisely, a point $\overline{x}_0 \in\Omega$ is called \textit{regular point} of the Poisson structure $\Pi_{\nu;C_1,\dots,C_{n-2}}$ if there exists $U\subseteq\Omega$, an open neighborhood of $\overline{x}_0$, such that the rank of $\Pi_{\nu;C_1,\dots,C_{n-2}}$ is constant for every $\overline{x}\in U$, i.e., the rank of the the linear map $$\left(\Pi_{\nu;C_1,\dots,C_{n-2}}^{\sharp}\right)_{\overline{x}}:T_{\overline{x}}^{*}\Omega\to T_{\overline{x}}\Omega$$ is constant for every $\overline{x}\in U$, or equivalently, $$\operatorname{rank} [\Pi_{\nu;C_1,\dots,C_{n-2}}^{k \ell}(\overline{x})]_{1\leq k, \ell \leq n}=\operatorname{rank} [\Pi_{\nu;C_1,\dots,C_{n-2}}^{k \ell}(\overline{x}_0)]_{1\leq k, \ell \leq n},$$ for every $\overline{x}\in U$. The set of regular points of $\Omega$ is denoted by $\Omega_{reg}$. Due to skew-symmetry of the Poisson structure, the rank of every point is an even number. Moreover, the lower semi-continuity of the rank function, (i.e., for each point $\overline{x}_0 \in\Omega$ there exists an open neighborhood $U\subseteq \Omega$ of $\overline{x}_0$ such that $\operatorname{rank}\Pi_{\nu;C_1,\dots,C_{n-2}}(\overline{x})\geq \operatorname{rank}\Pi_{\nu;C_1,\dots,C_{n-2}}(\overline{x}_0)$ for all $\overline{x}\in U$) implies that the set of regular points, $\Omega_{reg}$, is an open dense subset of $\Omega$, and $\Omega_{sing}:=\Omega\setminus\Omega_{reg}$, the set of \textit{singular} points, is a closed nowhere dense subset of $\Omega$.   
\begin{remark}\label{regpts}
From the definition of the Poisson bracket it follows that for each $\overline{x}\in\Omega$, $\operatorname{rank}\Pi_{\nu;C_1,\dots,C_{n-2}}(\overline{x})\in\{0,2\}$. More precisely, $\operatorname{rank}\Pi_{\nu;C_1,\dots,C_{n-2}}(\overline{x})=0$ either if $\overline{x}\in\Omega$ such that $\nu(\overline{x})=0$, or if the vectors $\nabla C_1 (\overline{x}),\dots,\nabla C_{n-2}(\overline{x})$ are linearly dependent (i.e., $\nabla C_1 (\overline{x})\wedge \dots \wedge \nabla C_{n-2}(\overline{x})=0$), and $\operatorname{rank}\Pi_{\nu;C_1,\dots,C_{n-2}}(\overline{x})=2$, otherwise.   
\end{remark}

Let us provide now a characterization of the regular points of $\Omega$ with respect to the Poisson structure $\Pi_{\nu;C_1,\dots,C_{n-2}}$. In order to do that, recall from the definition of the completely integrable system \eqref{sys} that the gradient vector fields, $\nabla C_1, \dots, \nabla C_{n-2}$, are pointwise linearly independent almost everywhere (with respect to the $n-$dimensional Lebesgue measure). Since the rescaling function $\nu$ is supposed to be a generic smooth function, it will be nonzero almost everywhere. Consequently, from Remark \eqref{regpts} we get that $\overline{x}_0 \in\Omega_{reg}$ if and only if $\operatorname{rank}\Pi_{\nu;C_1,\dots,C_{n-2}}(\overline{x}_0)=\max_{\{\overline{x}\in\Omega\}}\operatorname{rank}\Pi_{\nu;C_1,\dots,C_{n-2}}(\overline{x})$. Recall that the set of maximal rank points is an open set which generally need not be dense (e.g., if $\nu$ is a smooth function such that $\nu^{-1}(\{0\})$ contains a proper open subset of $\Omega$). In general, the set of maximal rank points is only included in the set of regular points. 

Hence, we obtain the following characterization of a regular point.
\begin{remark}\label{regulpts}
A point $\overline{x}_0 \in\Omega$ is regular if and only if $\operatorname{rank}\Pi_{\nu;C_1,\dots,C_{n-2}}(\overline{x}_0)=2$, or equivalently, $\overline{x}_0 \in\Omega_{reg}$ if and only if $\nu(\overline{x}_0)\neq 0$ and $\nabla C_1 (\overline{x}_0)\wedge \dots \wedge \nabla C_{n-2}(\overline{x}_0)\neq 0$.
\end{remark}

In the following we briefly present some of the main geometrical properties of the Poisson manifold $(\Omega,\{\cdot,\cdot\}_{\nu;C_1,\dots,C_{n-2}})$. In order to do that, let us recall first the definition of a Casimir function. More exactly, a smooth function $C\in\mathcal{C}^{\infty}(\Omega,\mathbb{R})$ which verifies that $\{f,C\}_{\nu;C_1,\dots,C_{n-2}}=0$, for every $f\in\mathcal{C}^{\infty}(\Omega,\mathbb{R})$, is called a \textit{Casimir function} of the Poisson structure $\Pi_{\nu;C_1,\dots,C_{n-2}}$. Note that $C_1,\dots,C_{n-2}$, form a \textit{complete set} of Casimir functions of the Poisson bracket $\{\cdot,\cdot\}_{\nu;C_1,\dots,C_{n-2}}$, i.e., each Casimir function of the Poisson structure $\Pi_{\nu;C_1,\dots,C_{n-2}}$, might be written as a functional combination of $C_1,\dots,C_{n-2}$.

The relation \eqref{pbloc} implies that a smooth function $C\in\mathcal{C}^{\infty}(\Omega,\mathbb{R})$ is a Casimir function if and only if 
\begin{equation}\label{csmw}
\nabla C(\overline{x})\in\ker \Pi_{\nu;C_1,\dots,C_{n-2}}(\overline{x}), ~(\forall)\overline{x}\in\Omega.
\end{equation}

Let us consider $\overline{x}_0 \in \Omega_{reg}$. Hence, the vectors $\nabla C_1 (\overline{x}_0),\dots,\nabla C_{n-2}(\overline{x}_0)$ are linearly independent, and consequently  $$\dim_{\mathbb{R}}(\operatorname{span}_{\mathbb{R}}\{\nabla C_{1}(\overline{x}_0),\dots, \nabla C_{n-2}(\overline{x}_0)\})=n-2.$$ 

Since $\overline{x}_0$ is a regular point, it follows that $\operatorname{rank}\Pi_{\nu;C_1,\dots,C_{n-2}}(\overline{x}_0)=2$, and hence one obtains that
\begin{align*}
\begin{split}
\dim_{\mathbb{R}}(\ker\Pi_{\nu;C_1,\dots,C_{n-2}}(\overline{x}_0))&=n-\dim_{\mathbb{R}}(\operatorname{Im}\Pi_{\nu;C_1,\dots,C_{n-2}}(\overline{x}_0))=n-\operatorname{rank}\Pi_{\nu;C_1,\dots,C_{n-2}}(\overline{x}_0)\\
&=n-2.
\end{split}
\end{align*}

Using the relation \eqref{csmw} associated to $C_1,\dots,C_{n-2}$, the following inclusion of vector subspaces holds true for every $\overline{x}\in\Omega$:
\begin{equation}\label{inclus}
\operatorname{span}_{\mathbb{R}}\{\nabla C_{1}(\overline{x}),\dots, \nabla C_{n-2}(\overline{x})\}\subseteq \ker\Pi_{\nu;C_1,\dots,C_{n-2}}(\overline{x}).
\end{equation}

Consequently, due to the equality of dimensions, the inclusion \eqref{inclus} becomes an equality for $\overline{x}=\overline{x}_0$. 

Moreover, we obtained another characterization of regular points. More precisely, a point $\overline{x}\in\Omega$ is a regular point if and only if
\begin{equation}\label{inclus1}
\operatorname{span}_{\mathbb{R}}\{\nabla C_{1}(\overline{x}),\dots, \nabla C_{n-2}(\overline{x})\}= \ker\Pi_{\nu;C_1,\dots,C_{n-2}}(\overline{x}).
\end{equation}

So far we pointed out some geometrical properties of the kernel of the linear maps $\left(\Pi_{\nu;C_1,\dots,C_{n-2}}^{\sharp}\right)_{\overline{x}}:T_{\overline{x}}^{*}\Omega\to T_{\overline{x}}\Omega$, for $\overline{x}\in\Omega$. In the following we shall analyze the geometrical properties of the image of these maps. In order to do that, note that for each $\overline{x}\in\Omega$, the image of the linear map $\left(\Pi_{\nu;C_1,\dots,C_{n-2}}^{\sharp}\right)_{\overline{x}}:T_{\overline{x}}^{*}\Omega\to T_{\overline{x}}\Omega$ is a vector subspace $S_{\overline{x}}\subseteq T_{\overline{x}}\Omega \cong \mathbb{R}^n$ of dimension equal to $\operatorname{rank}\Pi_{\nu;C_1,\dots,C_{n-2}}(\overline{x})$. The collection of these vector subspaces, for $\overline{x}\in\Omega$, forms a smooth generalized distribution, called the \textit{characteristic distribution} of the Poisson structure $\Pi_{\nu;C_1,\dots,C_{n-2}}$. Since the rank of the characteristic distribution at $\overline{x}\in\Omega$, coincides with $\operatorname{rank}\Pi_{\nu;C_1,\dots,C_{n-2}}(\overline{x})$, from the Remark \eqref{regpts} one obtains that the rank of the distribution at regular points is two, while the rank at the singular points is zero. The \textit{symplectic stratification theorem} states that the characteristic distribution is integrable, and the leaves of the induced foliation are symplectic manifolds (i.e., manifolds which admit some smooth, nondegenerate, closed $2-$form). If one denotes by $\Sigma_{\overline{x}}\subset \Omega$ the leaf through the point $\overline{x}\in\Omega$, then the restriction of the Poisson bracket $\{\cdot,\cdot\}_{\nu;C_1,\dots,C_{n-2}}$ to $\Sigma_{\overline{x}}$, induces on $\Sigma_{\overline{x}}$ a symplectic form, $\omega_{\Sigma_{\overline{x}}}$, defined for each pair of vectors $v_{\overline{x}},w_{\overline{x}}\in T_{\overline{x}}\Sigma_{\overline{x}}=S_{\overline{x}}$, by the formula
$$
\omega_{\Sigma_{\overline{x}}}(\overline{x})(v_{\overline{x}},w_{\overline{x}}):=\Pi_{\nu;C_1,\dots,C_{n-2}}(\overline{x})(\alpha_{\overline{x}},\beta_{\overline{x}}),
$$
where $\alpha_{\overline{x}},\beta_{\overline{x}}\in T_{\overline{x}}^{*}\Omega$ are covectors corresponding to $v_{\overline{x}},w_{\overline{x}}\in T_{\overline{x}}\Sigma_{\overline{x}}=S_{\overline{x}}$, through the linear map $\left(\Pi_{\nu;C_1,\dots,C_{n-2}}^{\sharp}\right)_{\overline{x}}$. Note that the $2-$form $\omega_{\Sigma_{\overline{x}}}$ is closed, since the Poisson bracket $\{\cdot,\cdot\}_{\nu;C_1,\dots,C_{n-2}}$ verifies the Jacobi identity. As a set, the symplectic leaf $\Sigma_{\overline{x}}$ is given by those points of $\Omega$ which can be joined with $\overline{x}$ by a piecewise smooth path, consisting of smooth pieces of integral curves of Hamiltonian vector fields.

Concerning the partition of $\Omega$ induced by the Poisson structure $\Pi_{\nu;C_1,\dots,C_{n-2}}$, recall that both the set of regular points, as well as the set of singular points of the Poisson manifold $(\Omega,\{\cdot,\cdot\}_{\nu;C_1,\dots,C_{n-2}})$, are \textit{saturated} subsets of $\Omega$, i.e., they are both some union of symplectic leaves. Since each symplectic leaf is a connected set, it follows that if $\overline{x}$ is a regular point, then so are all points in the corresponding sympletic leaf, $\Sigma_{\overline{x}}$, i.e., if $\overline{x}\in\Omega_{reg}$, then $\Sigma_{\overline{x}}\subseteq\Omega_{reg}$. The leaves through regular points are called \textit{regular leaves}, while the rest of them are called \textit{singular leaves}. Note that each regular leaf of the Poisson manifold $(\Omega,\{\cdot,\cdot\}_{\nu;C_1,\dots,C_{n-2}})$ is two-dimensional, while each singular leaf of $(\Omega,\{\cdot,\cdot\}_{\nu;C_1,\dots,C_{n-2}})$ is zero-dimensional. 

More precisely, if one denotes by $C:=(C_1,\dots,C_{n-2}): \Omega \longrightarrow \mathbb{R}^{n-2}$ the map generated by the Casimir functions $C_1, \dots, C_{n-2}$, then the regular leaves of the symplectic foliation of the Poisson manifold  $(\Omega,\{\cdot,\cdot\}_{\nu;C_1,\dots,C_{n-2}})$ are the connected components of the two-dimensional manifolds given by $C^{-1}(\{(c_1,\dots,c_{n-2})\})\setminus Z(\nu)$, if $(c_1,\dots,c_{n-2})$ is a regular value of $C$, or given by $C^{-1}(\{(c_1,\dots,c_{n-2})\})\setminus\{Z(\nu)\cup \operatorname{Crit}(C)\}$, if $(c_1,\dots,c_{n-2})$ is a critical value of $C$, where $\operatorname{Crit}(C)\subset \Omega$ stands for the set of critical points of $C$, and $Z(\nu)$ is the set of zeros of the generic smooth function $\nu$. Moreover, the singular leaves are the zero-dimensional manifolds consisting each of some single point of the set $Z(\nu)\cup \operatorname{Crit}(C)$.

\section{The set of equilibrium states of a completely integrable system}

The aim of this section is to characterize the set of equilibrium states of a general completely integrable system, and also to analyze the geometric and analytic properties of certain subsets of equilibria, naturally associated with the Poisson geometry of the Hamiltonian realizations of the system.

In order to do that, let us recall first the relation \eqref{sywedge}, which provides a local expression of the vector field $X\in\mathfrak{X}(U)$ associated to the completely integrable system \eqref{sys}, i.e.,
\begin{equation}\label{frmty}
X = (-\nu) \star(\nabla C_1 \wedge \dots \wedge\nabla C_{n-1}),
\end{equation}
where $\nu\in\mathcal{C}^{\infty}(\Omega,\R)$ is a rescaling function. Next result gives a characterization of the \textit{equilibrium states} of the vector field $X$, i.e., the solutions of the equation $X(\overline{x})=0$, $\overline{x}\in\Omega$.

\begin{proposition}\label{equilibria} 
The equilibrium states of the integrable system \eqref{sys} are the elements of the set 
$$
\mathcal{E}^{X}:=\{\overline{x} \in \Omega \ | \ \nu (\overline{x})=0\}\cup\{\overline{x} \in \Omega \ | \ \nabla C_1 (\overline{x})\wedge \dots \wedge \nabla C_{n-1} (\overline{x}) =0 \}.
$$
\end{proposition}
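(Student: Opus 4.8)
The plan is to read the equilibria off directly from the wedge-product realization \eqref{frmty} of $X$, using the single fact that the Hodge star operator $\star\colon\Lambda^{n-1}(\R^n)\to\Lambda^{1}(\R^n)\cong\R^n$ is, at each point, a linear isomorphism.

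First I would fix $\overline{x}\in\Omega$ and evaluate \eqref{frmty} at $\overline{x}$, which gives
$$
X(\overline{x})=-\nu(\overline{x})\cdot\star\bigl(\nabla C_1(\overline{x})\wedge\dots\wedge\nabla C_{n-1}(\overline{x})\bigr).
$$
Since $\star$ is a pointwise linear isomorphism, the covector $\star\bigl(\nabla C_1(\overline{x})\wedge\dots\wedge\nabla C_{n-1}(\overline{x})\bigr)$ vanishes in $\R^n$ if and only if $\nabla C_1(\overline{x})\wedge\dots\wedge\nabla C_{n-1}(\overline{x})=0$ in $\Lambda^{n-1}(\R^n)$. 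Therefore the scalar-times-vector product on the right-hand side is zero precisely when $\nu(\overline{x})=0$ or $\nabla C_1(\overline{x})\wedge\dots\wedge\nabla C_{n-1}(\overline{x})=0$. Equivalently, $X(\overline{x})=0$ if and only if $\overline{x}$ lies in the union of the two sets defining $\mathcal{E}^{X}$, which is exactly the claim. I would additionally note, for later use, that $\nabla C_1(\overline{x})\wedge\dots\wedge\nabla C_{n-1}(\overline{x})=0$ is equivalent to the linear dependence of the covectors $\nabla C_1(\overline{x}),\dots,\nabla C_{n-1}(\overline{x})$, matching the way this condition reappears in the discussion of regular points.

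There is no real obstacle in this argument; it is a one-line consequence of \eqref{frmty} and the invertibility of $\star$. The only subtlety worth a sentence is the domain issue flagged just before \eqref{sywedge}: the identity \eqref{frmty} is a priori valid only on the subset of the original open set on which the proportionality rescaling function is defined, but by the convention adopted there this subset is itself what is being denoted $\Omega$, so the characterization of $\mathcal{E}^{X}$ as a subset of $\Omega$ is internally consistent.
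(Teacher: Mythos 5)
Your argument is correct and follows essentially the same route as the paper: both read the equilibrium set directly off the factored expression \eqref{frmty}, the only cosmetic difference being that you invoke the pointwise invertibility of the Hodge star $\star\colon\Lambda^{n-1}(\mathbb{R}^n)\to\Lambda^{1}(\mathbb{R}^n)$, while the paper phrases the same fact as the isometry $\|\star(\nabla C_1(\overline{x})\wedge\dots\wedge\nabla C_{n-1}(\overline{x}))\|=\|\nabla C_1(\overline{x})\wedge\dots\wedge\nabla C_{n-1}(\overline{x})\|_{n-1}$ and deduces the vanishing from the norm. Your remark about the domain convention for $\Omega$ is consistent with the paper's setup and does not affect the proof.
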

\begin{proof}
Using the expression \eqref{frmty}, we obtain that $\overline{x}\in\Omega$ is an equilibrium state of the completely integrable system \eqref{sys} if and only if $\|X(\overline{x})\|=0$. The conclusion follows taking into account that $\|X(\overline{x})\|=|\nu (\overline{x})|\cdot\|\star(\nabla C_1 (\overline{x})\wedge \dots \wedge \nabla C_{n-1} (\overline{x}))\|$, and
$$
\|\star(\nabla C_1 (\overline{x})\wedge \dots \wedge \nabla C_{n-1} (\overline{x}))\|=\|\nabla C_1 (\overline{x})\wedge \dots \wedge \nabla C_{n-1} (\overline{x})\|_{n-1},
$$ 
where $\|\cdot\|_{n-1}:=\sqrt{\langle\cdot,\cdot\rangle_{n-1}}$ stands for the $(n-1)-$volume of decomposable $(n-1)-$vectors, and $\langle\cdot,\cdot\rangle_{n-1}$ denotes the inner product defined on arbitrary pairs of decomposable $(n-1)-$vectors, $u_1\wedge\dots\wedge u_{n-1}, v_1\wedge\dots\wedge v_{n-1}\in\Lambda^{n-1}\mathbb{R}^{n}$, by 
$$
\langle u_1\wedge\dots\wedge u_{n-1}, v_1\wedge\dots\wedge v_{n-1}\rangle _{n-1}:=\det([\langle u_i,v_j\rangle]_{1\leq i,j \leq {n-1}}).
$$
\end{proof}

In the sequel we shall analyze the local dynamical behavior of the completely integrable system \eqref{systy} around equilibrium states for which $\nu (\overline{x})\neq 0$, and moreover, there exists an $i\in\{1,\dots, n-1 \}$ such that 
$$
\nabla C_1 (\overline{x})\wedge \dots \wedge \nabla C_{i-1} (\overline{x})\wedge \widehat{\nabla C_{i} (\overline{x})}\wedge \nabla C_{i+1} (\overline{x})\wedge\dots\wedge\nabla C_{n-1} (\overline{x}) \neq 0,
$$
where " $\widehat{\cdot}$ " means that the indicated element is omitted. 

By eventually relabeling the first integrals $C_1,\dots, C_{n-1}$, we suppose that the above equilibrium states are elements of the set 
\begin{equation}\label{ndgeq}
\mathcal{E}^{C_{n-1}}_{C_1 ,\dots, C_{n-2}}:=\{\overline{x} \in \Omega \ | \ \nu (\overline{x})\neq 0, \ \nabla C_1 (\overline{x})\wedge \dots \wedge \nabla C_{n-2} (\overline{x}) \neq 0, \ \nabla C_1 (\overline{x})\wedge \dots \wedge \nabla C_{n-1} (\overline{x}) =0\}.
\end{equation} 

In order to give a geometric description of the set $\mathcal{E}^{C_{n-1}}_{C_1 ,\dots, C_{n-2}}$, let us recall from \eqref{systy} that the completely integrable system \eqref{sys} admits the Hamiltonian realization $$(\Omega,\{\cdot,\cdot\}_{\nu;C_1,\dots,C_{n-2}},H=C_{n-1})$$ modeled on the Poisson manifold $(\Omega,\{\cdot,\cdot\}_{\nu;C_1,\dots,C_{n-2}})$. Consequently, using the characterization of regular points of $(\Omega,\{\cdot,\cdot\}_{\nu;C_1,\dots,C_{n-2}})$ given in Remark \eqref{regulpts}, we obtain the following description of the regular equilibrium states of the integrable system \eqref{sys}, viewed as the Hamiltonian system \eqref{systy}.
\begin{proposition}\label{regeq} 
The set of regular equilibrium states of the completely integrable system \eqref{sys}, realized as the Hamiltonian system $(\Omega,\{\cdot,\cdot\}_{\nu;C_1,\dots,C_{n-2}},H=C_{n-1})$ modeled on the Poisson manifold $(\Omega,\{\cdot,\cdot\}_{\nu;C_1,\dots,C_{n-2}})$, is given by
\begin{equation*}
\mathcal{E}^{C_{n-1}}_{C_1 ,\dots, C_{n-2}}=\{\overline{x} \in \Omega \ | \ \nu (\overline{x})\neq 0, \ \nabla C_1 (\overline{x})\wedge \dots \wedge \nabla C_{n-2} (\overline{x}) \neq 0, \ \nabla C_1 (\overline{x})\wedge \dots \wedge \nabla C_{n-1} (\overline{x}) =0\}.
\end{equation*} 
\end{proposition}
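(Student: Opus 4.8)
The plan is to obtain the asserted description of the regular equilibrium states simply by intersecting the set $\mathcal{E}^{X}$ of \emph{all} equilibrium states of \eqref{sys}, computed in Proposition \ref{equilibria}, with the set $\Omega_{reg}$ of regular points of the Poisson manifold $(\Omega,\{\cdot,\cdot\}_{\nu;C_1,\dots,C_{n-2}})$, characterized in Remark \ref{regulpts}. Indeed, a regular equilibrium state of the Hamiltonian system \eqref{systy} is by definition an element of $\mathcal{E}^{X}$ which is at the same time a regular point of the Poisson structure $\Pi_{\nu;C_1,\dots,C_{n-2}}$, so the set we must describe is exactly $\mathcal{E}^{X}\cap\Omega_{reg}$, and the proof reduces to a set-theoretic manipulation of the two descriptions already available.

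First I would recall from Proposition \ref{equilibria} that $\mathcal{E}^{X}=\{\overline{x}\in\Omega\mid\nu(\overline{x})=0\}\cup\{\overline{x}\in\Omega\mid\nabla C_1(\overline{x})\wedge\dots\wedge\nabla C_{n-1}(\overline{x})=0\}$, and from Remark \ref{regulpts} that $\overline{x}\in\Omega_{reg}$ if and only if $\nu(\overline{x})\neq 0$ and $\nabla C_1(\overline{x})\wedge\dots\wedge\nabla C_{n-2}(\overline{x})\neq 0$. For the inclusion $\mathcal{E}^{X}\cap\Omega_{reg}\subseteq\mathcal{E}^{C_{n-1}}_{C_1,\dots,C_{n-2}}$, let $\overline{x}$ be a regular equilibrium state: since $\overline{x}\in\Omega_{reg}$ we have $\nu(\overline{x})\neq 0$ and $\nabla C_1(\overline{x})\wedge\dots\wedge\nabla C_{n-2}(\overline{x})\neq 0$, and in particular $\overline{x}$ cannot lie in the first component $\{\nu=0\}$ of $\mathcal{E}^{X}$, so it must lie in the second one, i.e. $\nabla C_1(\overline{x})\wedge\dots\wedge\nabla C_{n-1}(\overline{x})=0$; these three conditions are precisely membership in $\mathcal{E}^{C_{n-1}}_{C_1,\dots,C_{n-2}}$. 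Conversely, if $\overline{x}\in\mathcal{E}^{C_{n-1}}_{C_1,\dots,C_{n-2}}$, then $\nu(\overline{x})\neq 0$ and $\nabla C_1(\overline{x})\wedge\dots\wedge\nabla C_{n-2}(\overline{x})\neq 0$ put $\overline{x}\in\Omega_{reg}$ by Remark \ref{regulpts}, while $\nabla C_1(\overline{x})\wedge\dots\wedge\nabla C_{n-1}(\overline{x})=0$ puts $\overline{x}\in\mathcal{E}^{X}$ by Proposition \ref{equilibria}, so $\overline{x}$ is a regular equilibrium state. Hence $\mathcal{E}^{X}\cap\Omega_{reg}=\mathcal{E}^{C_{n-1}}_{C_1,\dots,C_{n-2}}$, which is the claim.

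Since both inclusions are immediate from the cited statements, there is no real obstacle here; the only point deserving attention is terminological, namely that ``regular'' refers to regularity of the point with respect to the Poisson structure $\Pi_{\nu;C_1,\dots,C_{n-2}}$ (equivalently $\operatorname{rank}\Pi_{\nu;C_1,\dots,C_{n-2}}(\overline{x})=2$), and not to $(c_1,\dots,c_{n-2})$ being a regular value of the Casimir map $C=(C_1,\dots,C_{n-2})$. As a consistency check one may also note that, at such a point, $\nabla C_1(\overline{x})\wedge\dots\wedge\nabla C_{n-2}(\overline{x})\neq 0$ together with $\nabla C_1(\overline{x})\wedge\dots\wedge\nabla C_{n-1}(\overline{x})=0$ exactly says that $\nabla C_{n-1}(\overline{x})$ lies in $\operatorname{span}_{\mathbb{R}}\{\nabla C_1(\overline{x}),\dots,\nabla C_{n-2}(\overline{x})\}$, which by \eqref{inclus1} equals $\ker\Pi_{\nu;C_1,\dots,C_{n-2}}(\overline{x})$; thus $\overline{x}$ is an equilibrium precisely when $H=C_{n-1}$ is critical along the (two-dimensional) symplectic leaf through $\overline{x}$, the expected Hamiltonian characterization of an equilibrium on a regular leaf.
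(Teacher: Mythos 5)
Your argument is correct and is essentially the same as the paper's: both identify the set of regular equilibrium states with $\mathcal{E}^{X}\cap\Omega_{reg}$ and then combine Proposition \ref{equilibria} with the characterization of $\Omega_{reg}$ in Remark \ref{regulpts}. Your version merely spells out the two inclusions and adds a (correct) consistency remark that the paper leaves implicit.
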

\begin{proof}
Recall from Proposition \eqref{equilibria} that the set of equilibrium states of the integrable system \eqref{sys} is given by
$$
\mathcal{E}^{X}=\{\overline{x} \in \Omega \ | \ \nu (\overline{x})=0\}\cup\{\overline{x} \in \Omega \ | \ \nabla C_1 (\overline{x})\wedge \dots \wedge \nabla C_{n-1} (\overline{x}) =0 \}.
$$
Recall also from the Remark \eqref{regulpts} that a point $\overline{x} \in\Omega$ is regular with respect to the Poisson structure $\Pi_{\nu;C_1,\dots,C_{n-2}}$ if and only if $\nu(\overline{x})\neq 0$ and $\nabla C_1 (\overline{x})\wedge \dots \wedge \nabla C_{n-2}(\overline{x})\neq 0$. Consequently, we obtain that
$$
\mathcal{E}^{X} \cap \Omega_{reg}=\mathcal{E}^{C_{n-1}}_{C_1 ,\dots, C_{n-2}},
$$
and hence we get the conclusion.
\end{proof}

Let us fix now $\overline{x}_e \in \mathcal{E}^{C_{n-1}}_{C_1 ,\dots, C_{n-2}}$, a regular equilibrium state of the dynamical system \eqref{sys}, realized as the Hamiltonian system $(\Omega,\{\cdot,\cdot\}_{\nu;C_1,\dots,C_{n-2}},H=C_{n-1})$ . Then, using the characterization of regular equilibrium states, given in Proposition \eqref{regeq}, there exists $\overrightarrow{\lambda_{e}}:=(\lambda^{e}_1,\dots,\lambda^{e}_{n-2})\in\mathbb{R}^{n-2}$ such that $$\nabla C_{n-1}(\overline{x}_e)+\lambda^{e}_1 \nabla C_1 (\overline{x}_e)+\dots +\lambda^{e}_{n-2} \nabla C_{n-2} (\overline{x}_e)=0,$$ or equivalently, $\nabla F_{\overrightarrow{\lambda_{e}}}(\overline{x}_e)=0$, where $F_{\overrightarrow{\lambda_{e}}}:\Omega\rightarrow \mathbb{R}$ is the smooth function given by $F_{\overrightarrow{\lambda_{e}}}:=C_{n-1}+\lambda^{e}_1 C_{1} +\dots +\lambda^{e}_{n-2} C_{n-2}$.

\begin{definition}\label{ndgequil}
A regular equilibrium state $\overline{x}_e \in \mathcal{E}^{C_{n-1}}_{C_1 ,\dots, C_{n-2}}$ will be called \textbf{non-degenerate} if it is a non-degenerate critical point of $F_{\overrightarrow{\lambda_{e}}}$, i.e., $\nabla F_{\overrightarrow{\lambda_{e}}}(\overline{x}_e)=0$, and   $\det{(\operatorname{Hess}F_{\overrightarrow{\lambda_{e}}}(\overline{x}_e))}\neq 0$, where the linear map $\operatorname{Hess}F_{\overrightarrow{\lambda_{e}}}(\overline{x}_e) : \mathbb{R}^{n} \longrightarrow \mathbb{R}^{n}$, given by  $$\operatorname{Hess}F_{\overrightarrow{\lambda_{e}}}(\overline{x}_e)\cdot v := \mathrm{D}(\nabla F_{\overrightarrow{\lambda_{e}}})(\overline{x}_e)\cdot v, ~ (\forall)v\in \mathbb{R}^{n},$$ stands for the canonical Hessian operator on $(\mathbb{R}^{n},<\cdot,\cdot>)$.
\end{definition}

Next result provides a local property of the set of non-degenerate regular equilibrium points of the completely integrable system \eqref{sys}, viewed as the Hamiltonian dynamical system \eqref{systy}.
\begin{theorem}\label{ift}
Let $\overline{x}_e \in \mathcal{E}^{C_{n-1}}_{C_1 ,\dots, C_{n-2}}$ be a non-degenerate regular equilibrium state of the completely integrable system \eqref{systy}. Then there exist $V\subseteq \mathbb{R}^{n-2}$, an open neighborhood of $\overrightarrow{\lambda_{e}}$, $U\subseteq \Omega$, an open neighborhood of $\overline{x}_e$, and a smooth function $\overline{x}: V\rightarrow U$ such that $\overline{x}(\overrightarrow{\lambda_{e}})=\overline{x}_e$, and moreover, for each $\overrightarrow{\lambda}\in V$, $\overline{x}(\overrightarrow{\lambda}) \in \mathcal{E}^{C_{n-1}}_{C_1 ,\dots, C_{n-2}} \cap U$ is a non-degenerate regular equilibrium state of the integrable system \eqref{systy}.
\end{theorem}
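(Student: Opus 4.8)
The plan is to apply the Implicit Function Theorem to the equation $\nabla F_{\overrightarrow{\lambda}}=0$ that cuts out the regular equilibria, and then to shrink the neighborhoods so as to guarantee that the resulting branch of critical points stays inside the set of non-degenerate regular equilibria.

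First I would introduce the smooth map $G:\Omega\times\mathbb{R}^{n-2}\rightarrow\mathbb{R}^n$,
$$
G(\overline{x},\overrightarrow{\lambda}):=\nabla F_{\overrightarrow{\lambda}}(\overline{x})=\nabla C_{n-1}(\overline{x})+\lambda_1\nabla C_1(\overline{x})+\dots+\lambda_{n-2}\nabla C_{n-2}(\overline{x}),
$$
where $\overrightarrow{\lambda}=(\lambda_1,\dots,\lambda_{n-2})$. By the discussion preceding Definition \ref{ndgequil} one has $G(\overline{x}_e,\overrightarrow{\lambda_e})=0$, and the partial differential with respect to $\overline{x}$ at that point is $\mathrm{D}(\nabla F_{\overrightarrow{\lambda_e}})(\overline{x}_e)=\operatorname{Hess}F_{\overrightarrow{\lambda_e}}(\overline{x}_e)$, which is invertible precisely because $\overline{x}_e$ is non-degenerate. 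The Implicit Function Theorem then produces open neighborhoods $V$ of $\overrightarrow{\lambda_e}$ and $U$ of $\overline{x}_e$, together with a smooth map $\overline{x}:V\rightarrow U$ such that $\overline{x}(\overrightarrow{\lambda_e})=\overline{x}_e$ and $\nabla F_{\overrightarrow{\lambda}}(\overline{x}(\overrightarrow{\lambda}))=0$ for all $\overrightarrow{\lambda}\in V$.

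Next I would verify that each $\overline{x}(\overrightarrow{\lambda})$ belongs to $\mathcal{E}^{C_{n-1}}_{C_1,\dots,C_{n-2}}$. The relation $\nabla F_{\overrightarrow{\lambda}}(\overline{x}(\overrightarrow{\lambda}))=0$ expresses $\nabla C_{n-1}(\overline{x}(\overrightarrow{\lambda}))$ as a linear combination of $\nabla C_1(\overline{x}(\overrightarrow{\lambda})),\dots,\nabla C_{n-2}(\overline{x}(\overrightarrow{\lambda}))$, hence $\nabla C_1\wedge\dots\wedge\nabla C_{n-1}$ vanishes at $\overline{x}(\overrightarrow{\lambda})$. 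Since $\nu(\overline{x}_e)\neq 0$ and $\nabla C_1(\overline{x}_e)\wedge\dots\wedge\nabla C_{n-2}(\overline{x}_e)\neq 0$, and both $\nu$ and $\overline{x}\mapsto\nabla C_1(\overline{x})\wedge\dots\wedge\nabla C_{n-2}(\overline{x})$ are continuous, after shrinking $U$ (and, via the IFT, $V$ accordingly) we may assume $\nu\neq 0$ and $\nabla C_1\wedge\dots\wedge\nabla C_{n-2}\neq 0$ throughout $U$; thus $\overline{x}(\overrightarrow{\lambda})\in\mathcal{E}^{C_{n-1}}_{C_1,\dots,C_{n-2}}\cap U$ for every $\overrightarrow{\lambda}\in V$.

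Finally, for non-degeneracy, note that since $\nabla C_1(\overline{x}(\overrightarrow{\lambda})),\dots,\nabla C_{n-2}(\overline{x}(\overrightarrow{\lambda}))$ are linearly independent, the tuple $\overrightarrow{\lambda}$ is the \emph{unique} one writing $\nabla C_{n-1}(\overline{x}(\overrightarrow{\lambda}))=-\sum_i\lambda_i\nabla C_i(\overline{x}(\overrightarrow{\lambda}))$; hence the multiplier associated (in the sense of the paragraph before Definition \ref{ndgequil}) to the regular equilibrium $\overline{x}(\overrightarrow{\lambda})$ is $\overrightarrow{\lambda}$ itself, so the function whose Hessian must be tested is $F_{\overrightarrow{\lambda}}$. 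The assignment $(\overline{x},\overrightarrow{\lambda})\mapsto\det\!\left(\operatorname{Hess}F_{\overrightarrow{\lambda}}(\overline{x})\right)$ is continuous and non-zero at $(\overline{x}_e,\overrightarrow{\lambda_e})$, hence non-zero on a neighborhood of that point; composing with the continuous map $\overrightarrow{\lambda}\mapsto(\overline{x}(\overrightarrow{\lambda}),\overrightarrow{\lambda})$ and shrinking $V$ once more gives $\det\!\left(\operatorname{Hess}F_{\overrightarrow{\lambda}}(\overline{x}(\overrightarrow{\lambda}))\right)\neq 0$ for all $\overrightarrow{\lambda}\in V$, which is exactly the required non-degeneracy. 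No step presents a genuine obstacle; the only point demanding care is the bookkeeping ensuring that the IFT branch remains inside the regularity locus and that the multiplier attached to $\overline{x}(\overrightarrow{\lambda})$ is unambiguously $\overrightarrow{\lambda}$.
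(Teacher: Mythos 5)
Your proposal is correct and follows essentially the same route as the paper: apply the Implicit Function Theorem to $(\overline{x},\overrightarrow{\lambda})\mapsto\nabla F_{\overrightarrow{\lambda}}(\overline{x})$ using invertibility of the Hessian, then shrink the parameter neighborhood so that the open conditions $\nu\neq 0$, $\nabla C_1\wedge\dots\wedge\nabla C_{n-2}\neq 0$, and $\det(\operatorname{Hess}F_{\overrightarrow{\lambda}}(\overline{x}(\overrightarrow{\lambda})))\neq 0$ persist along the branch. Your explicit remark that the multiplier attached to $\overline{x}(\overrightarrow{\lambda})$ is unambiguously $\overrightarrow{\lambda}$ itself is a small point the paper leaves implicit, but otherwise the arguments coincide.
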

\begin{proof}
Let us define first the smooth function $\mathcal{F}:\Omega\times\mathbb{R}^{n-2}\rightarrow\mathbb{R}^{n} $ given by
$$\mathcal{F}(\overline{x},(\lambda_1,\dots,\lambda_{n-2})):=\nabla C_{n-1}(\overline{x})+\lambda_1 \nabla C_1 (\overline{x})+\dots +\lambda_{n-2} \nabla C_{n-2} (\overline{x}),$$
for every $(\overline{x},(\lambda_1,\dots,\lambda_{n-2}))\in \Omega\times\mathbb{R}^{n-2}$.

As $\overline{x}_e \in \mathcal{E}^{C_{n-1}}_{C_1 ,\dots, C_{n-2}}\subset \Omega$ is a regular equilibrium state of the integrable system \eqref{systy}, there exists $\overrightarrow{\lambda_{e}}:=(\lambda^{e}_1,\dots,\lambda^{e}_{n-2})\in\mathbb{R}^{n-2}$ such that $$\nabla C_{n-1}(\overline{x}_e)+\lambda^{e}_1 \nabla C_1 (\overline{x}_e)+\dots +\lambda^{e}_{n-2} \nabla C_{n-2} (\overline{x}_e)=0,$$ or equivalently, $\mathcal{F}(\overline{x}_e,(\lambda^{e}_1,\dots,\lambda^{e}_{n-2}))=0$. 

Since $\overline{x}_e$ is non-degenerate, it follows that $\det{(\operatorname{Hess}F_{\overrightarrow{\lambda_{e}}}(\overline{x}_e))}\neq 0$, where $F_{\overrightarrow{\lambda_{e}}}:\Omega\rightarrow \mathbb{R}$ is the smooth function given by $F_{\overrightarrow{\lambda_{e}}}:=C_{n-1}+\lambda^{e}_1 C_{1} +\dots +\lambda^{e}_{n-2} C_{n-2}$.

Hence, $\mathrm{D}_{\overline{x}}\mathcal{F}(\overline{x}_e,(\lambda^{e}_1,\dots,\lambda^{e}_{n-2}))=\operatorname{Hess}F_{\overrightarrow{\lambda_{e}}}(\overline{x}_e)$ is invertible, and consequently by the implicit function theorem, there exist $W\subseteq \mathbb{R}^{n-2}$, an open neighborhood of $\overrightarrow{\lambda_{e}}$, $U\subseteq \Omega$, an open neighborhood of $\overline{x}_e$, and a unique smooth function $\overline{x}: W\rightarrow U$ such that $\overline{x}(\overrightarrow{\lambda_{e}})=\overline{x}_e$, and  
$$
\mathcal{F}(\overline{x}(\overrightarrow{\lambda}),\overrightarrow{\lambda})=0, ~(\forall)\overrightarrow{\lambda}=(\lambda_1,\dots,\lambda_{n-2})\in W.
$$
Consequently, we obtain that 
\begin{equation}\label{rely1}
\nabla C_1 (\overline{x}(\overrightarrow{\lambda}))\wedge \dots \wedge \nabla C_{n-1} (\overline{x}(\overrightarrow{\lambda})) =0, ~ (\forall)\overrightarrow{\lambda}\in W,
\end{equation}
and hence $\overline{x}(\overrightarrow{\lambda})\in\mathcal{E}^{X}$, i.e., $\overline{x}(\overrightarrow{\lambda})$ is an equilibrium state of the integrable system \eqref{systy} for every $\overrightarrow{\lambda}\in W$.

Since $\overline{x}_e$ is a non-degenerate regular equilibrium state, it follows that apart from the equilibrium condition $\nabla C_1 (\overline{x}_e)\wedge \dots \wedge \nabla C_{n-1} (\overline{x}_e) = 0,$ $\overline{x}_e$ also verifies the following relations:
\begin{itemize}
\item [(i)] $\nu(\overline{x}_e)\neq 0,$
\item [(ii)] $\nabla C_1 (\overline{x}_e)\wedge \dots \wedge \nabla C_{n-2} (\overline{x}_e) \neq 0,$
\item [(iii)] $\det{(\operatorname{Hess}F_{\overrightarrow{\lambda_{e}}}(\overline{x}_e))}\neq 0.$
\end{itemize}
As $\nu,C_1,\dots,C_{n-1}\in\mathcal{C}^{\infty}(\Omega,\mathbb{R})$ are smooth functions, and $(i),(ii),(iii)$ are open conditions, it follows that there exists an open neighborhood $V\subseteq W$ of $\overrightarrow{\lambda_{e}}$ such that for every $\overrightarrow{\lambda}=(\lambda_1 ,\dots,\lambda_{n-2})\in V$ the following relations hold true:
\begin{itemize}
\item [(i)] $\nu(\overline{x}(\overrightarrow{\lambda}))\neq 0,$
\item [(ii)] $\nabla C_1 (\overline{x}(\overrightarrow{\lambda}))\wedge \dots \wedge \nabla C_{n-2} (\overline{x}(\overrightarrow{\lambda})) \neq 0,$
\item [(iii)] $\det{(\operatorname{Hess}F_{\overrightarrow{\lambda}}(\overline{x}(\overrightarrow{\lambda})))}\neq 0,$ where $F_{\overrightarrow{\lambda}}:=C_{n-1}+\lambda_1 C_{1} +\dots +\lambda_{n-2} C_{n-2}$.
\end{itemize}
Hence, the above relations together with the equality \eqref{rely1} imply that each element which belongs to the image of the smooth function $\overline{x}: V\rightarrow U$, is a non-degenerate regular equilibrium state of the integrable system \eqref{systy}.
\end{proof}

\section{A geometric invariant of non-degenerate equilibria of completely integrable systems}

As the main purpose of this article is to study the stability of non-degenerate regular equilibrium states of the completely integrable system \eqref{sys} (realized as the Hamiltonian system \eqref{systy}), our first step in this direction will be to define a local geometric invariant associated to each non-degenerate regular equilibrium state to be analyzed.

In order to do that, let $\overline{x}_e \in \mathcal{E}^{C_{n-1}}_{C_1 ,\dots, C_{n-2}} \subseteq \Omega$ be a regular equilibrium state of the completely integrable system \eqref{sys}, realized as the Hamiltonian dynamical system \eqref{systy}, i.e.,  $$(\Omega,\{\cdot,\cdot\}_{\nu;C_1,\dots,C_{n-2}},H = C_{n-1}).$$ 

Recall from the previous section that, since $\overline{x}_e \in \mathcal{E}^{C_{n-1}}_{C_1 ,\dots, C_{n-2}}=\{\overline{x} \in \Omega \ | \ \nu (\overline{x})\neq 0, \ \nabla C_1 (\overline{x})\wedge \dots \wedge \nabla C_{n-2} (\overline{x}) \neq 0, \ \nabla C_1 (\overline{x})\wedge \dots \wedge \nabla C_{n-2} (\overline{x})\wedge \nabla H (\overline{x}) =0\}$, there exists $(\lambda^{e}_1, \dots,\lambda^{e}_{n-2})\in\mathbb{R}^{n-2}$, such that $\nabla H(\overline{x}_e)+\lambda^{e}_1 \nabla C_1 (\overline{x}_e)+\dots +\lambda^{e}_{n-2} \nabla C_{n-2} (\overline{x}_e)=0$. Equivalently, the later condition can be written as$$\mathrm{d}(H+\lambda^{e}_1 C_1 +\dots + \lambda^{e}_{n-2} C_{n-2})(\overline{x}_e)=0,$$
since by the definition of the gradient vector field we have that
$$
\langle \nabla (H+\lambda^{e}_1 C_1 +\dots +\lambda^{e}_{n-2} C_{n-2}) (\overline{x}_e),u\rangle=\mathrm{d}(H+\lambda^{e}_1 C_1 +\dots + \lambda^{e}_{n-2} C_{n-2})(\overline{x}_e) \cdot u,
$$
for every $u\in T_{\overline{x}_e}\Omega=T_{\overline{x}_e}\mathbb{R}^{n}\cong \mathbb{R}^{n}$.

In order to have more compact notations, we denote $\overrightarrow{\lambda_{e}}:=(\lambda^{e}_1,\dots,\lambda^{e}_{n-2})\in\mathbb{R}^{n-2}$, and $F_{\overrightarrow{\lambda_{e}}}:\Omega\subseteq \mathbb{R}^{n} \longrightarrow \mathbb{R}$, 
\begin{equation}\label{flamb}
F_{\overrightarrow{\lambda_{e}}}:=H+\lambda^{e}_1 C_1 +\dots + \lambda^{e}_{n-2} C_{n-2}.
\end{equation}

At this stage, we have all necessary ingredients to introduce the main protagonist of this work, i.e., a scalar quantity associated to the regular equilibrium state $\overline{x}_e \in \mathcal{E}^{C_{n-1}}_{C_1 ,\dots, C_{n-2}}$, whose sign will tell us the stability of $\overline{x}_e$. In order to define this scalar quantity, the regular equilibrium point $\overline{x}_e$ needs also to be non-degenerate, in the sense of Definition \eqref{ndgequil}. As will be showed later (see Theorem \eqref{spec1}), despite of the apparent dependence on the Hamiltonian realization of the system \eqref{sys}, this scalar quantity depends only on the vector field $X$ and the associated equilibrium point, $\overline{x}_e$. 

\begin{definition}\label{invi}
For each non-degenerate regular equilibrium point of the system \eqref{systy}, $\overline x_e\in \mathcal{E}^{C_{n-1}}_{C_1 ,\dots, C_{n-2}}$, we define the scalar quantity
\begin{align*}
\mathcal{I}_{X}(\overline{x}_e):=&\nu^2(\overline{x}_e)\cdot \det{(\operatorname{Hess}F_{\overrightarrow{\lambda_{e}}}(\overline{x}_e))}\cdot <[\operatorname{Hess}F_{\overrightarrow{\lambda_{e}}}(\overline{x}_e)]^{-1}\cdot\nabla C_1 (\overline{x}_e)\wedge \\
&\wedge\dots \wedge [\operatorname{Hess}F_{\overrightarrow{\lambda_{e}}}(\overline{x}_e)]^{-1}\cdot\nabla C_{n-2} (\overline{x}_e), \nabla C_1 (\overline{x}_e)\wedge \dots \wedge \nabla C_{n-2} (\overline{x}_e)>_{n-2},
\end{align*}
where $F_{\overrightarrow{\lambda_{e}}}\in\mathcal{C}^{\infty}(\Omega,\mathbb{R})$ stands the smooth function associated to $\overline x_e$, given by the relation \eqref{flamb}.
\end{definition}

Next, we introduce the necessary tools to show the main property of $\mathcal{I}_{X}(\overline{x}_e)$, namely, the local invariance with respect to smooth deformations around $\overline x_e$. In order to do that, let $\Omega^{\prime}\subseteq \Omega$ be an arbitrary open neighborhood of $\overline{x}_e$, and let $\Phi : \Omega^{\prime}\subseteq \mathbb{R}^{n} \longrightarrow W^{\prime}:=\Phi(\Omega^{\prime})\subseteq \mathbb{R}^{n}$ be an arbitrary smooth diffeomorphism. Let us recall now a result from \cite{tgn}, which provides the explicit formula of $\Phi_{\star}X$, the push forward of the vector field $X$ by the diffeomorphism $\Phi$. For the sake of simplicity, we shall use in the sequel the same notation for the vector field $X$, and also for its restriction to $\Omega^{\prime}$.

\begin{theorem}[\cite{tgn}]\label{tsv}
Let 
\begin{equation}\label{hax}
X = \sum_{i=1}^{n}\nu \cdot \dfrac{\partial(C_1,\dots,C_{n-2},x_i,H)}{\partial(x_1,\dots,x_n)}\cdot\dfrac{\partial}{\partial{x_i}},
\end{equation}
be the vector field associated to the completely integrable system \eqref{sys}, written as the Hamiltonian dynamical system $\left(\Omega,\{\cdot,\cdot\}_{\nu;C_1,\dots,C_{n-2}},H=C_{n-1}\right)$. Let $\Omega^{\prime}\subseteq \Omega$ be an open subset, and let $\Phi:\Omega^{\prime}\rightarrow W^{\prime}:=\Phi(\Omega^{\prime})$ be a smooth diffeomorphism. 

Then, $\Phi_{\star}X$ is a Hamiltonian vector field too, with Hamiltonian $\Phi_{\star}H=\Phi_{\star}C_{n-1}$, defined on the Poisson manifold $\left(W^{\prime},\{\cdot,\cdot\}_{\nu_{\Phi};\Phi_{\star}C_1,\dots,\Phi_{\star}C_{n-2}}\right)$, and has the expression
\begin{equation}\label{vpush}
\Phi_{\star}X = \sum_{i=1}^{n}\nu_{\Phi} \cdot \dfrac{\partial(\Phi_{\star}C_1,\dots,\Phi_{\star}C_{n-2},y_i,\Phi_{\star}H)}{\partial(y_1,\dots,y_n)}\cdot\dfrac{\partial}{\partial{y_i}},
\end{equation}
where $\nu_{\Phi}=\Phi_{\star}\nu\cdot \Phi_{\star}\operatorname{Jac}(\Phi)$, and $(y_1,\dots,y_n)=\Phi(x_1,\dots,x_n)$, denote the local coordinates on $W$.
\end{theorem}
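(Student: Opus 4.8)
The plan is to deduce the statement from the functoriality of the Hamilton--Poisson formalism under diffeomorphisms, so that the only computational content becomes the identification of two Poisson brackets. Recall the standard fact that if $\{\cdot,\cdot\}$ is a Poisson bracket on $\Omega^{\prime}$ and $\Phi:\Omega^{\prime}\to W^{\prime}$ is a diffeomorphism, then $\{f,g\}^{\Phi}:=\Phi_{\star}\{f\circ\Phi,g\circ\Phi\}$ is again a Poisson bracket on $W^{\prime}$, and Hamiltonian vector fields transform naturally, $\Phi_{\star}X_{H}=X_{\Phi_{\star}H}$ with respect to $\{\cdot,\cdot\}^{\Phi}$. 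Applied to $X=X_{C_{n-1}}$ on $(\Omega^{\prime},\{\cdot,\cdot\}_{\nu;C_1,\dots,C_{n-2}})$, this immediately gives that $\Phi_{\star}X$ is a Hamiltonian vector field on $(W^{\prime},\{\cdot,\cdot\}^{\Phi})$ with Hamiltonian $\Phi_{\star}H=\Phi_{\star}C_{n-1}$; moreover $\Phi_{\star}C_{1},\dots,\Phi_{\star}C_{n-1}$ are first integrals of $\Phi_{\star}X$ (since $XC_k=0$ forces $(\Phi_{\star}X)(\Phi_{\star}C_k)=\Phi_{\star}(XC_k)=0$) and remain functionally independent almost everywhere. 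Thus the whole theorem reduces to showing that $\{\cdot,\cdot\}^{\Phi}$ coincides with $\{\cdot,\cdot\}_{\nu_{\Phi};\Phi_{\star}C_1,\dots,\Phi_{\star}C_{n-2}}$ for the specific rescaling $\nu_{\Phi}=\Phi_{\star}\nu\cdot\Phi_{\star}\operatorname{Jac}(\Phi)$, and then reading off the coordinate formula \eqref{vpush} from the explicit Hamiltonian-vector-field expression $X_i=\nu\cdot\partial(C_1,\dots,C_{n-2},x_i,H)/\partial(x_1,\dots,x_n)$ recalled in Section~2, now applied to the data $(\nu_{\Phi},\Phi_{\star}C_1,\dots,\Phi_{\star}C_{n-2},\Phi_{\star}H)$ on $W^{\prime}$.

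For the bracket identification I would work directly with the defining volume-form identity $\{f,g\}_{\nu;C_1,\dots,C_{n-2}}\,\mathrm{d}x_1\wedge\dots\wedge\mathrm{d}x_n=\nu\,\mathrm{d}C_1\wedge\dots\wedge\mathrm{d}C_{n-2}\wedge\mathrm{d}f\wedge\mathrm{d}g$ and pull it back along $\Phi$. Using $\Phi^{\ast}(\mathrm{d}y_1\wedge\dots\wedge\mathrm{d}y_n)=\operatorname{Jac}(\Phi)\,\mathrm{d}x_1\wedge\dots\wedge\mathrm{d}x_n$, the identities $\Phi^{\ast}\mathrm{d}(\Phi_{\star}C_k)=\mathrm{d}C_k$ and $\Phi^{\ast}\mathrm{d}(\Phi_{\star}H)=\mathrm{d}H$ (which follow from $\mathrm{d}$ commuting with pullback and $(\Phi_{\star}g)\circ\Phi=g$), and $\nu_{\Phi}\circ\Phi=\nu\cdot\operatorname{Jac}(\Phi)$, one finds that the pullback of $\nu_{\Phi}\,\mathrm{d}(\Phi_{\star}C_1)\wedge\dots\wedge\mathrm{d}(\Phi_{\star}C_{n-2})\wedge\mathrm{d}f\wedge\mathrm{d}g$ equals $\operatorname{Jac}(\Phi)\cdot\nu\,\mathrm{d}C_1\wedge\dots\wedge\mathrm{d}C_{n-2}\wedge\mathrm{d}(f\circ\Phi)\wedge\mathrm{d}(g\circ\Phi)$, which by the defining identity applied to $f\circ\Phi$ and $g\circ\Phi$ is $\operatorname{Jac}(\Phi)\cdot\{f\circ\Phi,g\circ\Phi\}_{\nu;C_1,\dots,C_{n-2}}\,\mathrm{d}x_1\wedge\dots\wedge\mathrm{d}x_n$. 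Comparing this with the pullback of the left-hand side of the definition of $\{f,g\}_{\nu_{\Phi};\Phi_{\star}C_1,\dots,\Phi_{\star}C_{n-2}}$, namely $\bigl(\{f,g\}_{\nu_{\Phi};\Phi_{\star}C_1,\dots,\Phi_{\star}C_{n-2}}\circ\Phi\bigr)\operatorname{Jac}(\Phi)\,\mathrm{d}x_1\wedge\dots\wedge\mathrm{d}x_n$, and cancelling the nowhere-vanishing factor $\operatorname{Jac}(\Phi)$, yields $\{f,g\}_{\nu_{\Phi};\Phi_{\star}C_1,\dots,\Phi_{\star}C_{n-2}}=\Phi_{\star}\{f\circ\Phi,g\circ\Phi\}_{\nu;C_1,\dots,C_{n-2}}=\{f,g\}^{\Phi}$, which is exactly what is needed.

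A more hands-on alternative, leading to the same place, is to push the Hodge-star form $X=(-\nu)\star(\nabla C_1\wedge\dots\wedge\nabla C_{n-1})$ forward pointwise: writing $A:=\mathrm{D}\Phi(x)$ and $x=\Phi^{-1}(y)$, one has $(\Phi_{\star}X)(y)=A\,X(x)$, $\nabla(\Phi_{\star}C_k)(y)=A^{-\top}\nabla C_k(x)$, and the purely linear-algebraic identity $A\,\star(w_1\wedge\dots\wedge w_{n-1})=\det(A)\,\star(A^{-\top}w_1\wedge\dots\wedge A^{-\top}w_{n-1})$, valid for every invertible $A$ and $w_1,\dots,w_{n-1}\in\R^{n}$ (proved by pairing both sides with an arbitrary $u\in\R^n$, using $\langle u,\star(w_1\wedge\dots\wedge w_{n-1})\rangle=\det[\,u\mid w_1\mid\dots\mid w_{n-1}\,]$ and the factorisation $[\,A^{\top}u\mid w_1\mid\dots\mid w_{n-1}\,]=A^{\top}[\,u\mid A^{-\top}w_1\mid\dots\mid A^{-\top}w_{n-1}\,]$); together with $\det A=\operatorname{Jac}(\Phi)(x)$ this reproduces \eqref{vpush} directly. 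In either route the only place needing genuine care --- and the place from which the correction factor $\nu_{\Phi}=\Phi_{\star}\nu\cdot\Phi_{\star}\operatorname{Jac}(\Phi)$ \emph{emerges} rather than being postulated --- is the bookkeeping of $\operatorname{Jac}(\Phi)$ as it enters through the top-degree form (respectively through $\det\mathrm{D}\Phi$ in the linear-algebra identity) and through the non-naturality of $\nabla$ under $\Phi$; one should also verify the routine but necessary fact that $\nu_{\Phi}$ vanishes only on a Lebesgue-null set, so that $(W^{\prime},\{\cdot,\cdot\}_{\nu_{\Phi};\Phi_{\star}C_1,\dots,\Phi_{\star}C_{n-2}},\Phi_{\star}C_{n-1})$ is again a legitimate Hamilton--Poisson realization in the sense of Section~2.
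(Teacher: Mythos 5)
Your proposal is correct, but note that the paper itself offers no proof to compare against: Theorem \ref{tsv} is imported verbatim from the reference \cite{tgn} and is used here as a black box. Judged on its own, your argument is complete. The first route is the natural one: the general transport of a Poisson bracket under a diffeomorphism, $\{f,g\}^{\Phi}=\Phi_{\star}\{f\circ\Phi,g\circ\Phi\}$, together with $\Phi_{\star}X_{H}=X_{\Phi_{\star}H}$, reduces everything to identifying $\{\cdot,\cdot\}^{\Phi}$ with $\{\cdot,\cdot\}_{\nu_{\Phi};\Phi_{\star}C_1,\dots,\Phi_{\star}C_{n-2}}$, and your pullback of the defining identity $\{f,g\}_{\nu;C_1,\dots,C_{n-2}}\,\mathrm{d}x_1\wedge\dots\wedge\mathrm{d}x_n=\nu\,\mathrm{d}C_1\wedge\dots\wedge\mathrm{d}C_{n-2}\wedge\mathrm{d}f\wedge\mathrm{d}g$ handles the single Jacobian factor correctly on both sides (one from $\Phi^{\ast}(\mathrm{d}y_1\wedge\dots\wedge\mathrm{d}y_n)$, one from $\nu_{\Phi}\circ\Phi=\nu\cdot\operatorname{Jac}(\Phi)$), so the identification and hence the coordinate formula \eqref{vpush} follow; this also makes transparent that the correction factor in $\nu_{\Phi}$ is forced rather than chosen. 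The second, pointwise route via the identity $A\,\star(w_1\wedge\dots\wedge w_{n-1})=\det(A)\,\star(A^{-\top}w_1\wedge\dots\wedge A^{-\top}w_{n-1})$ is also correct and has the advantage of producing the transformed vector field directly in the form \eqref{sywedge}, which is the form actually exploited elsewhere in the paper (e.g.\ in Proposition \ref{lemma1} via the gradient transformation rule \eqref{grdcomp}); it is the same mechanism by which the factor $\operatorname{Jac}(\Phi)$ appears. Either route is a legitimate proof of the cited theorem; the only cosmetic caveat is that the Jacobi identity for the target bracket is inherited automatically once the two brackets are identified, so your closing remark about $\nu_{\Phi}$ vanishing only on a null set is needed solely to recognize the result as a Hamiltonian realization in the sense of Section~2, not for the validity of \eqref{vpush}.
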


Next result describes the relation between non-degenerate regular equilibrium states of the vector field $X$ written in the form \eqref{hax}, and the corresponding equilibrium states of the vector field $\Phi_{\star}X$, where $\Phi : \Omega^{\prime}\subseteq \Omega\subseteq \mathbb{R}^{n} \longrightarrow W^{\prime}:=\Phi(\Omega^{\prime})\subseteq \mathbb{R}^{n}$ is an arbitrary smooth diffeomorphism.

\begin{proposition}\label{lemma1}
Let $\overline{x}_e \in \mathcal{E}^{C_{n-1}}_{C_1 ,\dots, C_{n-2}}$ be a regular equilibrium point of the vector field $X$ written in the form \eqref{hax}. Let $\Omega^{\prime}\subseteq \Omega$ be an open neighborhood of $\overline{x}_e$, and let $\Phi:\Omega^{\prime}\rightarrow W^{\prime}:=\Phi(\Omega^{\prime})$ be a smooth diffeomorphism. Then, $\Phi(\overline{x}_e)\in W^{\prime}$ is a regular equilibrium point of the vector field $\Phi_{\star}X$ written in the Hamiltonian form $$\left(W^{\prime},\{\cdot,\cdot\}_{\nu_{\Phi};\Phi_{\star}C_1,\dots,\Phi_{\star}C_{n-2}},\Phi_{\star}H = \Phi_{\star}C_{n-1}\right),$$ such that 
\begin{align*}
\Phi(\overline{x}_e)\in\mathcal{E}^{\Phi_{\star} C_{n-1}}_{\Phi_{\star} C_1 ,\dots, \Phi_{\star} C_{n-2}}:=&\{\overline{y}\in W^{\prime}\ | \ \nu_{\Phi}(\overline{y})\neq 0, \ \nabla (\Phi_{\star}C_{1}) (\overline{y})\wedge\dots\wedge\nabla (\Phi_{\star}C_{n-2}) (\overline{y})\neq 0,\\
& \ \nabla (\Phi_{\star}C_{1}) (\overline{y})\wedge\dots\wedge\nabla (\Phi_{\star}C_{n-2}) (\overline{y})\wedge\nabla (\Phi_{\star}H) (\overline{y})= 0\}.
\end{align*}

Also, if $F_{\overrightarrow{\lambda_{e}}}:=H+\lambda^{e}_1 C_1 +\dots + \lambda^{e}_{n-2} C_{n-2}$, where $\overrightarrow{\lambda_{e}}:=(\lambda^{e}_1, \dots,\lambda^{e}_{n-2})\in\mathbb{R}^{n-2}$  such that $\mathrm{d}(H+\lambda^{e}_1 C_1 +\dots + \lambda^{e}_{n-2} C_{n-2})(\overline{x}_e)=0$, then $\mathrm{d}(\Phi_{\star}F_{\overrightarrow{\lambda_{e}}})(\Phi(\overline{x}_e))=0$. 

Moreover, if $\overline{x}_e$ is non-degenerate  (i.e., $\det{(\operatorname{Hess}F_{\overrightarrow{\lambda_{e}}}(\overline{x}_e))}\neq 0$), then so is $\Phi(\overline{x}_e)$ (i.e., $\det{(\operatorname{Hess}(\Phi_{\star}F_{\overrightarrow{\lambda_{e}}})(\Phi(\overline{x}_e)))}\neq 0$).
\end{proposition}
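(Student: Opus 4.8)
The plan is to establish the three assertions of Proposition~\ref{lemma1} in sequence, exploiting the fact that everything transforms naturally under the diffeomorphism $\Phi$. First I would record the basic naturality facts: for any smooth function $f$, the push-forward is $\Phi_{\star}f = f\circ\Phi^{-1}$, so that $\mathrm{d}(\Phi_{\star}f)(\Phi(\overline{x})) = \mathrm{d}f(\overline{x})\circ (\mathrm{D}\Phi(\overline{x}))^{-1}$; in terms of gradients this reads $\nabla(\Phi_{\star}f)(\Phi(\overline{x})) = (\mathrm{D}\Phi(\overline{x}))^{-\top}\nabla f(\overline{x})$. Likewise $\nu_{\Phi}(\Phi(\overline{x})) = \nu(\overline{x})\cdot\operatorname{Jac}(\Phi)(\overline{x})$ by the formula $\nu_{\Phi}=\Phi_{\star}\nu\cdot\Phi_{\star}\operatorname{Jac}(\Phi)$ from Theorem~\ref{tsv}, and since $\Phi$ is a diffeomorphism $\operatorname{Jac}(\Phi)(\overline{x})\neq 0$ everywhere.

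For the first assertion, I would verify the three defining conditions of $\mathcal{E}^{\Phi_{\star}C_{n-1}}_{\Phi_{\star}C_1,\dots,\Phi_{\star}C_{n-2}}$ at $\Phi(\overline{x}_e)$. The condition $\nu_{\Phi}(\Phi(\overline{x}_e))\neq 0$ is immediate from $\nu(\overline{x}_e)\neq 0$ and $\operatorname{Jac}(\Phi)(\overline{x}_e)\neq 0$. For the wedge conditions, I would use that an invertible linear map $A:=(\mathrm{D}\Phi(\overline{x}_e))^{-\top}$ sends linearly independent vectors to linearly independent vectors and, more precisely, that $Av_1\wedge\dots\wedge Av_k = 0$ if and only if $v_1\wedge\dots\wedge v_k=0$ (indeed $\Lambda^k A$ is invertible). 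Applying this with $v_i = \nabla C_i(\overline{x}_e)$ gives $\nabla(\Phi_{\star}C_1)(\Phi(\overline{x}_e))\wedge\dots\wedge\nabla(\Phi_{\star}C_{n-2})(\Phi(\overline{x}_e))\neq 0$ from condition~(ii) on $\overline{x}_e$, and with $k=n-1$ (appending $\nabla C_{n-1}(\overline{x}_e)$) gives the vanishing of the $(n-1)$-fold wedge from the equilibrium condition at $\overline{x}_e$. Hence $\Phi(\overline{x}_e)$ lies in the stated set and is a regular equilibrium of $\Phi_{\star}X$.

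For the second assertion, since push-forward is linear in the function, $\Phi_{\star}F_{\overrightarrow{\lambda_{e}}} = \Phi_{\star}H + \lambda^e_1\Phi_{\star}C_1 + \dots + \lambda^e_{n-2}\Phi_{\star}C_{n-2}$, so $\mathrm{d}(\Phi_{\star}F_{\overrightarrow{\lambda_{e}}})(\Phi(\overline{x}_e)) = \mathrm{d}F_{\overrightarrow{\lambda_{e}}}(\overline{x}_e)\circ(\mathrm{D}\Phi(\overline{x}_e))^{-1} = 0$ because $\mathrm{d}F_{\overrightarrow{\lambda_{e}}}(\overline{x}_e)=0$. For the third assertion, I would differentiate the gradient relation once more. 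Writing $\overline{y}=\Phi(\overline{x})$ and $g:=\Phi_{\star}F_{\overrightarrow{\lambda_{e}}}$, from $\nabla g(\Phi(\overline{x})) = (\mathrm{D}\Phi(\overline{x}))^{-\top}\nabla F_{\overrightarrow{\lambda_{e}}}(\overline{x})$ I differentiate with respect to $\overline{x}$ at $\overline{x}_e$; the term involving the derivative of $(\mathrm{D}\Phi)^{-\top}$ is multiplied by $\nabla F_{\overrightarrow{\lambda_{e}}}(\overline{x}_e)=0$ and drops out, leaving $\operatorname{Hess}g(\Phi(\overline{x}_e))\cdot\mathrm{D}\Phi(\overline{x}_e) = (\mathrm{D}\Phi(\overline{x}_e))^{-\top}\cdot\operatorname{Hess}F_{\overrightarrow{\lambda_{e}}}(\overline{x}_e)$, i.e.
$$\operatorname{Hess}(\Phi_{\star}F_{\overrightarrow{\lambda_{e}}})(\Phi(\overline{x}_e)) = (\mathrm{D}\Phi(\overline{x}_e))^{-\top}\cdot\operatorname{Hess}F_{\overrightarrow{\lambda_{e}}}(\overline{x}_e)\cdot(\mathrm{D}\Phi(\overline{x}_e))^{-1}.$$
Taking determinants yields $\det(\operatorname{Hess}(\Phi_{\star}F_{\overrightarrow{\lambda_{e}}})(\Phi(\overline{x}_e))) = \det(\operatorname{Hess}F_{\overrightarrow{\lambda_{e}}}(\overline{x}_e))\cdot(\operatorname{Jac}(\Phi)(\overline{x}_e))^{-2}\neq 0$, so non-degeneracy is preserved. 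The main obstacle, and the only step requiring real care, is the second-derivative computation: one must be careful that the canonical Hessian operator (as in Definition~\ref{ndgequil}) does not transform as a $(0,2)$-tensor under the metric identification, so the congruence formula above is a fact about the symmetric-matrix representative, and one should justify dropping the $\nabla F_{\overrightarrow{\lambda_{e}}}(\overline{x}_e)$-term cleanly, for instance by working in coordinates with the product rule. This identity will moreover be exactly what is needed later to prove the invariance of $\mathcal{I}_{X}(\overline{x}_e)$.
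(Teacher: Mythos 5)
Your proof is correct and follows essentially the same route as the paper's: the gradient transformation formula $\nabla(\Phi_{\star}f)(\Phi(\overline{x}_e))=[\mathrm{D}\Phi^{-1}(\Phi(\overline{x}_e))]^{\top}\nabla f(\overline{x}_e)$ (your $(\mathrm{D}\Phi(\overline{x}_e))^{-\top}$ is the same operator) handles $\nu_{\Phi}\neq 0$, the two wedge conditions, and the criticality of $\Phi_{\star}F_{\overrightarrow{\lambda_{e}}}$, and the Hessian congruence at a critical point gives the determinant relation $\det(\operatorname{Hess}(\Phi_{\star}F_{\overrightarrow{\lambda_{e}}})(\Phi(\overline{x}_e)))=(\operatorname{Jac}(\Phi)(\overline{x}_e))^{-2}\det(\operatorname{Hess}F_{\overrightarrow{\lambda_{e}}}(\overline{x}_e))$. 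The only (welcome) difference is that you actually derive the congruence formula by differentiating the gradient relation and using $\nabla F_{\overrightarrow{\lambda_{e}}}(\overline{x}_e)=0$ to discard the term involving the derivative of $(\mathrm{D}\Phi)^{-\top}$, a step the paper asserts without proof.
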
 
\begin{proof}
Since $\nu(\overline{x}_e)\neq 0$ and $\Phi$ is a diffeomorphism, using the expression of the vector field $\Phi_{\star}X$ given in Theorem \eqref{tsv}, we obtain that 
\begin{equation*}
\nu_{\Phi}(\Phi(\overline{x}_e))=\Phi_{\star}\nu(\Phi(\overline{x}_e))\cdot \Phi_{\star}\operatorname{Jac}(\Phi)(\Phi(\overline{x}_e))=\nu(\overline{x}_e)\cdot\operatorname{Jac}(\Phi)(\overline{x}_e)\neq 0.
\end{equation*}
Next, since $\Phi$ is a diffeomorphism, the linear map $\mathrm{D}\Phi^{-1}(\Phi(\overline{x}_e)):\mathbb{R}^{n}\longrightarrow\mathbb{R}^{n}$ is an isomorphism of vector spaces, as well as its transpose (i.e., the adjoint map with respect to the canonical inner product on $\mathbb{R}^n$). Taking into account that for every smooth function $F\in\mathcal{C}^{\infty}(\Omega^{\prime},\mathbb{R})$, the following formula holds true
\begin{equation}\label{grdcomp}
[\mathrm{D}\Phi^{-1}(\Phi(\overline{x}_e))]^{\top}\cdot \nabla F(\overline{x}_e)=\nabla(\Phi_{\star}F)(\Phi(\overline{x}_e)),
\end{equation}
we get that the linear independence of the vectors $\nabla C_1 (\overline{x}_e), \dots, \nabla C_{n-2} (\overline{x}_e)$, is equivalent to the linear independence of the vectors $\nabla (\Phi_{\star}C_1) (\Phi(\overline{x}_e)), \dots, \nabla (\Phi_{\star}C_{n-2}) (\Phi(\overline{x}_e))$. Equivalently, we have that $\nabla C_1 (\overline{x}_e)\wedge \dots \wedge \nabla C_{n-2} (\overline{x}_e)\neq 0$ if and only if $\nabla (\Phi_{\star}C_1) (\Phi(\overline{x}_e))\wedge \dots \wedge \nabla (\Phi_{\star}C_{n-2}) (\Phi(\overline{x}_e))\neq 0$. 

The same argument implies that $\nabla C_1 (\overline{x}_e)\wedge \dots \wedge \nabla C_{n-2} (\overline{x}_e)\wedge \nabla H(\overline{x}_e) = 0$ if and only if $\nabla (\Phi_{\star}C_1) (\Phi(\overline{x}_e))\wedge \dots \wedge \nabla (\Phi_{\star}C_{n-2}) (\Phi(\overline{x}_e))\wedge\nabla (\Phi_{\star}H) (\Phi(\overline{x}_e))= 0$.

Moreover, since $\overline{x}_e \in \mathcal{E}^{C_{n-1}}_{C_1 ,\dots, C_{n-2}}$, recall that there exists $\overrightarrow{\lambda_{e}}=(\lambda^{e}_1, \dots,\lambda^{e}_{n-2})\in\mathbb{R}^{n-2}$, such that $\nabla H(\overline{x}_e)+\lambda^{e}_1 \nabla C_1 (\overline{x}_e)+\dots +\lambda^{e}_{n-2} \nabla C_{n-2} (\overline{x}_e)=0$ (or equivalently, $\mathrm{d}(H+\lambda^{e}_1 C_1 +\dots + \lambda^{e}_{n-2} C_{n-2})(\overline{x}_e)=\mathrm{d}F_{\overrightarrow{\lambda_{e}}}(\overline{x}_e)=0$). 

Applying the transpose map $[\mathrm{D}\Phi^{-1}(\Phi(\overline{x}_e))]^{\top}$ to the above equality and using the formula \eqref{grdcomp}, we get that $$\nabla (\Phi_{\star}H)(\Phi(\overline{x}_e))+\lambda^{e}_1 \nabla (\Phi_{\star}C_1) (\Phi(\overline{x}_e))+\dots +\lambda^{e}_{n-2} \nabla (\Phi_{\star}C_{n-2}) (\Phi(\overline{x}_e))=0,$$ or equivalently, $\mathrm{d}(\Phi_{\star}H+\lambda^{e}_1 \Phi_{\star}C_1 +\dots + \lambda^{e}_{n-2} \Phi_{\star}C_{n-2})(\Phi(\overline{x}_e))=\mathrm{d}(\Phi_{\star}F_{\overrightarrow{\lambda_{e}}})(\Phi(\overline{x}_e))=0$.

Since, $\overline{x}_e$ is a critical point for $F_{\overrightarrow{\lambda_{e}}}$ (and $\Phi(\overline{x}_e)$ is a critical point of $\Phi_{\star}F_{\overrightarrow{\lambda_{e}}}$), the following formula holds true
\begin{equation}\label{hesscomp}
\operatorname{Hess}(\Phi_{\star}F_{\overrightarrow{\lambda_{e}}})(\Phi(\overline{x}_e))=[\mathrm{D}\Phi^{-1}(\Phi(\overline{x}_e))]^{\top}\circ \operatorname{Hess}F_{\overrightarrow{\lambda_{e}}}(\overline{x}_e)\circ \mathrm{D}\Phi^{-1}(\Phi(\overline{x}_e)),
\end{equation}
and hence we obtain that if $\det{(\operatorname{Hess}F_{\overrightarrow{\lambda_{e}}}(\overline{x}_e))}\neq 0$, then
\begin{equation*}
\det{(\operatorname{Hess}(\Phi_{\star}F_{\overrightarrow{\lambda_{e}}})(\Phi(\overline{x}_e)))}=\left(\operatorname{Jac}(\Phi)(\overline{x}_e)\right)^{-2}\cdot\det{(\operatorname{Hess}F_{\overrightarrow{\lambda_{e}}}(\overline{x}_e))}\neq 0.
\end{equation*}
\end{proof}

Let us state now the main result of this section, and also the main tool we need to prove the stability criterion for the non-degenerate regular equilibrium states of the system \eqref{systy}. 

\begin{theorem}\label{frstmain}
Let $\overline{x}_e \in \mathcal{E}^{C_{n-1}}_{C_1 ,\dots, C_{n-2}}$ be a non-degenerate regular equilibrium point of the vector field $X$ written in the form \eqref{hax}, and let $F_{\overrightarrow{\lambda_{e}}}\in\mathcal{C}^{\infty}(\Omega,\mathbb{R})$ be the associated smooth function given by the relation \eqref{flamb}. Let $\Omega^{\prime}\subseteq \Omega$ be an arbitrary open neighborhood of $\overline{x}_e$, and let $\Phi : \Omega^{\prime}\subseteq \mathbb{R}^{n} \longrightarrow W^{\prime}:=\Phi(\Omega^{\prime})\subseteq \mathbb{R}^{n}$ be a smooth diffeomorphism. Then the following equality holds true:
\begin{equation*}
\mathcal{I}_{\Phi_{\star}X}(\Phi(\overline{x}_e))=\mathcal{I}_{X}(\overline{x}_e).
\end{equation*}
\end{theorem}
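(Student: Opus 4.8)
The strategy is to express $\mathcal{I}_{\Phi_{\star}X}(\Phi(\overline{x}_e))$ using Definition \eqref{invi} applied to the pushed-forward data, and then substitute the transformation rules already established in Proposition \eqref{lemma1} and Theorem \eqref{tsv}, hoping that all the Jacobian factors cancel. Concretely, write $A := \operatorname{Hess}F_{\overrightarrow{\lambda_{e}}}(\overline{x}_e)$, $P := \mathrm{D}\Phi^{-1}(\Phi(\overline{x}_e))$, and $J := \operatorname{Jac}(\Phi)(\overline{x}_e) = (\det P)^{-1}$. From the excerpt I already have: $\nu_{\Phi}(\Phi(\overline{x}_e)) = \nu(\overline{x}_e)\cdot J$; the gradient transformation $\nabla(\Phi_{\star}C_i)(\Phi(\overline{x}_e)) = P^{\top}\nabla C_i(\overline{x}_e)$ from \eqref{grdcomp}; the Hessian transformation $\operatorname{Hess}(\Phi_{\star}F_{\overrightarrow{\lambda_{e}}})(\Phi(\overline{x}_e)) = P^{\top} A\, P$ from \eqref{hesscomp}; and the determinant relation $\det(P^{\top}AP) = J^{-2}\det A$. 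The plan is to feed these into the definition of $\mathcal{I}_{\Phi_{\star}X}(\Phi(\overline{x}_e))$ term by term.

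The four factors transform as follows. First, $\nu_{\Phi}^2(\Phi(\overline{x}_e)) = J^2\,\nu^2(\overline{x}_e)$. Second, $\det(\operatorname{Hess}(\Phi_{\star}F_{\overrightarrow{\lambda_{e}}})(\Phi(\overline{x}_e))) = J^{-2}\det A$. These two already contribute a combined factor $J^2 \cdot J^{-2} = 1$, so the remaining task is to check that the inner product factor is unchanged. For the third factor, $[\operatorname{Hess}(\Phi_{\star}F_{\overrightarrow{\lambda_{e}}})(\Phi(\overline{x}_e))]^{-1} = (P^{\top}AP)^{-1} = P^{-1}A^{-1}P^{-\top}$, so applied to $\nabla(\Phi_{\star}C_i)(\Phi(\overline{x}_e)) = P^{\top}\nabla C_i(\overline{x}_e)$ this gives $P^{-1}A^{-1}\nabla C_i(\overline{x}_e)$. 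Hence the first slot of the $(n-2)$-vector inner product is $\bigwedge_{i=1}^{n-2} P^{-1}A^{-1}\nabla C_i(\overline{x}_e) = (\Lambda^{n-2}P^{-1})\bigl(\bigwedge_{i=1}^{n-2} A^{-1}\nabla C_i(\overline{x}_e)\bigr)$, and the second slot is $\bigwedge_{i=1}^{n-2} P^{\top}\nabla C_i(\overline{x}_e) = (\Lambda^{n-2}P^{\top})\bigl(\bigwedge_{i=1}^{n-2}\nabla C_i(\overline{x}_e)\bigr)$. Now I invoke the standard linear-algebra identity that for the induced inner product $\langle\cdot,\cdot\rangle_{n-2}$ on decomposable $(n-2)$-vectors and any linear maps, $\langle (\Lambda^{n-2}M) u, (\Lambda^{n-2}N) v\rangle_{n-2} = \langle (\Lambda^{n-2}(N^{\top}M)) u, v\rangle_{n-2}$; applying it with $M = P^{-1}$ and $N = P^{\top}$ gives $N^{\top}M = P\cdot P^{-1} = \mathrm{Id}$, so the inner product factor is literally $\langle \bigwedge A^{-1}\nabla C_i(\overline{x}_e), \bigwedge \nabla C_i(\overline{x}_e)\rangle_{n-2}$, exactly the factor in $\mathcal{I}_X(\overline{x}_e)$.

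Assembling: $\mathcal{I}_{\Phi_{\star}X}(\Phi(\overline{x}_e)) = J^2\nu^2(\overline{x}_e)\cdot J^{-2}\det A \cdot \langle \bigwedge A^{-1}\nabla C_i(\overline{x}_e), \bigwedge\nabla C_i(\overline{x}_e)\rangle_{n-2} = \mathcal{I}_X(\overline{x}_e)$. I should also remark at the outset that Proposition \eqref{lemma1} guarantees $\Phi(\overline{x}_e)$ is a non-degenerate regular equilibrium of $\Phi_{\star}X$, so $\mathcal{I}_{\Phi_{\star}X}(\Phi(\overline{x}_e))$ is well-defined and Definition \eqref{invi} legitimately applies. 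The main obstacle I anticipate is the bookkeeping around the wedge-product inner product: one must be careful that the relevant identity is $\langle (\Lambda^k M)u,(\Lambda^k N)v\rangle_k = \langle (\Lambda^k (N^{\top}M))u, v\rangle_k$ (which follows from $\langle \Lambda^k M u, w\rangle_k = \langle u, \Lambda^k (M^{\top}) w\rangle_k$ plus functoriality $\Lambda^k(M^{\top})\Lambda^k N = \Lambda^k(M^{\top}N)$), and that the precise constants $\Phi_{\star}(\operatorname{Jac}\Phi)$ evaluated at $\Phi(\overline{x}_e)$ pull back correctly to $\operatorname{Jac}(\Phi)(\overline{x}_e)$ — but these are exactly the computations already performed inside the proof of Proposition \eqref{lemma1}, so no genuinely new difficulty arises; the result is essentially a repackaging of those transformation laws.
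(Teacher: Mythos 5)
Your proposal is correct and follows essentially the same route as the paper's proof: both substitute the transformation laws \eqref{grdcomp} and \eqref{hesscomp} into Definition \eqref{invi}, cancel the Jacobian factors between $\nu_{\Phi}^2$ and the Hessian determinant, and use the adjoint identity for the induced inner product on $(n-2)$-vectors to see that $\mathrm{D}\Phi(\overline{x}_e)$ and $(\mathrm{D}\Phi^{-1}(\Phi(\overline{x}_e)))^{\top}$ combine to the identity in the third factor. The only cosmetic difference is that you package the cancellation as the identity $\langle(\Lambda^{k}M)u,(\Lambda^{k}N)v\rangle_{k}=\langle(\Lambda^{k}(N^{\top}M))u,v\rangle_{k}$, whereas the paper carries the same manipulation out term by term.
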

\begin{proof}
Recall from Proposition \eqref{lemma1} that $\Phi(\overline{x}_e)\in\mathcal{E}^{\Phi_{\star} C_{n-1}}_{\Phi_{\star} C_1 ,\dots, \Phi_{\star} C_{n-2}}$ is a non-degenerate regular equilibrium point of the vector field $\Phi_{\star}X$ written in the Hamiltonian form $$\left(W^{\prime},\{\cdot,\cdot\}_{\nu_{\Phi};\Phi_{\star}C_1,\dots,\Phi_{\star}C_{n-2}},\Phi_{\star}H = \Phi_{\star}C_{n-1}\right).$$ Moreover, $\Phi(\overline{x}_e)\in\mathcal{E}^{\Phi_{\star} C_{n-1}}_{\Phi_{\star} C_1 ,\dots, \Phi_{\star} C_{n-2}}$ is a non-degenerate critical point of the smooth function $\Phi_{\star}F_{\overrightarrow{\lambda_{e}}}$, i.e., $\det{(\operatorname{Hess}(\Phi_{\star}F_{\overrightarrow{\lambda_{e}}})(\Phi(\overline{x}_e)))}\neq 0$. Using the formulas \eqref{grdcomp}, \eqref{hesscomp}, we obtain successively:
\begin{align*}
\mathcal{I}&_{\Phi_{\star}X}(\Phi(\overline{x}_e))=\nu_{\Phi}^2(\Phi(\overline{x}_e))\cdot \det{(\operatorname{Hess}(\Phi_{\star}F_{\overrightarrow{\lambda_{e}}})(\Phi(\overline{x}_e)))}\cdot <[\operatorname{Hess}(\Phi_{\star}F_{\overrightarrow{\lambda_{e}}})(\Phi(\overline{x}_e))]^{-1}\cdot\\
&\nabla(\Phi_{\star} C_1) (\Phi(\overline{x}_e))\wedge\dots \wedge [\operatorname{Hess}(\Phi_{\star}F_{\overrightarrow{\lambda_{e}}})(\Phi(\overline{x}_e))]^{-1}\cdot\nabla(\Phi_{\star} C_{n-2}) (\Phi(\overline{x}_e)),\\
& \nabla (\Phi_{\star}C_1) (\Phi(\overline{x}_e))\wedge \dots \wedge \nabla(\Phi_{\star} C_{n-2}) (\Phi(\overline{x}_e))>_{n-2}\\
&=[\nu^{2}(\overline{x}_e)\cdot(\operatorname{Jac}(\Phi)(\overline{x}_e))^{2}]\cdot[\left(\operatorname{Jac}(\Phi)(\overline{x}_e)\right)^{-2}\cdot\det{(\operatorname{Hess}F_{\overrightarrow{\lambda_{e}}}(\overline{x}_e))}]\cdot\\
&<[(\mathrm{D}\Phi^{-1}(\Phi(\overline{x}_e)))^{\top}\circ \operatorname{Hess}F_{\overrightarrow{\lambda_{e}}}(\overline{x}_e)\circ \mathrm{D}\Phi^{-1}(\Phi(\overline{x}_e))]^{-1}\cdot[(\mathrm{D}\Phi^{-1}(\Phi(\overline{x}_e)))^{\top}\cdot \nabla C_1(\overline{x}_e)]\\
&\wedge\dots\wedge [(\mathrm{D}\Phi^{-1}(\Phi(\overline{x}_e)))^{\top}\circ \operatorname{Hess}F_{\overrightarrow{\lambda_{e}}}(\overline{x}_e)\circ \mathrm{D}\Phi^{-1}(\Phi(\overline{x}_e))]^{-1}\cdot[(\mathrm{D}\Phi^{-1}(\Phi(\overline{x}_e)))^{\top}\cdot \nabla C_{n-2}(\overline{x}_e)],\\
& (\mathrm{D}\Phi^{-1}(\Phi(\overline{x}_e)))^{\top}\cdot \nabla C_1(\overline{x}_e) \wedge \dots \wedge (\mathrm{D}\Phi^{-1}(\Phi(\overline{x}_e)))^{\top}\cdot \nabla C_{n-2}(\overline{x}_e)>_{n-2}\\
&=\nu^{2}(\overline{x}_e)\cdot\det{(\operatorname{Hess}F_{\overrightarrow{\lambda_{e}}}(\overline{x}_e))}\cdot <\mathrm{D}\Phi(\overline{x}_e)\cdot[[\operatorname{Hess}F_{\overrightarrow{\lambda_{e}}}(\overline{x}_e)]^{-1}\cdot\nabla C_{1}(\overline{x}_e)]\\
&\wedge\dots\wedge \mathrm{D}\Phi(\overline{x}_e)\cdot[[\operatorname{Hess}F_{\overrightarrow{\lambda_{e}}}(\overline{x}_e)]^{-1}\cdot\nabla C_{n-2}(\overline{x}_e)], (\mathrm{D}\Phi^{-1}(\Phi(\overline{x}_e)))^{\top}\cdot \nabla C_1(\overline{x}_e)\\
&\wedge\dots \wedge (\mathrm{D}\Phi^{-1}(\Phi(\overline{x}_e)))^{\top}\cdot \nabla C_{n-2}(\overline{x}_e)>_{n-2}\\
&=\nu^{2}(\overline{x}_e)\cdot\det{(\operatorname{Hess}F_{\overrightarrow{\lambda_{e}}}(\overline{x}_e))}\cdot <[\operatorname{Hess}F_{\overrightarrow{\lambda_{e}}}(\overline{x}_e)]^{-1}\cdot\nabla C_{1}(\overline{x}_e)\\
&\wedge\dots\wedge [\operatorname{Hess}F_{\overrightarrow{\lambda_{e}}}(\overline{x}_e)]^{-1}\cdot\nabla C_{n-2}(\overline{x}_e), (\mathrm{D}\Phi(\overline{x}_e))^T \cdot[(\mathrm{D}\Phi^{-1}(\Phi(\overline{x}_e)))^{\top}\cdot \nabla C_1(\overline{x}_e)]\\
&\wedge\dots \wedge (\mathrm{D}\Phi(\overline{x}_e))^T\cdot[(\mathrm{D}\Phi^{-1}(\Phi(\overline{x}_e)))^{\top}\cdot \nabla C_{n-2}(\overline{x}_e)]>_{n-2}\\
&=\nu^{2}(\overline{x}_e)\cdot\det{(\operatorname{Hess}F_{\overrightarrow{\lambda_{e}}}(\overline{x}_e))}\cdot <[\operatorname{Hess}F_{\overrightarrow{\lambda_{e}}}(\overline{x}_e)]^{-1}\cdot\nabla C_{1}(\overline{x}_e)\\
&\wedge\dots\wedge [\operatorname{Hess}F_{\overrightarrow{\lambda_{e}}}(\overline{x}_e)]^{-1}\cdot\nabla C_{n-2}(\overline{x}_e), \nabla C_1(\overline{x}_e)\wedge\dots\wedge\nabla C_{n-2}(\overline{x}_e)>_{n-2} = \mathcal{I}_{X}(\overline{x}_e),
\end{align*}
and hence we get the conclusion.
\end{proof}

\section{A stability criterion for non-degenerate equilibria of completely integrable systems}

The aim of this section is to present the main result of this article, which provides a criterion to test the stability of non-degenerate regular equilibrium states of the completely integrable system \eqref{sys} written in the Hamiltonian form \eqref{systy}. More precisely, in the notations of the previous section, let $\overline{x}_e$ be a non-degenerate regular equilibrium point of the vector field $X$ written in the form \eqref{hax}, and let $\mathcal{I}_{X}(\overline{x}_e)$ be the associated scalar quantity introduced in Definition \eqref{invi}. Then, the stability criterion states that \textit{if $\mathcal{I}_{X}(\overline{x}_e)<0$ then the equilibrium $\overline{x}_e$ is unstable, whereas if $\mathcal{I}_{X}(\overline{x}_e)>0$ then the equilibrium $\overline{x}_e$ is Lyapunov stable}. Moreover, the characteristic polynomial of the linearization of $X$ at $\overline{x}_e$, $\mathfrak{L}^{X}(\overline{x}_e)$, is given by $p_{\mathfrak{L}^{X}(\overline{x}_e)}(\mu)=(-\mu)^{n-2}\cdot\left( \mu^2 + \mathcal{I}_{X}(\overline{x}_e) \right)$.

In order to prove the above mentioned stability test, we need first to recall some technical details. Let us start by recalling some of the main concepts of the Lyapunov stability of equilibrium states of a general dynamical system. In order to do that, let $\Omega \subseteq \mathbb{R}^n$ be an open set, and let $X\in\mathfrak{X}(\Omega)$ be a smooth vector field. If one denotes by $\{F_{t}^X\}_{t}$ the flow of $X$ (i.e., $\dfrac{\mathrm{d}}{\mathrm{d}t}F_{t}^X(\overline{x})=X(F_{t}^X(\overline{x}))$, $F_{0}^X(\overline{x})=\overline{x}$, for all $\overline{x}\in\Omega$), then a point $\overline{x}_e \in \Omega$ is called an \textit{equilibrium point} of $X$, if $X(\overline{x}_e)=0$, or equivalently, if $F_{t}^X(\overline{x}_e)= \overline{x}_e$ for all $t\in\mathbb{R}$.

An equilibrium point $\overline{x}_e \in \Omega$ of the vector field $X$ is called \textit{Lyapunov stable}, or \textit{nonlinearly stable}, if for every open neighborhood $U\subseteq \Omega$ of $\overline{x}_e$, there exists an open neighborhood $V\subseteq U$ of $\overline{x}_e$ such that $F_{t}^X(\overline{x})\in U$ for any $\overline{x}\in V$ and any $t\geq 0$. An equilibrium state which is not Lyapunov stable is called \textit{unstable}.

Let $\overline{x}_e \in \Omega$ be an equilibrium point of the vector field $X\in\mathfrak{X}(\Omega)$. Recall that the \textit{linearization} of the vector field $X$ at the equilibrium point $\overline{x}_e$, is the linear map $\mathfrak{L}^{X}(\overline{x}_e):\mathbb{R}^{n}\longrightarrow \mathbb{R}^{n}$ defined by
\begin{equation*}
\mathfrak{L}^{X}(\overline{x}_e) \cdot v:=\frac{\mathrm{d}}{\mathrm{d}t}\left( \mathrm{D}F_{t}^X(\overline{x}_e) \cdot v \right)|_{t=0}, \ \text{for all} \ v\in \mathbb{R}^{n}.
\end{equation*}

If the spectrum of the linear map $\mathfrak{L}^{X}(\overline{x}_e)$ lies in the strict left-half complex plane, or on the imaginary axis, the equilibrium point $\overline{x}_e \in \Omega$ is called \textit{spectrally stable}. The equilibrium point $\overline{x}_e$ is called \textit{spectrally unstable} if at least one eigenvalue of $\mathfrak{L}^{X}(\overline{x}_e)$ has strictly positive real part. Since Lyapunov stability implies spectral stability, a spectrally unstable equilibrium point, will also be an unstable equilibrium point (for details see, e.g., \cite{abrahammarsden}).

Let us present now a result which provides the relation between the linearization of the vector field $X$ at an equilibrium point $\overline{x}_e \in \Omega^{\prime}\subseteq\Omega$, and the linearization of the vector field $\Phi_{\star}X$ at the equilibrium point $\Phi(\overline{x}_e) \in W^{\prime}:=\Phi(\Omega^{\prime})\subseteq \mathbb{R}^{n}$, where $\Omega^{\prime}\subseteq \mathbb{R}^{n}$ is an open neighborhood of $\overline{x}_e$, and $\Phi: \Omega^{\prime}\subseteq\mathbb{R}^{n}\longrightarrow W^{\prime}\subseteq\mathbb{R}^{n}$ is a smooth diffeomorphism. 

\begin{proposition}\label{diflin}
Let $\overline{x}_e\in\Omega\subseteq\mathbb{R}^{n}$ be an equilibrium point of the vector field $X$. Let $\Omega^{\prime}\subseteq \Omega$ be an open neighborhood of $\overline{x}_e$, and let $\Phi: \Omega^{\prime}\subseteq\mathbb{R}^{n}\longrightarrow W^{\prime}:=\Phi(\Omega^{\prime})\subseteq\mathbb{R}^{n}$ be a smooth diffeomorphism. Then $\Phi(\overline{x}_e)\in W^{\prime}$ is an equilibrium point of the vector field $\Phi_{\star}X$, and moreover, the following relation between the corresponding linearizations holds true:
\begin{equation}\label{dln}
\mathfrak{L}^{\Phi_{\star}X}(\Phi(\overline{x}_e))=\mathrm{D}\Phi(\overline{x}_e)\circ\mathfrak{L}^{X}(\overline{x}_e)\circ(\mathrm{D}\Phi\left(\overline{x}_e)\right)^{-1}.
\end{equation}
\end{proposition}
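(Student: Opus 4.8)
The plan is to verify the two assertions in sequence: first that $\Phi(\overline{x}_e)$ is an equilibrium point of $\Phi_{\star}X$, and then the conjugation formula \eqref{dln} for the linearizations. The first part is immediate: by definition of the push-forward, $(\Phi_{\star}X)(\overline{y}) = \mathrm{D}\Phi(\Phi^{-1}(\overline{y}))\cdot X(\Phi^{-1}(\overline{y}))$ for $\overline{y}\in W^{\prime}$; evaluating at $\overline{y}=\Phi(\overline{x}_e)$ gives $\mathrm{D}\Phi(\overline{x}_e)\cdot X(\overline{x}_e) = \mathrm{D}\Phi(\overline{x}_e)\cdot 0 = 0$, since $\overline{x}_e$ is an equilibrium of $X$. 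Thus $\Phi(\overline{x}_e)$ is an equilibrium of $\Phi_{\star}X$.

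For the linearization formula, the key structural fact is that push-forward intertwines the flows: $F_{t}^{\Phi_{\star}X} = \Phi\circ F_{t}^{X}\circ\Phi^{-1}$ on a suitable neighborhood, which is the standard naturality of flows under diffeomorphisms. First I would record this, then differentiate. Evaluating at the equilibrium, $F_{t}^{\Phi_{\star}X}(\Phi(\overline{x}_e)) = \Phi(F_{t}^{X}(\overline{x}_e)) = \Phi(\overline{x}_e)$, and the chain rule gives $\mathrm{D}F_{t}^{\Phi_{\star}X}(\Phi(\overline{x}_e)) = \mathrm{D}\Phi(F_{t}^{X}(\overline{x}_e))\circ \mathrm{D}F_{t}^{X}(\overline{x}_e)\circ \mathrm{D}\Phi^{-1}(\Phi(\overline{x}_e))$. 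Since $F_{t}^{X}(\overline{x}_e)=\overline{x}_e$ for all $t$, the first factor is the constant $\mathrm{D}\Phi(\overline{x}_e)$ and the last factor is the constant $\mathrm{D}\Phi^{-1}(\Phi(\overline{x}_e)) = (\mathrm{D}\Phi(\overline{x}_e))^{-1}$, independent of $t$. Applying a vector $v$, differentiating with respect to $t$ at $t=0$, and using the definition of $\mathfrak{L}$ then yields
\[
\mathfrak{L}^{\Phi_{\star}X}(\Phi(\overline{x}_e))\cdot v = \mathrm{D}\Phi(\overline{x}_e)\cdot\left(\frac{\mathrm{d}}{\mathrm{d}t}\big(\mathrm{D}F_{t}^{X}(\overline{x}_e)\cdot w\big)\big|_{t=0}\right)
\]
with $w = (\mathrm{D}\Phi(\overline{x}_e))^{-1}v$, which is exactly $\mathrm{D}\Phi(\overline{x}_e)\circ\mathfrak{L}^{X}(\overline{x}_e)\circ(\mathrm{D}\Phi(\overline{x}_e))^{-1}\cdot v$, as claimed.

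An alternative, perhaps cleaner, route avoids the flow entirely: the linearization of a vector field $Y$ at an equilibrium $\overline{y}_e$ coincides with the Jacobian matrix $\mathrm{D}Y(\overline{y}_e)$. Differentiating the identity $(\Phi_{\star}X)\circ\Phi = \mathrm{D}\Phi\cdot X$ at $\overline{x}_e$ and using that the term involving $\mathrm{D}^{2}\Phi(\overline{x}_e)\cdot(\cdot\,,X(\overline{x}_e))$ drops out because $X(\overline{x}_e)=0$, one gets $\mathrm{D}(\Phi_{\star}X)(\Phi(\overline{x}_e))\circ\mathrm{D}\Phi(\overline{x}_e) = \mathrm{D}\Phi(\overline{x}_e)\circ\mathrm{D}X(\overline{x}_e)$, which rearranges to \eqref{dln}. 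The only mild subtlety — the main obstacle, such as it is — is being careful that the second-derivative term genuinely vanishes, which relies precisely on the equilibrium hypothesis; I would state this explicitly. As an immediate consequence worth noting, the two linearizations are conjugate linear maps, hence have the same characteristic polynomial and spectrum, so spectral stability and its failure are preserved under the diffeomorphism.
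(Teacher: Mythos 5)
Your proof is correct and its main argument — conjugating the flows via $F_{t}^{\Phi_{\star}X}=\Phi\circ F_{t}^{X}\circ\Phi^{-1}$, applying the chain rule, using $F_{t}^{X}(\overline{x}_e)=\overline{x}_e$ to freeze the outer factors, and differentiating at $t=0$ — is essentially identical to the paper's proof. The alternative Jacobian computation you sketch (where the $\mathrm{D}^{2}\Phi$ term vanishes because $X(\overline{x}_e)=0$) is a valid flow-free variant, but it is not needed and the paper does not use it.
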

\begin{proof}
If one denotes by $\{F_{t}^X\}_{t}$ the flow of $X$, then the flow of $\Phi_{\star}X$ is given by $\{\Phi\circ F_{t}^X \circ \Phi^{-1}\}_{t}$. Hence,  $(\Phi\circ F_{t}^X \circ \Phi^{-1})(\Phi(\overline{x}_e))=\Phi(F_{t}^X (\overline{x}_e))=\Phi(\overline{x}_e)$, for all $t\in\mathbb{R}$ (since $\overline{x}_e$ is an equilibrium point of $X$, i.e., $F_{t}^X (\overline{x}_e)=\overline{x}_e$, for all $t\in\mathbb{R}$), and consequently $\Phi(\overline{x}_e)$ is an equilibrium point of $\Phi_{\star}X$. In order to prove the second statement, let $w\in\mathbb{R}^{n}$ be arbitrary chosen. Then we obtain successively the following equalities:
\begin{align*}
\mathfrak{L}^{\Phi_{\star}X}(\Phi(\overline{x}_e))\cdot w &=\dfrac{\mathrm{d}}{\mathrm{d}t}\left[\mathrm{D}F^{\Phi_{\star}X}_{t}(\Phi(\overline{x}_e))\cdot w\right]|_{t=0}=\dfrac{\mathrm{d}}{\mathrm{d}t}\left[\mathrm{D}(\Phi\circ F^{X}_{t}\circ \Phi^{-1})(\Phi(\overline{x}_e))\cdot w\right]|_{t=0}\\
&=\dfrac{\mathrm{d}}{\mathrm{d}t}\left[\mathrm{D}\Phi(F^{X}_{t}(\overline{x}_e))\cdot (\mathrm{D}F^{X}_{t}(\overline{x}_e)\cdot (\mathrm{D}\Phi^{-1}(\Phi(\overline{x}_e))\cdot w ) ) \right]|_{t=0}\\
&=\dfrac{\mathrm{d}}{\mathrm{d}t}\left[\mathrm{D}\Phi(\overline{x}_e)\cdot (\mathrm{D}F^{X}_{t}(\overline{x}_e)\cdot (\mathrm{D}\Phi^{-1}(\Phi(\overline{x}_e))\cdot w ) ) \right]|_{t=0}\\
&=\mathrm{D}\Phi(\overline{x}_e)\cdot \dfrac{\mathrm{d}}{\mathrm{d}t} \left[ \mathrm{D}F^{X}_{t}(\overline{x}_e)\cdot (\mathrm{D}\Phi^{-1}(\Phi ( \overline{x}_e))\cdot w) \right]|_{t=0}\\
&=\mathrm{D}\Phi(\overline{x}_e)\cdot(\mathfrak{L}^{X}(\overline{x}_e)\cdot(\mathrm{D}\Phi^{-1}(\Phi ( \overline{x}_e))\cdot w))\\
&=(\mathrm{D}\Phi(\overline{x}_e)\circ\mathfrak{L}^{X}(\overline{x}_e)\circ \mathrm{D}\Phi^{-1}(\Phi ( \overline{x}_e)))\cdot w,
\end{align*}
and hence we get the conclusion.
\end{proof}
\begin{corollary}\label{spectrum}
If $\overline{x}_e\in\Omega\subseteq\mathbb{R}^{n}$ is an equilibrium point of the vector field $X$, $\Omega^{\prime}\subseteq \Omega$ is an open neighborhood of $\overline{x}_e$, and $\Phi: \Omega^{\prime}\subseteq\mathbb{R}^{n}\longrightarrow W^{\prime}:=\Phi(\Omega^{\prime})\subseteq\mathbb{R}^{n}$ is a smooth diffeomorphism, then the linear maps $\mathfrak{L}^{\Phi_{\star}X}(\Phi(\overline{x}_e))$ and $\mathfrak{L}^{X}(\overline{x}_e)$ have the same characteristic polynomial.
\end{corollary}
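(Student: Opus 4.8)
The plan is to obtain this as an immediate consequence of Proposition~\ref{diflin} together with the elementary fact that conjugate endomorphisms of $\mathbb{R}^{n}$ share the same characteristic polynomial. First I would observe that, since $\Phi$ is a smooth diffeomorphism, the linear map $A:=\mathrm{D}\Phi(\overline{x}_e):\mathbb{R}^{n}\to\mathbb{R}^{n}$ is invertible, its inverse being $\mathrm{D}\Phi^{-1}(\Phi(\overline{x}_e))$ (differentiate $\Phi^{-1}\circ\Phi=\operatorname{Id}$ and use the chain rule). By Proposition~\ref{diflin}, $\Phi(\overline{x}_e)$ is an equilibrium point of $\Phi_{\star}X$ and the corresponding linearizations satisfy
\[
\mathfrak{L}^{\Phi_{\star}X}(\Phi(\overline{x}_e))=A\circ\mathfrak{L}^{X}(\overline{x}_e)\circ A^{-1},
\]
so the two linear maps are conjugate.

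To finish, I would compute the characteristic polynomial directly: for every $\mu\in\mathbb{C}$, writing $\mu\operatorname{Id}=A\circ(\mu\operatorname{Id})\circ A^{-1}$ and invoking the multiplicativity of the determinant,
\[
\det\!\left(\mathfrak{L}^{\Phi_{\star}X}(\Phi(\overline{x}_e))-\mu\operatorname{Id}\right)=\det\!\left(A\circ\bigl(\mathfrak{L}^{X}(\overline{x}_e)-\mu\operatorname{Id}\bigr)\circ A^{-1}\right)=\det(A)\cdot\det\!\left(\mathfrak{L}^{X}(\overline{x}_e)-\mu\operatorname{Id}\right)\cdot\det(A)^{-1},
\]
which equals $\det\!\left(\mathfrak{L}^{X}(\overline{x}_e)-\mu\operatorname{Id}\right)$. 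Hence $p_{\mathfrak{L}^{\Phi_{\star}X}(\Phi(\overline{x}_e))}=p_{\mathfrak{L}^{X}(\overline{x}_e)}$, as claimed.

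There is essentially no obstacle here: this corollary is a purely formal consequence of Proposition~\ref{diflin}, and the only point requiring care is that the conjugating map in that proposition is genuinely invertible — which is precisely why one needs $\Phi$ to be a diffeomorphism rather than merely a smooth map, so that the factors $\det(A)$ and $\det(A)^{-1}$ cancel. In particular this shows that the spectrum of $\mathfrak{L}^{X}(\overline{x}_e)$, and notions such as spectral stability or instability, are invariant under smooth changes of coordinates, which is what will later allow the analysis to be carried out after normalizing $X$ near $\overline{x}_e$.
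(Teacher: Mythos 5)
Your proof is correct and follows exactly the route the paper intends: the corollary is stated without proof precisely because it is an immediate consequence of the conjugation formula \eqref{dln} in Proposition \ref{diflin} together with the standard similarity argument $\det(A(L-\mu I)A^{-1})=\det(L-\mu I)$. Your write-up simply makes explicit the details the paper leaves to the reader, including the invertibility of $\mathrm{D}\Phi(\overline{x}_e)$.
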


Let us return now to the main problem of this article, namely the stability analysis of the equilibrium states of the completely integrable system \eqref{sys}. In order to do that, let $\overline{x}_e \in \mathcal{E}^{C_{n-1}}_{C_1 ,\dots, C_{n-2}}$ be a regular equilibrium state of the completely integrable system \eqref{sys} modeled as the Hamiltonian dynamical system $(\Omega,\{\cdot,\cdot\}_{\nu;C_1,\dots,C_{n-2}},H = C_{n-1})$ \eqref{systy}. As $\overline{x}_e \in \mathcal{E}^{C_{n-1}}_{C_1 ,\dots, C_{n-2}}=\{\overline{x} \in \Omega \ | \ \nu (\overline{x})\neq 0, \ \nabla C_1 (\overline{x})\wedge \dots \wedge \nabla C_{n-2} (\overline{x}) \neq 0, \ \nabla C_1 (\overline{x})\wedge \dots \wedge \nabla C_{n-1} (\overline{x}) =0\}$, due to the continuity of $\nu,\nabla C_1,\dots,\nabla C_{n-2}$, there exists $\Omega^{\prime}\subseteq\Omega$, an open neighborhood of $\overline{x}_e$, such that $\nu (\overline{x})\neq 0$, and $\nabla C_1 (\overline{x})\wedge \dots \wedge \nabla C_{n-2} (\overline{x}) \neq 0$, for every $\overline{x}\in\Omega^{\prime}$. Note that since $\nu$ is continuous and has no zeros in $\Omega^{\prime}$, then $\nu$ must have constant sign on $\Omega^{\prime}$. Consequently, the Hamiltonian system \eqref{systy} can be brought to Darboux normal form relative to the open set $\Omega^{\prime}$. For the sake of completeness, let us recall the Darboux normal form of completely integrable systems, as stated in \cite{tgn}. For another presentation of the Darboux normal form, see \cite{berme1}.

\begin{theorem}[\cite{tgn}]\label{darbnf}
Let $$X = \sum_{i=1}^{n}\nu \cdot \dfrac{\partial(C_1,\dots,C_{n-2},x_i,H)}{\partial(x_1,\dots,x_n)}\cdot\dfrac{\partial}{\partial{x_i}},$$ be the vector field associated to the completely integrable system \eqref{sys}, written as the Hamiltonian system \eqref{systy}, with Hamiltonian $H:=C_{n-1}$, defined on the Poisson manifold $\left(\Omega,\{\cdot,\cdot\}_{\nu;C_1,\dots,C_{n-2}}\right)$. Let $\overline{x}_e \in\mathcal{E}^{C_{n-1}}_{C_1 ,\dots, C_{n-2}}$ be a regular equilibrium point of the system \eqref{systy}. Let $(\Omega^{\prime},\Phi_1,\Phi_2)$ be a triple consisting of an open neighborhood $\Omega^{\prime}\subseteq \Omega$ of $\overline{x}_e$, and two smooth functions $\Phi_1,\Phi_2 \in\mathcal{C}^{\infty}(\Omega^{\prime},\mathbb{R})$, such that the map $\Phi:\Omega^{\prime}\rightarrow W^{\prime}=\Phi(\Omega^{\prime})$, given by $\Phi=(\Phi_1,\Phi_2,C_1,\dots,C_{n-2})$, is a smooth diffeomorphism, and $\nu(\overline{x})\neq 0$, for every $\overline{x}\in\Omega^{\prime}$. Then $\Phi_{\star}X$, the push forward of the vector field $X$ by $\Phi$, is a Hamiltonian vector field, with Hamiltonian $\Phi_{\star}H=\Phi_{\star}C_{n-1}$, defined on the Poisson manifold $\left(W^{\prime},\{\cdot,\cdot\}_{\nu_{\Phi};\Phi_{\star}C_1,\dots,\Phi_{\star}C_{n-2}}\right)$, and has the expression
\begin{equation}\label{darb}
\Phi_{\star}X = \nu_{\Phi}\cdot\left[ \dfrac{\partial(\Phi_{\star}H)}{\partial y_2}\cdot\dfrac{\partial}{\partial y_1}-\dfrac{\partial(\Phi_{\star}H)}{\partial y_1}\cdot\dfrac{\partial}{\partial y_2}\right],
\end{equation}
where $\nu_{\Phi}=\Phi_{\star}\nu\cdot \Phi_{\star}\operatorname{Jac}(\Phi)$, and $(y_1,\dots,y_n)=\Phi(x_1,\dots,x_n)$, denote the local coordinates on  $W^{\prime}$.
\end{theorem}

According to Corollary \eqref{spectrum}, in order to compute the spectrum of the linearization $\mathfrak{L}^{X}(\overline{x}_e)$, one can compute instead the spectrum of the linearization $\mathfrak{L}^{\Phi_{\star}X}(\Phi(\overline{x}_e))$, where $\Phi$ is the diffeomorphism introduced in Theorem \eqref{darbnf}. More precisely, $\Phi$ is defined by a triple  $(\Omega^{\prime},\Phi_1,\Phi_2)$ consisting of an open neighborhood $\Omega^{\prime}\subseteq \Omega$ of the equilibrium point $\overline{x}_e$, and two smooth functions $\Phi_1,\Phi_2 \in\mathcal{C}^{\infty}(\Omega^{\prime},\mathbb{R})$, such that $\Phi:\Omega^{\prime}\rightarrow W^{\prime}=\Phi(\Omega^{\prime})$, given by $\Phi:=(\Phi_1,\Phi_2,C_1,\dots,C_{n-2})$, is a smooth diffeomorphism. Moreover, recall also from Theorem \eqref{darbnf} that $\nu(\overline{x})\neq 0$, for every $\overline{x}\in\Omega^{\prime}$. Next we present an instability result, based on the fact that if the spectrum of the linearization $\mathfrak{L}^{X}(\overline{x}_e)$ contains an eigenvalue with strictly positive real part, then the equilibrium state $\overline{x}_e$ is unstable.

\begin{theorem}\label{spec1}
Let $\overline{x}_e \in \mathcal{E}^{C_{n-1}}_{C_1 ,\dots, C_{n-2}}$ be a non-degenerate regular equilibrium point of the vector field $X$ written in the form \eqref{hax}, and let $\mathcal{I}_{X}(\overline{x}_e)$ be the associated scalar quantity introduced in Definition \eqref{invi}. Then the characteristic polynomial of the linearization  $\mathfrak{L}^{X}(\overline{x}_e)$ is given by 
$$
p_{\mathfrak{L}^{X}(\overline{x}_e)}(\mu)=(-\mu)^{n-2}\cdot\left( \mu^2 + \mathcal{I}_{X}(\overline{x}_e) \right).
$$
Moreover, if $\mathcal{I}_{X}(\overline{x}_e)<0$, then the equilibrium state $\overline{x}_e$ is unstable.
\end{theorem}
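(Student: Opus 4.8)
The plan is to reduce the whole statement to a computation in Darboux normal form and then to match the result with $\mathcal{I}_X(\overline x_e)$ by means of the classical Jacobi identity for complementary minors of an inverse matrix. By Corollary \eqref{spectrum} the characteristic polynomial of $\mathfrak{L}^X(\overline x_e)$ coincides with that of $\mathfrak{L}^{\Phi_{\star}X}(\Phi(\overline x_e))$, and by Theorem \eqref{frstmain} we have $\mathcal{I}_X(\overline x_e)=\mathcal{I}_{\Phi_{\star}X}(\Phi(\overline x_e))$, where $\Phi=(\Phi_1,\Phi_2,C_1,\dots,C_{n-2})$ is a Darboux diffeomorphism as in Theorem \eqref{darbnf}, defined on a neighborhood $\Omega^{\prime}$ of $\overline x_e$ on which $\nu$ has no zeros and $\nabla C_1\wedge\dots\wedge\nabla C_{n-2}\neq 0$. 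Hence it suffices to prove the formula for the vector field $\Phi_{\star}X$ at $\overline y_e:=\Phi(\overline x_e)$, in the $y$-coordinates. In these coordinates each $\Phi_{\star}C_i$ is the coordinate function $y_{i+2}$, so $\nabla(\Phi_{\star}C_i)(\overline y_e)=e_{i+2}$; by Proposition \eqref{lemma1}, $\overline y_e$ is a non-degenerate regular equilibrium of $\Phi_{\star}X$, and since the linear terms $\lambda^e_i\,y_{i+2}$ contribute nothing to second derivatives, $\operatorname{Hess}(\Phi_{\star}F_{\overrightarrow{\lambda_e}})(\overline y_e)=\operatorname{Hess}(\Phi_{\star}H)(\overline y_e)=:\mathcal{H}$, which is invertible. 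Finally, the equilibrium condition $\nabla(\Phi_{\star}C_1)(\overline y_e)\wedge\dots\wedge\nabla(\Phi_{\star}C_{n-2})(\overline y_e)\wedge\nabla(\Phi_{\star}H)(\overline y_e)=0$ becomes $e_3\wedge\dots\wedge e_n\wedge\nabla(\Phi_{\star}H)(\overline y_e)=0$, which forces $\dfrac{\partial(\Phi_{\star}H)}{\partial y_1}(\overline y_e)=\dfrac{\partial(\Phi_{\star}H)}{\partial y_2}(\overline y_e)=0$.

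Next I would compute the linearization, which (via the variational equation at an equilibrium) is just the Jacobian of $\Phi_{\star}X$ at $\overline y_e$. By Theorem \eqref{darbnf}, the components of $\Phi_{\star}X$ in the $y$-coordinates are $\nu_{\Phi}\dfrac{\partial(\Phi_{\star}H)}{\partial y_2}$, $-\nu_{\Phi}\dfrac{\partial(\Phi_{\star}H)}{\partial y_1}$, and $0,\dots,0$. Differentiating and evaluating at $\overline y_e$ — where the $y_1$- and $y_2$-partials of $\Phi_{\star}H$ vanish — only the first two rows of the Jacobian survive, and it takes the block form $\left(\begin{smallmatrix}A & B\\ 0 & 0\end{smallmatrix}\right)$ with $A=\nu_{\Phi}(\overline y_e)\left(\begin{smallmatrix}\mathcal{H}_{21} & \mathcal{H}_{22}\\ -\mathcal{H}_{11} & -\mathcal{H}_{12}\end{smallmatrix}\right)$, where $\mathcal{H}_{k\ell}$ is the $(k,\ell)$-entry of $\mathcal{H}$. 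Since $\mathcal{H}$ is symmetric, $\operatorname{tr}A=0$ and $\det A=\nu_{\Phi}^{2}(\overline y_e)\bigl(\mathcal{H}_{11}\mathcal{H}_{22}-\mathcal{H}_{12}^{2}\bigr)$. Because the Jacobian is block upper triangular, $\det(\mathfrak{L}^{\Phi_{\star}X}(\overline y_e)-\mu I_n)=\det(A-\mu I_2)\cdot(-\mu)^{n-2}$, so its characteristic polynomial is $(-\mu)^{n-2}\bigl(\mu^{2}-\operatorname{tr}(A)\mu+\det A\bigr)=(-\mu)^{n-2}\bigl(\mu^{2}+\det A\bigr)$.

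It remains to check $\det A=\mathcal{I}_{\Phi_{\star}X}(\overline y_e)$ from Definition \eqref{invi}. Using $\nabla(\Phi_{\star}C_i)(\overline y_e)=e_{i+2}$ and $\langle u_1\wedge\dots\wedge u_{n-2},v_1\wedge\dots\wedge v_{n-2}\rangle_{n-2}=\det[\langle u_i,v_j\rangle]$, the wedge-product factor of Definition \eqref{invi} equals $\det\bigl[(\mathcal{H}^{-1})_{i+2,\,j+2}\bigr]_{1\le i,j\le n-2}$, i.e.\ the determinant of the bottom-right $(n-2)\times(n-2)$ block of $\mathcal{H}^{-1}$. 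By Jacobi's identity for complementary minors of an inverse — whose sign factor is $+1$ here, since the index set $\{3,\dots,n\}$ together with its copy contributes an even exponent — this determinant equals $\bigl(\mathcal{H}_{11}\mathcal{H}_{22}-\mathcal{H}_{12}^{2}\bigr)/\det\mathcal{H}$. Multiplying by $\nu_{\Phi}^{2}(\overline y_e)\det\mathcal{H}$ as prescribed in Definition \eqref{invi} gives $\mathcal{I}_{\Phi_{\star}X}(\overline y_e)=\nu_{\Phi}^{2}(\overline y_e)\bigl(\mathcal{H}_{11}\mathcal{H}_{22}-\mathcal{H}_{12}^{2}\bigr)=\det A$. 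Combining this with the previous paragraph, with Corollary \eqref{spectrum} and with Theorem \eqref{frstmain} yields $p_{\mathfrak{L}^X(\overline x_e)}(\mu)=(-\mu)^{n-2}\bigl(\mu^{2}+\mathcal{I}_X(\overline x_e)\bigr)$. Finally, if $\mathcal{I}_X(\overline x_e)<0$, then $\mu=\sqrt{-\mathcal{I}_X(\overline x_e)}>0$ is a root of $p_{\mathfrak{L}^X(\overline x_e)}$, so $\mathfrak{L}^X(\overline x_e)$ has an eigenvalue with strictly positive real part; hence $\overline x_e$ is spectrally unstable, and therefore unstable.

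The main obstacle is the linear-algebra step in the last paragraph: recognizing the wedge-product factor of Definition \eqref{invi} as the bottom-right $(n-2)$-minor of $\mathcal{H}^{-1}$ and then invoking Jacobi's complementary-minor identity with the correct sign. Everything else — the push-forward formula, the block structure of the Jacobian, and the elementary $2\times2$ trace and determinant — is routine once one observes that in Darboux coordinates the equilibrium condition $\nabla C_1\wedge\dots\wedge\nabla C_{n-1}(\overline x_e)=0$ amounts exactly to the vanishing of the $y_1$- and $y_2$-partials of $\Phi_{\star}H$ at $\overline y_e$, which is what forces the linearization into the required shape.
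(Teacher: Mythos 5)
Your proposal is correct and follows essentially the same route as the paper: reduce to Darboux coordinates via Corollary \eqref{spectrum} and Theorem \eqref{frstmain}, read off the block-triangular Jacobian, and identify $\det A$ with $\mathcal{I}_{\Phi_{\star}X}(\Phi(\overline{x}_e))$. The only difference is cosmetic: where you invoke Jacobi's complementary-minor identity (with the correct sign) to equate the bottom-right $(n-2)$-minor of $\mathcal{H}^{-1}$ with $(\mathcal{H}_{11}\mathcal{H}_{22}-\mathcal{H}_{12}^{2})/\det\mathcal{H}$, the paper proves that same special case by an explicit block-matrix manipulation using the relations \eqref{invbloc}.
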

\begin{proof}
Recall from Corollary \eqref{spectrum} that the characteristic polynomials of $\mathfrak{L}^{X}(\overline{x}_e)$ and  $\mathfrak{L}^{\Phi_{\star}X}(\Phi(\overline{x}_e))$ are equal for every diffeomorphism $\Phi$ defined on some open neighborhood of $\overline{x}_e$. Hence, in order to simplify computations, we choose $\Phi$ as given in Theorem \eqref{darbnf} and we compute the associated linearization, $\mathfrak{L}^{\Phi_{\star}X}(\Phi(\overline{x}_e))$. More precisely, we choose a triple $(\Omega^{\prime},\Phi_1,\Phi_2)$ consisting of an open neighborhood $\Omega^{\prime}\subseteq \Omega$ of $\overline{x}_e$, and two smooth functions $\Phi_1,\Phi_2 \in\mathcal{C}^{\infty}(\Omega^{\prime},\mathbb{R})$, such that $\Phi:\Omega^{\prime}\rightarrow W^{\prime}:=\Phi(\Omega^{\prime})$, given by $\Phi:=(\Phi_1,\Phi_2,C_1,\dots,C_{n-2})$, is a smooth diffeomorphism, and moreover $\nu(\overline{x})\neq 0$, for every $\overline{x}\in\Omega^{\prime}$.

In order to compute $\mathfrak{L}^{\Phi_{\star}X}(\Phi(\overline{x}_e))$, let us recall first from Proposition \eqref{lemma1} that $\Phi(\overline{x}_e)\in \mathcal{E}^{\Phi_{\star} C_{n-1}}_{\Phi_{\star} C_1 ,\dots, \Phi_{\star} C_{n-2}}$ is a non-degenerate regular equilibrium point of the vector field $\Phi_{\star}X$ written in the Hamiltonian form $\left(W^{\prime},\{\cdot,\cdot\}_{\nu_{\Phi};\Phi_{\star}C_1,\dots,\Phi_{\star}C_{n-2}},\Phi_{\star}H = \Phi_{\star}C_{n-1}\right)$. Consequently, we have that $\mathrm{d}(\Phi_{\star}F_{\overrightarrow{\lambda_{e}}})(\Phi(\overline{x}_e))=0$, and  $\det{(\operatorname{Hess}(\Phi_{\star}F_{\overrightarrow{\lambda_{e}}})(\Phi(\overline{x}_e)))}\neq 0$, where $F_{\overrightarrow{\lambda_{e}}}:=H+\lambda^{e}_1 C_1 +\dots + \lambda^{e}_{n-2} C_{n-2}$, with $\overrightarrow{\lambda_{e}}:=(\lambda^{e}_1, \dots,\lambda^{e}_{n-2})\in\mathbb{R}^{n-2}$, such that $\mathrm{d}(H+\lambda^{e}_1 C_1 +\dots + \lambda^{e}_{n-2} C_{n-2})(\overline{x}_e)=0$.

Using the definition of $\Phi:\Omega^{\prime}\rightarrow W^{\prime}=\Phi(\Omega^{\prime})$, i.e., $\Phi =(\Phi_1,\Phi_2,C_1,\dots,C_{n-2})$, the expression of $\Phi_{\star}F_{\overrightarrow{\lambda_{e}}}\in\mathcal{C}^{\infty}(W^{\prime},\mathbb{R})$ becomes
\begin{align*}
(\Phi_{\star}F_{\overrightarrow{\lambda_{e}}})(y_1,\dots, y_n)&=(\Phi_{\star}H)(y_1,\dots, y_n)+\lambda^{e}_1 (\Phi_{\star}C_1)(y_1,\dots, y_n) \\
&+\dots + \lambda^{e}_{n-2} (\Phi_{\star}C_{n-2})(y_1,\dots, y_n)\\
&=(\Phi_{\star}H)(y_1,\dots, y_n)+\lambda^{e}_1 y_3 +\dots + \lambda^{e}_{n-2} y_n,
\end{align*}
for every $(y_1,\dots, y_n)\in W^{\prime}$.

Hence, since $\mathrm{d}(\Phi_{\star}F_{\overrightarrow{\lambda_{e}}})(\Phi(\overline{x}_e))=0$, we get that
\begin{align}\label{relimp1}
\begin{split}
\dfrac{\partial(\Phi_{\star}H)}{\partial y_1}(\Phi(\overline{x}_e))&=\dfrac{\partial(\Phi_{\star}H)}{\partial y_2}(\Phi(\overline{x}_e))=0,\\
\dfrac{\partial(\Phi_{\star}H)}{\partial y_{i+2}}(\Phi(\overline{x}_e))&=-\lambda^{e}_i,\ \text{for} \ i\in\{1,\dots,n-2\}.
\end{split}
\end{align}

Moreover, for every $(y_1,\dots, y_n)\in W^{\prime}$, and respectively for each $i,j\in\{1,\dots,n\}$, we have that
\begin{align}\label{relimp2}
\begin{split}
\dfrac{\partial^{2}(\Phi_{\star}F_{\overrightarrow{\lambda_{e}}})}{\partial y_i \partial y_j}(y_1,\dots, y_n)=\dfrac{\partial^{2}(\Phi_{\star}H)}{\partial y_i \partial y_j}(y_1,\dots, y_n).
\end{split}
\end{align}

Since $\dfrac{\partial(\Phi_{\star}H)}{\partial y_1}(\Phi(\overline{x}_e))=\dfrac{\partial(\Phi_{\star}H)}{\partial y_2}(\Phi(\overline{x}_e))=0$, using the expression \eqref{darb} of the vector field $\Phi_{\star}X$, we get the following matrix representation of the linear map $\mathfrak{L}^{\Phi_{\star}X}(\Phi(\overline{x}_e))$ with respect to the canonical basis of $\mathbb{R}^n$ (where $T_{\Phi(\overline{x}_e)}W^{\prime}=T_{\Phi(\overline{x}_e)}\mathbb{R}^{n}\cong\mathbb{R}^n $, $e_i := (0,\dots, 0,1,0,\dots,0)\in\mathbb{R}^{n}\cong\dfrac{\partial}{\partial y_i}|_{\Phi(\overline{x}_e)}\in T_{\Phi(\overline{x}_e)}W^{\prime}=T_{\Phi(\overline{x}_e)}\mathbb{R}^{n}$, for every $i\in\{1,\dots,n\}$):

\begin{equation}\label{lin1}
\mathfrak{L}^{\Phi_{\star}X}(\Phi(\overline{x}_e))=\nu_{\Phi}(\Phi(\overline{x}_e))\cdot
\left[
\begin{array}{c|c}
L_{2,2}(\Phi(\overline{x}_e)) & L_{2,n-2} (\Phi(\overline{x}_e))\\ 
\hline
O_{n-2,2} &  O_{n-2}
\end{array}\right],
\end{equation}
where the blocks $L_{2,2}(\Phi(\overline{x}_e))\in\mathcal{M}_{2}(\mathbb{R})$, $L_{2,n-2} (\Phi(\overline{x}_e))\in\mathcal{M}_{2,n-2}(\mathbb{R})$, are given by
\begin{align*}
L_{2,2}(\Phi(\overline{x}_e)):=
\begin{bmatrix}
\dfrac{\partial^2 (\Phi_{\star}H)}{\partial y_2 \partial y_1}(\Phi(\overline{x}_e)) & \dfrac{\partial^2 (\Phi_{\star}H)}{\partial y_2 ^2}(\Phi(\overline{x}_e))\\
- \dfrac{\partial^2 (\Phi_{\star}H)}{\partial y_1^2}(\Phi(\overline{x}_e)) & - \dfrac{\partial^2 (\Phi_{\star}H)}{\partial y_1 \partial y_2}(\Phi(\overline{x}_e))\\
\end{bmatrix},
\end{align*}
and
\begin{align*}
L_{2,n-2}(\Phi(\overline{x}_e)):=
\begin{bmatrix}
\dfrac{\partial^2 (\Phi_{\star}H)}{\partial y_2 \partial y_3}(\Phi(\overline{x}_e)) & \dots & \dfrac{\partial^2 (\Phi_{\star}H)}{\partial y_2 \partial y_n}(\Phi(\overline{x}_e)) \\
- \dfrac{\partial^2 (\Phi_{\star}H)}{\partial y_1 \partial y_3}(\Phi(\overline{x}_e)) & \dots & - \dfrac{\partial^2 (\Phi_{\star}H)}{\partial y_1 \partial y_n}(\Phi(\overline{x}_e)) \\
\end{bmatrix}.
\end{align*}
Using the formula for the determinant of block matrices, we obtain the following expression for the characteristic polynomial of $\mathfrak{L}^{\Phi_{\star}X}(\Phi(\overline{x}_e))$:
\begin{align}\label{charpol1}
\begin{split}
p_{\mathfrak{L}^{\Phi_{\star}X}(\Phi(\overline{x}_e))}(\mu)&=\det(\mathfrak{L}^{\Phi_{\star}X}(\Phi(\overline{x}_e))-\mu I_{n})\\
&=\det(O_{n-2}-\mu I_{n-2})\cdot\det[\nu_{\Phi}(\Phi(\overline{x}_e)) \cdot L_{2,2}(\overline{x}_e)-\mu I_2]\\
&=(-\mu)^{n-2}\cdot\left[\mu^2 +\nu^2_{\Phi}(\Phi(\overline{x}_e))\cdot\left( \dfrac{\partial^2 (\Phi_{\star}H)}{\partial y_1^2}(\Phi(\overline{x}_e)) \cdot\dfrac{\partial^2 (\Phi_{\star}H)}{\partial y_2^2}(\Phi(\overline{x}_e))\right.\right.\\
&-\left.\left.\left( \dfrac{\partial^2 (\Phi_{\star}H)}{\partial y_1 \partial y_2}(\Phi(\overline{x}_e))\right)^2\right)\right].
\end{split}
\end{align}
Next, we will show that 
\begin{equation}\label{ifeq}
\nu^2_{\Phi}(\Phi(\overline{x}_e))\cdot\left( \dfrac{\partial^2 (\Phi_{\star}H)}{\partial y_1^2}(\Phi(\overline{x}_e)) \cdot\dfrac{\partial^2 (\Phi_{\star}H)}{\partial y_2^2}(\Phi(\overline{x}_e))-\left( \dfrac{\partial^2 (\Phi_{\star}H)}{\partial y_1 \partial y_2}(\Phi(\overline{x}_e))\right)^2\right)=\mathcal{I}_{\Phi_{\star}X}(\Phi(\overline{x}_e)).
\end{equation}
In order to do that, let us recall first the definition of $\mathcal{I}_{\Phi_{\star}X}(\Phi(\overline{x}_e))$, i.e., 
\begin{align}\label{ifi}
\begin{split}
\mathcal{I}&_{\Phi_{\star}X}(\Phi(\overline{x}_e))=\nu_{\Phi}^2(\Phi(\overline{x}_e))\cdot \det{(\operatorname{Hess}(\Phi_{\star}F_{\overrightarrow{\lambda_{e}}})(\Phi(\overline{x}_e)))}\cdot <[\operatorname{Hess}(\Phi_{\star}F_{\overrightarrow{\lambda_{e}}})(\Phi(\overline{x}_e))]^{-1}\cdot\\
&\nabla(\Phi_{\star} C_1) (\Phi(\overline{x}_e))\wedge\dots \wedge [\operatorname{Hess}(\Phi_{\star}F_{\overrightarrow{\lambda_{e}}})(\Phi(\overline{x}_e))]^{-1}\cdot\nabla(\Phi_{\star} C_{n-2}) (\Phi(\overline{x}_e)),\\
& \nabla (\Phi_{\star}C_1) (\Phi(\overline{x}_e))\wedge \dots \wedge \nabla(\Phi_{\star} C_{n-2}) (\Phi(\overline{x}_e))>_{n-2}.
\end{split}
\end{align}
Using the relations \eqref{relimp2}, we obtain the following block matrix representation of the linear map $\operatorname{Hess}(\Phi_{\star}F_{\overrightarrow{\lambda_{e}}})(\Phi(\overline{x}_e))$ with respect to the canonical basis of $\mathbb{R}^n$:
\begin{equation}\label{hess1}
\operatorname{Hess}(\Phi_{\star}F_{\overrightarrow{\lambda_{e}}})(\Phi(\overline{x}_e))=
\left[
\begin{array}{c|c}
H_{2,2}(\Phi(\overline{x}_e)) & H_{2,n-2}(\Phi(\overline{x}_e))\\ 
\hline
H_{n-2,2}(\Phi(\overline{x}_e)) &  H_{n-2,n-2}(\Phi(\overline{x}_e))
\end{array}\right],
\end{equation}
where the blocks $H_{k,l}(\Phi(\overline{x}_e))\in\mathcal{M}_{k,l}(\mathbb{R})$, $k,l\in\{2,n-2\}$, are given by
\begin{align*}
H_{2,2}(\Phi(\overline{x}_e)):&=\left[ \dfrac{\partial^2 (\Phi_{\star}H)}{\partial y_i \partial y_j}(\Phi(\overline{x}_e))\right]_{1\leq i,j\leq 2},  H_{2,n-2}(\Phi(\overline{x}_e)):=\left[ \dfrac{\partial^2 (\Phi_{\star}H)}{\partial y_i \partial y_j}(\Phi(\overline{x}_e)) \right]_{1\leq i\leq 2, 3\leq j\leq n}\\
H_{n-2,2}(\Phi(\overline{x}_e)):&=\left[ \dfrac{\partial^2 (\Phi_{\star}H)}{\partial y_i \partial y_j}(\Phi(\overline{x}_e))\right]_{3\leq i\leq n, 1\leq j\leq 2},  H_{n-2,n-2}(\Phi(\overline{x}_e)):=\left[ \dfrac{\partial^2 (\Phi_{\star}H)}{\partial y_i \partial y_j}(\Phi(\overline{x}_e)) \right]_{3\leq i,j\leq n}.
\end{align*}
Let us denote
\begin{equation}\label{hess2}
\left[\operatorname{Hess}(\Phi_{\star}F_{\overrightarrow{\lambda_{e}}})(\Phi(\overline{x}_e))\right]^{-1}:=
\left[
\begin{array}{c|c}
\widetilde{H}_{2,2}(\Phi (\overline{x}_e)) & \widetilde{H}_{2,n-2}(\Phi( \overline{x}_e))\\ 
\hline
\widetilde{H}_{n-2,2}(\Phi( \overline{x}_e)) &  \widetilde{H}_{n-2,n-2}(\Phi( \overline{x}_e))
\end{array}\right],
\end{equation}
the matrix representation of the inverse linear map, $\left[\operatorname{Hess}(\Phi_{\star}F_{\overrightarrow{\lambda_{e}}})(\Phi(\overline{x}_e))\right]^{-1}$,  with respect to the canonical basis of $\mathbb{R}^n$, where $\widetilde{H}_{k,l}(\Phi(\overline{x}_e))\in\mathcal{M}_{k,l}(\mathbb{R})$, $k,l\in\{2,n-2\}$, stand for the corresponding decomposition blocks. Consequently, the following relations hold true:
\begin{align}\label{invbloc}
\begin{split}
&H_{2,2}(\Phi(\overline{x}_e))\cdot\widetilde{H}_{2,n-2}(\Phi(\overline{x}_e))+ H_{2,n-2}(\Phi(\overline{x}_e))\cdot\widetilde{H}_{n-2,n-2}(\Phi(\overline{x}_e)) = O_{2,n-2},\\
&H_{n-2,2}(\Phi(\overline{x}_e))\cdot\widetilde{H}_{2,n-2}(\Phi(\overline{x}_e))+ H_{n-2,n-2}(\Phi(\overline{x}_e))\cdot\widetilde{H}_{n-2,n-2}(\Phi(\overline{x}_e)) = I_{n-2}.
\end{split}
\end{align}
Using the matrix representation \eqref{hess2}, and taking into account that 
\begin{equation}\label{casgrad}
\nabla(\Phi_{\star}C_i)(\Phi(\overline{x}_e))=\dfrac{\partial}{\partial y_{i+2}}|_{\Phi(\overline{x}_e)}\in T_{\Phi(\overline{x}_e)}W^{\prime}=T_{\Phi(\overline{x}_e)}\mathbb{R}^{n}\cong e_{i+2}\in\mathbb{R}^{n},
\end{equation}
for each $i\in\{1,\dots, n-2\}$, we obtain the following expression for the third factor of the r.h.s. of the formula \eqref{ifi}:
\begin{align}\label{innerok}
\begin{split}
&<[\operatorname{Hess}(\Phi_{\star}F_{\overrightarrow{\lambda_{e}}})(\Phi(\overline{x}_e))]^{-1}\cdot\nabla(\Phi_{\star} C_1) (\Phi(\overline{x}_e))\wedge\dots \wedge [\operatorname{Hess}(\Phi_{\star}F_{\overrightarrow{\lambda_{e}}})(\Phi(\overline{x}_e))]^{-1}\\
&\cdot\nabla(\Phi_{\star} C_{n-2}) (\Phi(\overline{x}_e)), \nabla (\Phi_{\star}C_1) (\Phi(\overline{x}_e))\wedge \dots \wedge \nabla(\Phi_{\star} C_{n-2}) (\Phi(\overline{x}_e))>_{n-2}\\
&=\det\left(\left[ \langle [\operatorname{Hess}(\Phi_{\star}F_{\overrightarrow{\lambda_{e}}})(\Phi(\overline{x}_e))]^{-1}\cdot\nabla(\Phi_{\star} C_i) (\Phi(\overline{x}_e)),\nabla(\Phi_{\star} C_{j}) (\Phi(\overline{x}_e)) \rangle \right]_{1\leq i,j\leq n-2}\right)\\
&=\det (\widetilde{H}_{n-2,n-2}(\Phi (\overline{x}_e)))= \det \left(
\left[
\begin{array}{c|c}
I_2 & \widetilde{H}_{2,n-2}(\Phi(\overline{x}_e))\\ 
\hline
O_{n-2,2} &  \widetilde{H}_{n-2,n-2}(\Phi(\overline{x}_e))
\end{array}\right]
\right).
\end{split}
\end{align}
Hence, using the relations \eqref{hess1}, \eqref{innerok}, and \eqref{invbloc}, the formula \eqref{ifi} becomes
\begin{align*}
\begin{split}
&\mathcal{I}_{\Phi_{\star}X}(\Phi(\overline{x}_e))=\nu_{\Phi}^2(\Phi(\overline{x}_e))\cdot \det\left(\left[
\begin{array}{c|c}
H_{2,2}(\Phi(\overline{x}_e)) & H_{2,n-2}(\Phi(\overline{x}_e))\\ 
\hline
H_{n-2,2}(\Phi(\overline{x}_e)) &  H_{n-2,n-2}(\Phi(\overline{x}_e))
\end{array}\right] \right)\\
& \cdot\det \left( \left[
\begin{array}{c|c}
I_2 & \widetilde{H}_{2,n-2}(\Phi(\overline{x}_e))\\ 
\hline
O_{n-2,2} &  \widetilde{H}_{n-2,n-2}(\Phi(\overline{x}_e))
\end{array}\right]\right)\\
&=\nu_{\Phi}^2(\Phi(\overline{x}_e))\\
&\cdot\det\left( \left[
\begin{array}{c|c}
H_{2,2}(\Phi(\overline{x}_e)) & H_{2,2}(\Phi(\overline{x}_e))\cdot\widetilde{H}_{2,n-2}(\Phi(\overline{x}_e))+ H_{2,n-2}(\Phi(\overline{x}_e))\cdot\widetilde{H}_{n-2,n-2}(\Phi(\overline{x}_e))\\ 
\hline
H_{n-2,2}(\Phi(\overline{x}_e)) & H_{n-2,2}(\Phi(\overline{x}_e))\cdot\widetilde{H}_{2,n-2}(\Phi(\overline{x}_e))+ H_{n-2,n-2}(\Phi(\overline{x}_e))\cdot\widetilde{H}_{n-2,n-2}(\Phi(\overline{x}_e))
\end{array}\right]\right)\\
&=\nu_{\Phi}^2(\Phi(\overline{x}_e))\cdot\det\left( \left[
\begin{array}{c|c}
H_{2,2}(\Phi(\overline{x}_e)) & O_{2,n-2}\\ 
\hline
H_{n-2,2}(\Phi(\overline{x}_e)) & I_{n-2}
\end{array}\right]\right) =\nu_{\Phi}^2(\Phi(\overline{x}_e))\cdot\det\left( H_{2,2}(\Phi(\overline{x}_e))\right)\\
&=\nu^2_{\Phi}(\Phi(\overline{x}_e))\cdot\left( \dfrac{\partial^2 (\Phi_{\star}H)}{\partial y_1^2}(\Phi(\overline{x}_e)) \cdot\dfrac{\partial^2 (\Phi_{\star}H)}{\partial y_2^2}(\Phi(\overline{x}_e))-\left( \dfrac{\partial^2 (\Phi_{\star}H)}{\partial y_1 \partial y_2}(\Phi(\overline{x}_e))\right)^2\right),
\end{split}
\end{align*}
which is exactly the formula \eqref{ifeq}.

Consequently, the expression \eqref{charpol1} of the characteristic polynomial of $\mathfrak{L}^{\Phi_{\star}X}(\Phi(\overline{x}_e))$ becomes
\begin{align*}
\begin{split}
p_{\mathfrak{L}^{\Phi_{\star}X}(\Phi(\overline{x}_e))}(\mu)&=(-\mu)^{n-2}\cdot\left( \mu^2 + \mathcal{I}_{\Phi_{\star}X}(\Phi(\overline{x}_e)) \right).
\end{split}
\end{align*}

As the characteristic polynomials of $\mathfrak{L}^{X}(\overline{x}_e)$ and $\mathfrak{L}^{\Phi_{\star}X}(\Phi(\overline{x}_e))$ are equal (via Corollary \eqref{spectrum}), and $\mathcal{I}_{X}(\overline{x}_e)=\mathcal{I}_{\Phi_{\star}X}(\Phi(\overline{x}_e))$ (via Theorem \eqref{frstmain}), we obtain that
\begin{align*}
p_{\mathfrak{L}^{X}(\overline{x}_e)}(\mu)&=(-\mu)^{n-2}\cdot\left( \mu^2 + \mathcal{I}_{X}(\overline{x}_e) \right).
\end{align*}
Consequently, if $\mathcal{I}_{X}(\overline{x}_e)<0$ then the linearization $\mathfrak{L}^{X}(\overline{x}_e)$ contains an eigenvalue with strictly positive real part, and hence the equilibrium $\overline{x}_e$ of the vector field $X$ is unstable.
\end{proof}

Next we show that if $\mathcal{I}_{X}(\overline{x}_e)>0$, then the non-degenerate regular equilibrium point $\overline{x}_e$ is Lyapunov stable. In order to do that, we will use the \textit{Arnold stability test}.

Before stating the stability result, let us recall the \textit{Arnold stability test}. For more details regarding the proof of the Arnold stability test see, e.g., \cite{arnstb}.
\begin{theorem}[\cite{arnstb}](Arnold stability test)\label{AST}
Let $M$ be a smooth manifold, $X\in\mathfrak{X}(M)$ a smooth vector field, and $m\in M$ an equilibrium point of $X$. Assume that $X$ admits $k+1$ first integrals, $H, C_1,\dots,C_k \in \mathcal{C}^{2}(M,\mathbb{R})$, and moreover, there exist some real numbers $\lambda^{e}_1,\dots,\lambda^{e}_k \in\mathbb{R}$ such that
\begin{align}\label{condAr}
\begin{split}
&\mathrm{d}(H+\lambda^{e}_1 C_1 +\dots + \lambda^{e}_k C_k)(m)=0,~ \text{and}\\
&\mathrm{d}^2(H+\lambda^{e}_1 C_1 +\dots + \lambda^{e}_k C_k)(m)|_{W\times W} \ \textit{is positive or negative definite},
\end{split}
\end{align}
where $W:=\ker \mathrm{d}C_1 (m)\cap \dots \cap \ker \mathrm{d}C_k (m)$. Then the equilibrium point $m$ is Lyapunov stable.
\end{theorem}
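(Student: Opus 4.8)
The statement is the classical Arnold (energy--Casimir) stability theorem, and the plan is to run the standard argument: out of the prescribed conserved quantities manufacture a single first integral of $X$ that serves as a strict Lyapunov function at $m$. Put $F:=H+\lambda^{e}_1 C_1+\dots+\lambda^{e}_k C_k$; by hypothesis $\mathrm{d}F(m)=0$ and, after replacing $F$ by $-F$ if necessary, $\mathrm{d}^2F(m)|_{W\times W}$ is positive definite, where $W=\ker\mathrm{d}C_1(m)\cap\dots\cap\ker\mathrm{d}C_k(m)$. The obstruction to using $F$ directly is that $\mathrm{d}^2F(m)$ need not be definite on all of $T_mM$, only on $W$. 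The remedy is to add a correction built purely from the Casimirs: for a parameter $c>0$ set
\[
G_c:=F+\frac{c}{2}\sum_{i=1}^{k}\bigl(C_i-C_i(m)\bigr)^2 .
\]
Since each $C_i$ is a first integral of $X$, so is every smooth function of $H,C_1,\dots,C_k$; hence $G_c$ is again a $C^2$ first integral of $X$.

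The first step is to record the differential and the Hessian of $G_c$ at $m$. Because $C_i-C_i(m)$ vanishes at $m$, the quadratic terms contribute nothing to the differential, so $\mathrm{d}G_c(m)=\mathrm{d}F(m)=0$; and since $m$ is a critical point, each $\mathrm{d}^2\bigl[\tfrac{c}{2}(C_i-C_i(m))^2\bigr](m)$ reduces to the rank-one form $c\,\mathrm{d}C_i(m)\otimes\mathrm{d}C_i(m)$ (the contribution of $\mathrm{d}^2C_i$ drops out). Therefore
\[
\mathrm{d}^2G_c(m)(v,v)=\mathrm{d}^2F(m)(v,v)+c\sum_{i=1}^{k}\bigl(\mathrm{d}C_i(m)\cdot v\bigr)^2,\qquad v\in T_mM .
\]

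The crux is then a purely linear-algebraic penalization lemma: if a symmetric bilinear form $B$ on a finite-dimensional space $V$ is positive definite on $W:=\bigcap_{i}\ker\ell_i$ for given linear functionals $\ell_1,\dots,\ell_k$, then there exists $c_0>0$ such that $B+c\sum_i\ell_i\otimes\ell_i$ is positive definite on all of $V$ for every $c\ge c_0$. I would prove this by contradiction and compactness (fixing any inner product on $V$): if it failed, choose unit vectors $v_n$ with $B(v_n,v_n)+n\sum_i\ell_i(v_n)^2\le 0$; boundedness of $B$ on the unit sphere forces $\sum_i\ell_i(v_n)^2\to 0$; a subsequential limit $v$ then satisfies $\|v\|=1$ and $\ell_i(v)=0$ for all $i$, i.e.\ $v\in W$, while passing to the limit in $B(v_n,v_n)\le 0$ gives $B(v,v)\le 0$, contradicting positive-definiteness of $B$ on $W$. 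Applying this with $B=\mathrm{d}^2F(m)$, $V=T_mM$, $\ell_i=\mathrm{d}C_i(m)$ produces $c_0>0$ such that $\mathrm{d}^2G_c(m)$ is positive definite on $T_mM$ for every $c\ge c_0$.

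Finally I would assemble the pieces. Fix $c\ge c_0$. Then $G_c$ is a $C^2$ first integral of $X$ with $\mathrm{d}G_c(m)=0$ and $\mathrm{d}^2G_c(m)$ positive definite, so $m$ is a strict local minimum of $G_c$; consequently $G_c-G_c(m)$ is a strict Lyapunov function for $X$ near $m$ (positive off $m$, vanishing at $m$, constant along trajectories), and the classical Lyapunov stability theorem yields that $m$ is Lyapunov stable. (If $\mathrm{d}^2F(m)|_{W\times W}$ is negative definite, one either runs the argument with $-F$ in place of $F$ or keeps $F$ and subtracts the correction, obtaining a strict local maximum of the resulting conserved quantity, which serves equally well.) I expect the only genuinely delicate point to be the penalization lemma --- in particular the compactness argument guaranteeing that a single constant $c$ works uniformly on the unit sphere --- together with the small bookkeeping that at a critical point the Hessian of the squared Casimir corrections collapses to the rank-one forms $\mathrm{d}C_i(m)\otimes\mathrm{d}C_i(m)$; everything else is the textbook Lyapunov-function mechanism.
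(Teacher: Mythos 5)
The paper does not prove this theorem: it is quoted verbatim from Arnold's 1965 paper, with the reader referred to \cite{arnstb} for the proof, so there is no in-paper argument to compare against. Your proposal is the standard energy--Casimir (penalization) proof and it is correct. The two points that actually require care are both handled properly: (i) on a general manifold the Hessian of a function is only invariantly defined at a critical point, and you verify that $m$ is a critical point both of $F$ and of each $\bigl(C_i-C_i(m)\bigr)^2$, so that $\mathrm{d}^2G_c(m)$ makes sense and the squared corrections contribute exactly the rank-one forms $c\,\mathrm{d}C_i(m)\otimes\mathrm{d}C_i(m)$; (ii) the penalization lemma, where the compactness argument on the unit sphere correctly produces a single constant $c_0$ making $\mathrm{d}^2F(m)+c\sum_i\mathrm{d}C_i(m)\otimes\mathrm{d}C_i(m)$ definite on all of $T_mM$ for $c\ge c_0$. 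From there, a $C^2$ first integral with a nondegenerate strict local extremum at $m$ is a Lyapunov function (its derivative along the flow is identically zero, not merely nonpositive), and the classical Lyapunov theorem gives stability. The sign normalization via $-F$ is harmless since $-H$ and $-C_i$ are again first integrals. I see no gap.
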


Let us state now the main stability result of this article, which together with Theorem \eqref{spec1} generates a new criterion to test the stability of non-degenerate equilibrium states of completely integrable systems.

\begin{theorem}\label{mainthm2}
Let $\overline{x}_e \in \mathcal{E}^{C_{n-1}}_{C_1 ,\dots, C_{n-2}}$ be a non-degenerate regular equilibrium point of the vector field $X$ written in the form \eqref{hax}, and let $\mathcal{I}_{X}(\overline{x}_e)$ be the associated scalar quantity introduced in Definition \eqref{invi}. If $\mathcal{I}_{X}(\overline{x}_e)>0$, then the equilibrium state $\overline{x}_e$ is Lyapunov stable.
\end{theorem}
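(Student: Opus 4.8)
The plan is to verify the hypotheses of the Arnold stability test (Theorem \ref{AST}) for the vector field $X$ at $\overline{x}_e$, with the $n-1$ first integrals $H=C_{n-1}$ and $C_1,\dots,C_{n-2}$, the real numbers $\lambda^{e}_1,\dots,\lambda^{e}_{n-2}$, and the subspace $W:=\ker \mathrm{d}C_1(\overline{x}_e)\cap\dots\cap\ker \mathrm{d}C_{n-2}(\overline{x}_e)$. The first condition in \eqref{condAr}, namely $\mathrm{d}F_{\overrightarrow{\lambda_{e}}}(\overline{x}_e)=0$, already holds by the very choice of $\overrightarrow{\lambda_{e}}$. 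So everything reduces to proving that $\mathrm{d}^2 F_{\overrightarrow{\lambda_{e}}}(\overline{x}_e)|_{W\times W}$ is positive or negative definite; equivalently (recall that $\nabla C_1(\overline{x}_e),\dots,\nabla C_{n-2}(\overline{x}_e)$ are independent, so $\dim_{\mathbb{R}}W=2$), that the restriction of the symmetric operator $\operatorname{Hess}F_{\overrightarrow{\lambda_{e}}}(\overline{x}_e)$ to the $2$-dimensional subspace $W$ is a definite quadratic form. Once this is established, Theorem \ref{AST} gives Lyapunov stability of $\overline{x}_e$ immediately.

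To compute that restriction I would reuse the Darboux picture from the proof of Theorem \ref{spec1}: choose a triple $(\Omega^{\prime},\Phi_1,\Phi_2)$ with $\nu\neq 0$ on $\Omega^{\prime}$ and $\Phi=(\Phi_1,\Phi_2,C_1,\dots,C_{n-2}):\Omega^{\prime}\to W^{\prime}$ a diffeomorphism. Since $\Phi_{\star}C_i=y_{i+2}$, the linear isomorphism $\mathrm{D}\Phi(\overline{x}_e)$ carries $W$ onto $\ker \mathrm{d}y_3\cap\dots\cap\ker \mathrm{d}y_n=\operatorname{span}_{\mathbb{R}}\{e_1,e_2\}$, and by the congruence formula \eqref{hesscomp} it intertwines the quadratic form $\operatorname{Hess}F_{\overrightarrow{\lambda_{e}}}(\overline{x}_e)$ on $W$ with the quadratic form $\operatorname{Hess}(\Phi_{\star}F_{\overrightarrow{\lambda_{e}}})(\Phi(\overline{x}_e))$ on $\operatorname{span}_{\mathbb{R}}\{e_1,e_2\}$; in particular definiteness of one is equivalent to definiteness of the other. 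By \eqref{relimp2} the latter restriction is precisely the symmetric block $H_{2,2}(\Phi(\overline{x}_e))=\left[\frac{\partial^2(\Phi_{\star}H)}{\partial y_i \partial y_j}(\Phi(\overline{x}_e))\right]_{1\le i,j\le 2}$ that already appeared in \eqref{hess1}.

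Now I would invoke the identity proved inside Theorem \ref{spec1}, namely $\mathcal{I}_{\Phi_{\star}X}(\Phi(\overline{x}_e))=\nu^2_{\Phi}(\Phi(\overline{x}_e))\cdot\det H_{2,2}(\Phi(\overline{x}_e))$, together with $\mathcal{I}_{\Phi_{\star}X}(\Phi(\overline{x}_e))=\mathcal{I}_{X}(\overline{x}_e)$ (Theorem \ref{frstmain}) and the fact that $\nu_{\Phi}(\Phi(\overline{x}_e))=\nu(\overline{x}_e)\operatorname{Jac}(\Phi)(\overline{x}_e)\neq 0$. Under the hypothesis $\mathcal{I}_{X}(\overline{x}_e)>0$ this forces $\det H_{2,2}(\Phi(\overline{x}_e))>0$. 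A real symmetric $2\times 2$ matrix with positive determinant has two real nonzero eigenvalues of the same sign, hence is positive definite or negative definite; so $H_{2,2}(\Phi(\overline{x}_e))$, and therefore $\operatorname{Hess}F_{\overrightarrow{\lambda_{e}}}(\overline{x}_e)|_{W\times W}$, is definite. Both conditions of \eqref{condAr} being met, Theorem \ref{AST} yields that $\overline{x}_e$ is Lyapunov stable.

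The argument is largely bookkeeping once the material of Sections 4 and 5 is available. The one point that needs genuine care — and which I expect to be the only real obstacle — is verifying cleanly that $\mathrm{D}\Phi(\overline{x}_e)$ identifies the \emph{intrinsic} subspace $W$ with $\operatorname{span}_{\mathbb{R}}\{e_1,e_2\}$ and that the congruence \eqref{hesscomp} preserves the \emph{signature} of the restricted Hessian, so that the sign datum packaged in $\mathcal{I}_{X}(\overline{x}_e)$ (a priori merely a determinant of a pairing of inverse-Hessian images of Casimir gradients) genuinely translates into definiteness of $\mathrm{d}^2 F_{\overrightarrow{\lambda_{e}}}(\overline{x}_e)$ along the symplectic-leaf direction, which is exactly what Arnold's test requires.
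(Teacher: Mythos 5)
Your proposal is correct and follows essentially the same route as the paper: both reduce the problem to Arnold's stability test and verify definiteness of the restricted Hessian by showing, in the Darboux coordinates of Theorem \ref{darbnf}, that $\nu^2_{\Phi}(\Phi(\overline{x}_e))\cdot\det H_{2,2}(\Phi(\overline{x}_e))=\mathcal{I}_{X}(\overline{x}_e)>0$ forces the symmetric $2\times 2$ block to be definite. The only (harmless) organizational difference is that the paper applies Arnold's test to $\Phi_{\star}X$ at $\Phi(\overline{x}_e)$ and transfers Lyapunov stability back through the conjugated flow, whereas you apply it directly to $X$ at $\overline{x}_e$ and instead transfer the Hessian computation via the congruence \eqref{hesscomp}, correctly noting that $\mathrm{D}\Phi(\overline{x}_e)$ maps $W$ onto $\operatorname{span}_{\mathbb{R}}\{e_1,e_2\}$ and that congruence preserves definiteness.
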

\begin{proof} Let us start by recalling from Theorem \eqref{darbnf} the existence of a smooth diffeomorphism $\Phi$ generated by a triple $(\Omega^{\prime},\Phi_1,\Phi_2)$ consisting of an open neighborhood $\Omega^{\prime}\subseteq \Omega$ of $\overline{x}_e$, and two smooth functions $\Phi_1,\Phi_2 \in\mathcal{C}^{\infty}(\Omega^{\prime},\mathbb{R})$, such that the map $\Phi:\Omega^{\prime}\rightarrow W^{\prime}=\Phi(\Omega^{\prime})$, given by $\Phi=(\Phi_1,\Phi_2,C_1,\dots,C_{n-2})$, is a smooth diffeomorphism, and $\nu(\overline{x})\neq 0$, for every $\overline{x}\in\Omega^{\prime}$.

Now, if $\{F_{t}^X\}_{t}$ stands for the flow of $X$, then the flow of $\Phi_{\star}X$ is given by $\{\Phi\circ F_{t}^X \circ \Phi^{-1}\}_{t}$, and so, the map $t\mapsto F_{t}^X (\overline{x})$, is the integral curve of $X$ starting from   $\overline{x}\in\Omega^{\prime}$, if and only if the map $t\mapsto \Phi (F_{t}^X (\overline{x}))$, is the integral curve of $\Phi_{\star}X$ starting from $\Phi(\overline{x})\in W^{\prime}:=\Phi(\Omega^{\prime})$. 

Consequently, the equilibrium state $\overline{x}_e$ of the vector field $X$ is Lyapunov stable if and only if the equilibrium state $\Phi(\overline{x}_e)$ of the vector field $\Phi_{\star}X$ is Lyapunov stable. 

Let us show now that the equilibrium state $\Phi(\overline{x}_e)$ of the vector field $\Phi_{\star}X$, verifies the conditions of  Arnold stability test, and hence is Lyapunov stable. 

In order to do that, recall from Theorem \eqref{tsv} that $\Phi_{\star}H$, $\Phi_{\star} C_1$, $\dots$, $\Phi_{\star}C_{n-2}\in \mathcal{C}^{\infty}(W^{\prime},\mathbb{R})$ are first integrals of the vector field $\Phi_{\star}X$. 

As $\overline{x}_e \in \mathcal{E}^{C_{n-1}}_{C_1 ,\dots, C_{n-2}}$ is a non-degenerate regular equilibrium point of system \eqref{hax}, there exists $\overrightarrow{\lambda_{e}}:=(\lambda^{e}_1, \dots,\lambda^{e}_{n-2})\in\mathbb{R}^{n-2}$ such that $\overline{x}_e$ is a non-degenerate critical point of the smooth function $F_{\overrightarrow{\lambda_{e}}}:=H+\lambda^{e}_1 C_1 +\dots + \lambda^{e}_{n-2} C_{n-2}$.

Next we prove that the smooth function $\Phi_{\star}F_{\overrightarrow{\lambda_{e}}}:=\Phi_{\star}H + \lambda^{e}_1 \Phi_{\star}C_1 +\dots +\lambda^{e}_{n-2} \Phi_{\star}C_{n-2},$ verifies the conditions of Arnold stability test.

First condition of Arnold's stability test \eqref{condAr} follows directly from Proposition \eqref{lemma1}, since $\Phi(\overline{x}_e)$ is a non-degenerate critical point of $\Phi_{\star}F_{\overrightarrow{\lambda_{e}}}$.

In order to verify the second condition of Arnold's stability test, note that the gradient defining relation, i.e.,
$$
\langle\nabla(\Phi_{\star}C_i)(\Phi(\overline{x}_e)),u\rangle=\mathrm{d}(\Phi_{\star}C_i)(\Phi(\overline{x}_e))\cdot u, \ \text{for all} \ u\in T_{\Phi(\overline{x}_e)}W^{\prime}=T_{\Phi(\overline{x}_e)}\mathbb{R}^{n}\cong \mathbb{R}^{n},
$$
valid for each $i\in\{1,\dots, n-2\}$, together with the equality \eqref{casgrad}, i.e., for each $i\in\{1,\dots, n-2\}$, 
\begin{equation*}
\nabla(\Phi_{\star}C_i)(\Phi(\overline{x}_e))=\dfrac{\partial}{\partial y_{i+2}}|_{\Phi(\overline{x}_e)}\in T_{\Phi(\overline{x}_e)}W^{\prime}=T_{\Phi(\overline{x}_e)}\mathbb{R}^{n}\cong e_{i+2}\in\mathbb{R}^{n},
\end{equation*}
imply that 
\begin{align*}
W:&=\ker \mathrm{d}(\Phi_{\star}C_1 )(\Phi(\overline{x}_e))\cap \dots \cap \ker \mathrm{d}(\Phi_{\star}C_{n-2} )(\Phi(\overline{x}_e))\\
&=\operatorname{span}_{\mathbb{R}}\left\{ \dfrac{\partial}{\partial y_{1}}|_{\Phi(\overline{x}_e)},  \dfrac{\partial}{\partial y_{2}}|_{\Phi(\overline{x}_e)}\right\}\cong \operatorname{span}_{\mathbb{R}}\left\{ e_1 , e_2 \right\}.
\end{align*}
Taking into account the formula \eqref{relimp2}, one obtains the following matrix representation of the bilinear form $\mathrm{d}^2 (\Phi_{\star}F_{\overrightarrow{\lambda_{e}}})(\Phi(\overline{x}_e))|_{W\times W}$ with respect to the basis $\left\{ e_1 , e_2 \right\}$ of $W$:
\begin{align}\label{arn2}
\begin{split}
\mathrm{d}^2 (\Phi_{\star}F_{\overrightarrow{\lambda_{e}}})(\Phi(\overline{x}_e))|_{W\times W}&=
\begin{bmatrix}
I_2 \ | \ O_{2,n-2}\\
\end{bmatrix}\cdot   
\left[
\begin{array}{c|c}
H_{2,2}(\Phi(\overline{x}_e)) & H_{2,n-2}(\Phi(\overline{x}_e))\\ 
\hline
H_{n-2,2}(\Phi(\overline{x}_e)) &  H_{n-2,n-2}(\Phi(\overline{x}_e))
\end{array}\right]\cdot
\begin{bmatrix}
I_2 \\ 
\hline
O_{n-2,2}
\end{bmatrix}\\
&=H_{2,2}(\Phi(\overline{x}_e)),
\end{split}
\end{align}
where the blocks $H_{k,l}(\Phi(\overline{x}_e))\in\mathcal{M}_{k,l}(\mathbb{R})$, $k,l\in\{2,n-2\}$, are given by
\begin{align*}
H_{2,2}(\Phi(\overline{x}_e)):&=\left[ \dfrac{\partial^2 (\Phi_{\star}H)}{\partial y_i \partial y_j}(\Phi(\overline{x}_e))\right]_{1\leq i,j\leq 2},  H_{2,n-2}(\Phi(\overline{x}_e)):=\left[ \dfrac{\partial^2 (\Phi_{\star}H)}{\partial y_i \partial y_j}(\Phi(\overline{x}_e)) \right]_{1\leq i\leq 2, 3\leq j\leq n}\\
H_{n-2,2}(\Phi(\overline{x}_e)):&=\left[ \dfrac{\partial^2 (\Phi_{\star}H)}{\partial y_i \partial y_j}(\Phi(\overline{x}_e))\right]_{3\leq i\leq n, 1\leq j\leq 2},  H_{n-2,n-2}(\Phi(\overline{x}_e)):=\left[ \dfrac{\partial^2 (\Phi_{\star}H)}{\partial y_i \partial y_j}(\Phi(\overline{x}_e)) \right]_{3\leq i,j\leq n}.
\end{align*}

Let us show now that $\det(H_{2,2}(\Phi(\overline{x}_e)))>0$. Indeed, from the relation \eqref{ifeq}, the Theorem \eqref{frstmain}, and the hypothesis $\mathcal{I}_{X}(\overline{x}_e)>0$, we obtain that
\begin{equation}\label{cimpo}
\nu^2_{\Phi}(\Phi(\overline{x}_e))\cdot \det(H_{2,2}(\Phi(\overline{x}_e)))=\mathcal{I}_{\Phi_{\star}X}(\Phi(\overline{x}_e))=\mathcal{I}_{X}(\overline{x}_e)>0.
\end{equation}
Since $\nu_{\Phi}(\Phi(\overline{x}_e))\neq 0$ (from Proposition \eqref{lemma1}), the above relation implies that  $$\det(H_{2,2}(\Phi(\overline{x}_e)))>0.$$

As $\mathrm{d}^2 (\Phi_{\star}F_{\overrightarrow{\lambda_{e}}})(\Phi(\overline{x}_e))|_{W\times W}=H_{2,2}(\Phi(\overline{x}_e))\in\mathcal{M}_{2}(\mathbb{R})$ is a $2 \times 2$ real symmetric matrix, this is positive or negative definite if and only if its determinant is strictly positive. From the relation \eqref{cimpo}, this condition is equivalent to $\mathcal{I}_{X}(\overline{x}_e)>0$, and hence the second condition of Arnold's stability test \eqref{condAr} is verified too. Consequently, as both conditions of Arnold's stability test are verified, we obtain the conclusion.

In order to complete the proof, we need to show that $H_{2,2}(\Phi(\overline{x}_e))\in\mathcal{M}_{2}(\mathbb{R})$ is positive or negative definite if and only if its determinant is strictly positive. Indeed, $H_{2,2}(\Phi(\overline{x}_e))$ is positive definite if and only if $\dfrac{\partial^2 (\Phi_{\star}H)}{\partial y^2_1}(\Phi(\overline{x}_e))>0$ and $\det(H_{2,2}(\Phi(\overline{x}_e)))>0$. On the other hand, $H_{2,2}(\Phi(\overline{x}_e))$ is negative definite if and only if $\dfrac{\partial^2 (\Phi_{\star}H)}{\partial y^2_1}(\Phi(\overline{x}_e))<0$ and $\det(H_{2,2}(\Phi(\overline{x}_e)))>0$. Hence, if $H_{2,2}(\Phi(\overline{x}_e))$ is positive or negative definite, then $\det(H_{2,2}(\Phi(\overline{x}_e)))>0$. Conversely, let us show first that if $\det(H_{2,2}(\Phi(\overline{x}_e)))>0$, then $\dfrac{\partial^2 (\Phi_{\star}H)}{\partial y^2_1}(\Phi(\overline{x}_e))\neq 0$. Indeed, assuming $\dfrac{\partial^2 (\Phi_{\star}H)}{\partial y^2_1}(\Phi(\overline{x}_e)) = 0$, it follows that
\begin{align*}
\det(H_{2,2}(\Phi(\overline{x}_e)))&=\det\left( \begin{bmatrix}
0 & \dfrac{\partial^2 (\Phi_{\star}H)}{\partial y_1 \partial y_2}(\Phi(\overline{x}_e))\\
\dfrac{\partial^2 (\Phi_{\star}H)}{\partial y_2 \partial y_1}(\Phi(\overline{x}_e)) & \dfrac{\partial^2 (\Phi_{\star}H)}{\partial y^2_2}(\Phi(\overline{x}_e))
\end{bmatrix}\right)\\
&= -\left( \dfrac{\partial^2 (\Phi_{\star}H)}{\partial y_1 \partial y_2}(\Phi(\overline{x}_e))\right)^2\leq 0,
\end{align*}
which contradicts the relation $\det(H_{2,2}(\Phi(\overline{x}_e)))>0$. Consequently, we distinguish between two possibilities, namely, either $\dfrac{\partial^2 (\Phi_{\star}H)}{\partial y^2_1}(\Phi(\overline{x}_e))> 0$ (which together with $\det(H_{2,2}(\Phi(\overline{x}_e)))>0$ implies that $H_{2,2}(\Phi(\overline{x}_e))$ is positive definite), or $\dfrac{\partial^2 (\Phi_{\star}H)}{\partial y^2_1}(\Phi(\overline{x}_e))< 0$ (which together with $\det(H_{2,2}(\Phi(\overline{x}_e)))>0$ implies that $H_{2,2}(\Phi(\overline{x}_e))$ is negative definite). Hence, if $\det(H_{2,2}(\Phi(\overline{x}_e)))>0$, then $H_{2,2}(\Phi(\overline{x}_e))$ is positive or negative definite.
\end{proof}

Let us state now the main result of this article, which provides a stability criterion for non-degenerate regular equilibrium states of the Hamiltonian realizations of completely integrable systems.
\begin{theorem}\label{mainthm}
Let $\overline{x}_e \in \mathcal{E}^{C_{n-1}}_{C_1 ,\dots, C_{n-2}}$ be a non-degenerate regular equilibrium point of the vector field $X$ realized as the Hamiltonian dynamical system \eqref{systy}, and let $\mathcal{I}_{X}(\overline{x}_e)$ be the associated scalar quantity introduced in Definition \eqref{invi}. Then the following implications hold true:
\begin{enumerate}
\item if $\mathcal{I}_{X}(\overline{x}_e)<0$, then the equilibrium state $\overline{x}_e$ is unstable,
\item if $\mathcal{I}_{X}(\overline{x}_e)>0$, then the equilibrium state $\overline{x}_e$ is Lyapunov stable.
\end{enumerate}
\end{theorem}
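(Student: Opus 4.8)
The statement is a direct consequence of the two results established above, so the plan is simply to assemble them. First I would recall that for a non-degenerate regular equilibrium point the scalar $\mathcal{I}_{X}(\overline{x}_e)$ is well defined by Definition \ref{invi}, and that by Theorem \ref{frstmain} its value is unchanged under any local diffeomorphism; in particular it may be computed in the Darboux chart $\Phi=(\Phi_1,\Phi_2,C_1,\dots,C_{n-2})$ provided by Theorem \ref{darbnf}, which is what lets one reduce everything to the two-dimensional symplectic block. Note that the hypotheses assumed here — $\overline{x}_e\in\mathcal{E}^{C_{n-1}}_{C_1,\dots,C_{n-2}}$ non-degenerate, hence $\nu(\overline{x}_e)\neq 0$, $\nabla C_1(\overline{x}_e)\wedge\dots\wedge\nabla C_{n-2}(\overline{x}_e)\neq 0$, and $\det(\operatorname{Hess}F_{\overrightarrow{\lambda_{e}}}(\overline{x}_e))\neq 0$ — are precisely those needed to invoke Theorems \ref{spec1} and \ref{mainthm2}, and also guarantee, by continuity, the existence of an open neighbourhood $\Omega^{\prime}$ of $\overline{x}_e$ on which $\nu\neq 0$ and the Darboux chart exists.

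For item (1), I would invoke Theorem \ref{spec1}: passing to the Darboux normal form \eqref{darb} around $\overline{x}_e$ and using the diffeomorphism-invariance of the characteristic polynomial (Corollary \ref{spectrum}), the linearization $\mathfrak{L}^{X}(\overline{x}_e)$ has characteristic polynomial $p_{\mathfrak{L}^{X}(\overline{x}_e)}(\mu)=(-\mu)^{n-2}\bigl(\mu^2+\mathcal{I}_{X}(\overline{x}_e)\bigr)$. Consequently, if $\mathcal{I}_{X}(\overline{x}_e)<0$ then $\mu=\sqrt{-\mathcal{I}_{X}(\overline{x}_e)}>0$ is an eigenvalue of $\mathfrak{L}^{X}(\overline{x}_e)$, so $\overline{x}_e$ is spectrally unstable, and since Lyapunov stability implies spectral stability, $\overline{x}_e$ is unstable.

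For item (2), I would invoke Theorem \ref{mainthm2}: working again in the Darboux chart, $\Phi_{\star}H,\Phi_{\star}C_1,\dots,\Phi_{\star}C_{n-2}$ are first integrals of $\Phi_{\star}X$ (Theorem \ref{tsv}) and, by Proposition \ref{lemma1}, $\Phi(\overline{x}_e)$ is a critical point of $\Phi_{\star}F_{\overrightarrow{\lambda_{e}}}$. The subspace $W=\ker\mathrm{d}(\Phi_{\star}C_1)(\Phi(\overline{x}_e))\cap\dots\cap\ker\mathrm{d}(\Phi_{\star}C_{n-2})(\Phi(\overline{x}_e))$ is the coordinate plane $\operatorname{span}_{\mathbb{R}}\{e_1,e_2\}$, and the restriction of $\mathrm{d}^2(\Phi_{\star}F_{\overrightarrow{\lambda_{e}}})(\Phi(\overline{x}_e))$ to $W$ is the $2\times 2$ block $H_{2,2}(\Phi(\overline{x}_e))$, whose determinant satisfies $\nu_{\Phi}^2(\Phi(\overline{x}_e))\cdot\det(H_{2,2}(\Phi(\overline{x}_e)))=\mathcal{I}_{\Phi_{\star}X}(\Phi(\overline{x}_e))=\mathcal{I}_{X}(\overline{x}_e)>0$. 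Since a $2\times 2$ real symmetric matrix with strictly positive determinant is either positive or negative definite, the second condition of the Arnold stability test (Theorem \ref{AST}) holds, hence $\Phi(\overline{x}_e)$ is Lyapunov stable, and therefore so is $\overline{x}_e$.

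Since both cases have already been carried out in full in Theorems \ref{spec1} and \ref{mainthm2}, there is no genuine obstacle here; the proof of Theorem \ref{mainthm} is obtained by simply restating item (1) as Theorem \ref{spec1} and item (2) as Theorem \ref{mainthm2}. The only point worth a line of verification is that the diffeomorphism-invariance identity $\mathcal{I}_{\Phi_{\star}X}(\Phi(\overline{x}_e))=\mathcal{I}_{X}(\overline{x}_e)$ used in both arguments indeed applies to the Darboux chart, which it does since $\Phi$ is a smooth diffeomorphism defined on a neighbourhood of $\overline{x}_e$ and Theorem \ref{frstmain} is stated for an arbitrary such $\Phi$.
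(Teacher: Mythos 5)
Your proposal is correct and matches the paper's proof, which likewise obtains the result as an immediate consequence of Theorem \ref{spec1} (for the instability case) and Theorem \ref{mainthm2} (for the stability case). The additional recapitulation of those two arguments is accurate but not needed.
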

\begin{proof}
The proof follows directly from Theorem \eqref{spec1} and Theorem \eqref{mainthm2}.
\end{proof}
\begin{remark}
It remains an open problem to provide minimal additional conditions which imply stability/instability of the equilibrium point $\overline{x}_e$, in the degenerate case $\mathcal{I}_{X}(\overline{x}_e)=0$.
\end{remark}

Let us present now a direct consequence of Theorem \eqref{mainthm} and Theorem \eqref{ift}, regarding the stability properties 
of non-degenerate regular equilibrium states located nearby a fixed non-degenerate regular equilibrium point $\overline{x}_e$ such that $\mathcal{I}_{X}(\overline{x}_e)\neq 0$.

\begin{theorem}
Let $\overline{x}_e \in \mathcal{E}^{C_{n-1}}_{C_1 ,\dots, C_{n-2}}$ be a non-degenerate regular equilibrium point of the vector field $X$ realized as the Hamiltonian dynamical system \eqref{systy}. Let $\overrightarrow{\lambda_{e}}:=(\lambda^{e}_1,\dots,\lambda^{e}_{n-2})\in\mathbb{R}^{n-2}$ be such that $\overline{x}_e$ is a non-degenerate critical point of the smooth function $F_{\overrightarrow{\lambda_{e}}}:=C_{n-1}+\lambda^{e}_1 C_1 +\dots + \lambda^{e}_{n-2} C_{n-2}$.
Then the following implications hold true:
\begin{enumerate}
\item if $\mathcal{I}_{X}(\overline{x}_e)<0$, then there exist $V\subseteq \mathbb{R}^{n-2}$, an open neighborhood of $\overrightarrow{\lambda_{e}}$, $U\subseteq \Omega$, an open neighborhood of $\overline{x}_e$, a smooth function $\overline{x}: V\rightarrow U$ such that $\overline{x}(\overrightarrow{\lambda_{e}})=\overline{x}_e$, and for every $\overrightarrow{\lambda}\in V$, $\overline{x}(\overrightarrow{\lambda}) \in \mathcal{E}^{C_{n-1}}_{C_1 ,\dots, C_{n-2}}\cap U$ is an unstable non-degenerate regular equilibrium state of the Hamiltonian system \eqref{systy}. 
\item if $\mathcal{I}_{X}(\overline{x}_e)>0$, then there exist $V\subseteq \mathbb{R}^{n-2}$, an open neighborhood of $\overrightarrow{\lambda_{e}}$, $U\subseteq \Omega$, an open neighborhood of $\overline{x}_e$, a smooth function $\overline{x}: V\rightarrow U$ such that $\overline{x}(\overrightarrow{\lambda_{e}})=\overline{x}_e$, and for every $\overrightarrow{\lambda}\in V$, $\overline{x}(\overrightarrow{\lambda}) \in \mathcal{E}^{C_{n-1}}_{C_1 ,\dots, C_{n-2}} \cap U$ is a Lyapunov stable non-degenerate regular equilibrium state of the Hamiltonian system \eqref{systy}. 
\end{enumerate}
\end{theorem}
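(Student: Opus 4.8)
The plan is to combine Theorem~\eqref{ift}, which manufactures a smooth family of nearby non-degenerate regular equilibria, with the stability criterion of Theorem~\eqref{mainthm}; the link between the two is the observation that the invariant $\mathcal{I}_X$ depends continuously on the parameter $\overrightarrow{\lambda}$. First I would apply Theorem~\eqref{ift} to obtain an open neighborhood $V_0\subseteq\mathbb{R}^{n-2}$ of $\overrightarrow{\lambda_e}$, an open neighborhood $U\subseteq\Omega$ of $\overline{x}_e$, and a smooth map $\overline{x}\colon V_0\to U$ with $\overline{x}(\overrightarrow{\lambda_e})=\overline{x}_e$ such that, for every $\overrightarrow{\lambda}\in V_0$, the point $\overline{x}(\overrightarrow{\lambda})\in\mathcal{E}^{C_{n-1}}_{C_1,\dots,C_{n-2}}\cap U$ is a non-degenerate regular equilibrium state of \eqref{systy}.

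The key structural remark is that the deformed equilibrium $\overline{x}(\overrightarrow{\lambda})$ comes with its Lagrange multiplier vector built in: by the construction in the proof of Theorem~\eqref{ift}, the relation $\mathcal{F}(\overline{x}(\overrightarrow{\lambda}),\overrightarrow{\lambda})=0$ says precisely that $\nabla F_{\overrightarrow{\lambda}}(\overline{x}(\overrightarrow{\lambda}))=0$, where $F_{\overrightarrow{\lambda}}:=C_{n-1}+\lambda_1 C_1+\dots+\lambda_{n-2}C_{n-2}$, and the linear independence of $\nabla C_1(\overline{x}(\overrightarrow{\lambda})),\dots,\nabla C_{n-2}(\overline{x}(\overrightarrow{\lambda}))$ forces this multiplier vector to be the unique one (locally) making $\overline{x}(\overrightarrow{\lambda})$ a critical point of an affine combination of $C_{n-1},C_1,\dots,C_{n-2}$. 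Hence $\mathcal{I}_X(\overline{x}(\overrightarrow{\lambda}))$ is unambiguously defined by Definition~\eqref{invi}, using exactly the parameter $\overrightarrow{\lambda}$.

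Next I would check that the scalar function $g\colon V_0\to\mathbb{R}$, $g(\overrightarrow{\lambda}):=\mathcal{I}_X(\overline{x}(\overrightarrow{\lambda}))$, is continuous. Reading off Definition~\eqref{invi}, $g(\overrightarrow{\lambda})$ is assembled from $\nu^2(\overline{x}(\overrightarrow{\lambda}))$, the determinant of $\operatorname{Hess}F_{\overrightarrow{\lambda}}(\overline{x}(\overrightarrow{\lambda}))$, the inverse of that Hessian (well defined and continuous in its entries by Cramer's rule, since the determinant is nonzero on $V_0$), the gradients $\nabla C_i(\overline{x}(\overrightarrow{\lambda}))$, and the $(n-2)$-fold inner product $\langle\cdot,\cdot\rangle_{n-2}$, which is the determinant of a Gram matrix and hence polynomial in its arguments; every ingredient is composed with the smooth map $\overline{x}(\cdot)$ and depends smoothly on $\overrightarrow{\lambda}$ through the coefficients $\lambda_i$. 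Thus $g$ is continuous and $g(\overrightarrow{\lambda_e})=\mathcal{I}_X(\overline{x}_e)$. If $\mathcal{I}_X(\overline{x}_e)<0$, continuity gives an open neighborhood $V\subseteq V_0$ of $\overrightarrow{\lambda_e}$ with $g<0$ on $V$, so each $\overline{x}(\overrightarrow{\lambda})$, $\overrightarrow{\lambda}\in V$, is a non-degenerate regular equilibrium with negative invariant, hence unstable by Theorem~\eqref{mainthm}(1); the case $\mathcal{I}_X(\overline{x}_e)>0$ is identical, using $g>0$ on $V$ and Theorem~\eqref{mainthm}(2) to get Lyapunov stability.

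The only genuinely delicate point is the one flagged in the second paragraph: one must make sure that the invariant attached to $\overline{x}(\overrightarrow{\lambda})$ is computed with the correct multiplier vector, namely the deformation parameter $\overrightarrow{\lambda}$ itself, for otherwise continuity of $g$ would not follow. Once that identification is recorded — and it is automatic from the implicit function theorem setup together with the uniqueness of Lagrange multipliers guaranteed by regularity — the remainder is a routine openness-of-sign argument.
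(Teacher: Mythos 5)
Your proof is correct and follows essentially the same route as the paper's: apply Theorem \eqref{ift} to get the smooth family $\overline{x}(\overrightarrow{\lambda})$ of non-degenerate regular equilibria, observe that $\mathcal{I}_{X}\circ\overline{x}$ is continuous, shrink the parameter neighborhood so the sign of the invariant is preserved, and invoke Theorem \eqref{mainthm} at each point. Your second paragraph — identifying the Lagrange multiplier of $\overline{x}(\overrightarrow{\lambda})$ with the deformation parameter $\overrightarrow{\lambda}$ itself and noting its uniqueness — makes explicit a point the paper leaves implicit in asserting continuity, and is a welcome clarification rather than a deviation.
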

\begin{proof}
From Theorem \eqref{ift}, there exist $V_{1} \subseteq \mathbb{R}^{n-2}$, an open neighborhood of $\overrightarrow{\lambda_{e}}$, $U\subseteq \Omega$, an open neighborhood of $\overline{x}_e$, and a smooth function $\overline{x}: V_{1} \rightarrow U$ such that $\overline{x}(\overrightarrow{\lambda_{e}})=\overline{x}_e$, and moreover, for each $\overrightarrow{\lambda}\in V_{1}$, $\overline{x}(\overrightarrow{\lambda}) \in \mathcal{E}^{C_{n-1}}_{C_1 ,\dots, C_{n-2}} \cap U$ is a non-degenerate regular equilibrium state of the integrable system \eqref{systy}. Assume that $\mathcal{I}_{X}(\overline{x}_e)\neq 0$. Since the function $\mathcal{I}_{X}\circ\overline{x}:V_1 \rightarrow \mathbb{R}$ is continuous, there exists $V\subseteq V_1$ an open neighborhood of $\overrightarrow{\lambda_{e}}$ such that $\operatorname{sgn}({\mathcal{I}_{X}(\overline{x}_e)})=\operatorname{sgn}({\mathcal{I}_{X}(\overline{x}(\overrightarrow{\lambda_{e}}))}) =\operatorname{sgn}({\mathcal{I}_{X}(\overline{x}(\overrightarrow{\lambda}))})$, for every $\overrightarrow{\lambda}\in V$. Now the conclusion follows from Theorem \eqref{mainthm} applied to each non-degenerate regular equilibrium point $\overline{x}(\overrightarrow{\lambda})$,  $\overrightarrow{\lambda}\in V$. 
\end{proof}

\section{Leafwise stability of non-degenerate equilibria of completely integrable systems}

The aim of this section is to provide a criterion to decide leafwise stability of non-degenerate regular equilibria of Hamiltonian realizations of completely integrable systems. In order to do that we fix a non-degenerate regular equilibrium point of the completely integrable system \eqref{sys} realized as the Hamiltonian system \eqref{systy}, and we choose an open neighborhood around the equilibrium point, where the Darboux Normal Form Theorem \eqref{darbnf} can be applied. More precisely, let $$X = \sum_{i=1}^{n}\nu \cdot \dfrac{\partial(C_1,\dots,C_{n-2},x_i,H)}{\partial(x_1,\dots,x_n)}\cdot\dfrac{\partial}{\partial{x_i}},$$ be the vector field associated to the completely integrable system \eqref{sys}, realized as the Hamiltonian dynamical system \eqref{systy}, i.e., $\left(\Omega,\{\cdot,\cdot\}_{\nu;C_1,\dots,C_{n-2}},H=C_{n-1}\right)$. Let $\overline{x}_e \in \mathcal{E}^{C_{n-1}}_{C_1 ,\dots, C_{n-2}}$ be a non-degenerate regular equilibrium point of $X$. Let $(\Omega^{\prime},\Phi_1,\Phi_2)$ be a triple as introduced in Theorem \eqref{darbnf}, consisting of an open neighborhood $\Omega^{\prime}\subseteq \Omega$ of $\overline{x}_e$, and two smooth functions $\Phi_1,\Phi_2 \in\mathcal{C}^{\infty}(\Omega^{\prime},\mathbb{R})$, such that the map $\Phi:\Omega^{\prime}\rightarrow W^{\prime}=\Phi(\Omega^{\prime})$, given by $\Phi=(\Phi_1,\Phi_2,C_1,\dots,C_{n-2})$, is a smooth diffeomorphism, and $\nu(\overline{x})\neq 0$, for every $\overline{x}\in\Omega^{\prime}$. Then, $\Phi_{\star}X$, the push forward of the vector field $X$ by $\Phi$, is a Hamiltonian vector field, with Hamiltonian $\Phi_{\star}H=\Phi_{\star}C_{n-1}$, defined on the Poisson manifold $\left(W^{\prime},\{\cdot,\cdot\}_{\nu_{\Phi};\Phi_{\star}C_1,\dots,\Phi_{\star}C_{n-2}}\right)$, and has the expression
\begin{equation}\label{foleq}
\Phi_{\star}X = \nu_{\Phi}\cdot\left[ \dfrac{\partial(\Phi_{\star}H)}{\partial y_2}\cdot\dfrac{\partial}{\partial y_1}-\dfrac{\partial(\Phi_{\star}H)}{\partial y_1}\cdot\dfrac{\partial}{\partial y_2}\right],
\end{equation}
where $\nu_{\Phi}=\Phi_{\star}\nu\cdot \Phi_{\star}\operatorname{Jac}(\Phi)$, and $(y_1,\dots,y_n)=\Phi(x_1,\dots,x_n)$, denote the local coordinates on $W^{\prime}$. Moreover, as the diffeomorphism $\Phi:\Omega^{\prime}\rightarrow W^{\prime}=\Phi(\Omega^{\prime})$ is also a Poisson isomorphism between the Poisson manifolds $\left(\Omega^{\prime},\{\cdot,\cdot\}_{\nu;C_1,\dots,C_{n-2}}\right)$ and $\left(W^{\prime},\{\cdot,\cdot\}_{\nu_{\Phi};\Phi_{\star}C_1,\dots,\Phi_{\star}C_{n-2}}\right)$, it maps symplectic leaves to symplectic leaves. Let us denote by $\Sigma^{\prime}_{\overline{x}_e}\subset \Omega^{\prime}$ the (regular) symplectic leaf of the Poisson manifold $\left(\Omega^{\prime},\{\cdot,\cdot\}_{\nu;C_1,\dots,C_{n-2}}\right)$, which contains the non-degenerate regular equilibrium point $\overline{x}_e$. Since $Z(\nu|_{\Omega^{\prime}})=\emptyset$, it follows from the second section of the article that $\Sigma^{\prime}_{\overline{x}_e}$ is the connected component which contains $\overline{x}_e$, of the two-dimensional manifold given by $C^{-1}(\{(c_1,\dots,c_{n-2})\})$, if $C(\overline{x}_e)=:(c_1,\dots,c_{n-2})$ is a regular value of $C:=({C_1}|_{\Omega^{\prime}} ,\dots, {C_{n-2}}|_{\Omega^{\prime}})$, or given by $C^{-1}(\{(c_1,\dots,c_{n-2})\})\setminus \operatorname{Crit}(C)$, if $(c_1,\dots,c_{n-2})$ is a critical value of $C$, where $\operatorname{Crit}(C)\subset \Omega^{\prime}$ stands for the set of critical points of $C$.

Consequently, since $\Phi:\Omega^{\prime}\rightarrow W^{\prime}=\Phi(\Omega^{\prime})$ is a Poisson isomorphism, the non-degenerate regular equilibrium $\overline{x}_e$ is Lyapunov stable (unstable) relative to perturbations along the symplectic leaf $\Sigma^{\prime}_{\overline{x}_e}$ of the Poisson manifold $\left(\Omega^{\prime},\{\cdot,\cdot\}_{\nu;C_1,\dots,C_{n-2}}\right)$, if and only if the non-degenerate regular equilibrium $\Phi(\overline{x}_e)$ is Lyapunov stable (unstable) relative to perturbations along the symplectic leaf $\Phi(\Sigma^{\prime}_{\overline{x}_e})$ of the Poisson manifold $\left(W^{\prime},\{\cdot,\cdot\}_{\nu_{\Phi};\Phi_{\star}C_1,\dots,\Phi_{\star}C_{n-2}}\right)$. Otherwise stated, $\overline{x}_e$ is a Lyapunov stable (unstable) non-degenerate regular equilibrium of the vector field $X|_{\Sigma^{\prime}_{\overline{x}_e}}$ (where the vector field $X$ is identified with its restriction $X|_{\Omega^{\prime}}\in\mathfrak{X}(\Omega^{\prime})$) if and only if $\Phi(\overline{x}_e)$ is a Lyapunov stable (unstable) non-degenerate regular equilibrium of the vector field $\Phi_{\star}( X|_{\Sigma^{\prime}_{\overline{x}_e}})$. Since $\Phi_{\star}( X|_{\Sigma^{\prime}_{\overline{x}_e}})=(\Phi_{\star}X)|_{\Phi(\Sigma^{\prime}_{\overline{x}_e})}$, one can reduce the original stability problem, to the stability analysis of the non-degenerate regular equilibrium $\Phi(\overline{x}_e)$ of the vector field $(\Phi_{\star}X)|_{\Phi(\Sigma^{\prime}_{\overline{x}_e})}$. Note that since $\Phi$ is a Poisson isomorphism, then $\Phi(\Sigma^{\prime}_{\overline{x}_e})$ is the regular symplectic leaf of $\left(W^{\prime},\{\cdot,\cdot\}_{\nu_{\Phi};\Phi_{\star}C_1,\dots,\Phi_{\star}C_{n-2}}\right)$ which contains the equilibrium $\Phi(\overline{x}_e)$ of $\Phi_{\star}X$. More precisely, $\Phi(\Sigma^{\prime}_{\overline{x}_e})$ is the connected component of the two-dimensional manifold $W^{\prime}\bigcap\{(y_1, y_2, \dots,y_n )\in\mathbb{R}^{n} : y_3 =c_1 ,\dots, y_{n}=c_{n-2}\}$ which contains $\Phi(\overline{x}_e)$, where $(c_1,\dots,c_{n-2})=(C_1 (\overline{x}_e),\dots, C_{n-2} (\overline{x}_e))$. Using the relation \eqref{foleq}, we obtain the following local expression of the system of ordinary differential equations induced by the two-dimensional symplectic Hamiltonian vector field $(\Phi_{\star}X)|_{\Phi(\Sigma^{\prime}_{\overline{x}_e})}$: 
\begin{equation}\label{foleqok}
\left\{\begin{array}{l}
\dot y_{1}=\nu_{\Phi;c_1,\dots,c_{n-2}}\cdot \dfrac{\partial H_{c_1,\dots,c_{n-2}}}{\partial y_2}\\
\dot y_{2}= \nu_{\Phi;c_1,\dots,c_{n-2}}\cdot \left( -\dfrac{\partial H_{c_1,\dots,c_{n-2}}}{\partial y_1}\right),\\
\end{array}\right.
\end{equation}
where $$\nu_{\Phi;c_1,\dots,c_{n-2}}(y_1,y_2):=\nu_{\Phi}(y_1,y_2,c_1,\dots,c_{n-2}), H_{c_1,\dots,c_{n-2}}(y_1,y_2):=(\Phi_{\star}H)(y_1,y_2,c_1,\dots,c_{n-2})$$ for every $(y_1,y_2,c_1,\dots,c_{n-2})\in \Phi(\Sigma^{\prime}_{\overline{x}_e})$. Since $\nu (x)\neq 0$ for every $x\in\Omega^{\prime}$, we obtain that $\nu_{\Phi}=\Phi_{\star}\nu\cdot \Phi_{\star}\operatorname{Jac}(\Phi)$ is nonvanishing in $W^{\prime}$. Consequently, using the relation \eqref{foleqok}, it follows that the $2-$form $\nu_{\Phi}(y_1,y_2,c_1,\dots,c_{n-2})\cdot\mathrm{d}y_1 \wedge \mathrm{d}y_2$, is the symplectic form on $\Phi(\Sigma^{\prime}_{\overline{x}_e})$ induced by the Poisson bracket $\{\cdot,\cdot\}_{\nu_{\Phi};\Phi_{\star}C_1,\dots,\Phi_{\star}C_{n-2}}$ defined on $W^{\prime}$.

Let us recall now the relation \eqref{relimp1}, i.e., 
\begin{align*}
\dfrac{\partial(\Phi_{\star}H)}{\partial y_1}(\Phi(\overline{x}_e))=\dfrac{\partial(\Phi_{\star}H)}{\partial y_2}(\Phi(\overline{x}_e))=0.
\end{align*}
Consequently, as $\Phi(\overline{x}_e)=(\Phi_1 (\overline{x}_e), \Phi_2 (\overline{x}_e),c_1,\dots,c_{n-2})$, it follows that
\begin{align}\label{eqconditon}
\begin{split}
\dfrac{\partial H_{c_1,\dots,c_{n-2}}}{\partial y_1}(\Phi_1 (\overline{x}_e), \Phi_2 (\overline{x}_e))&=\dfrac{\partial(\Phi_{\star}H)}{\partial y_1}(\Phi(\overline{x}_e))=0,\\
\dfrac{\partial H_{c_1,\dots,c_{n-2}}}{\partial y_2}(\Phi_1 (\overline{x}_e), \Phi_2 (\overline{x}_e))&=\dfrac{\partial(\Phi_{\star}H)}{\partial y_2}(\Phi(\overline{x}_e))=0,
\end{split}
\end{align}
and so the point $(\Phi_1 (\overline{x}_e), \Phi_2 (\overline{x}_e))$ is an equilibrium state of the two-dimensional symplectic Hamiltonian system \eqref{foleqok}. Hence, in order to study the dynamics of the vector field $(\Phi_{\star}X)|_{\Phi(\Sigma^{\prime}_{\overline{x}_e})}$ around the equilibrium point $\Phi(\overline{x}_e)$, we shall study instead the dynamics of the two-dimensional symplectic Hamiltonian system \eqref{foleqok} around the equilibrium point $(\Phi_1 (\overline{x}_e), \Phi_2 (\overline{x}_e))$. 

Next result presents the local dynamics around a non-degenerate regular equilibrium point, $\overline{x}_e$, of the Hamiltonian system $\left(\Omega,\{\cdot,\cdot\}_{\nu;C_1,\dots,C_{n-2}},H=C_{n-1}\right)$, restricted to the corresponding symplectic leaf $\Sigma_{\overline{x}_e}$ of the Poisson manifold $\left(\Omega,\{\cdot,\cdot\}_{\nu;C_1,\dots,C_{n-2}}\right)$.

\begin{theorem}\label{specleaf}
Let $\overline{x}_e \in \mathcal{E}^{C_{n-1}}_{C_1 ,\dots, C_{n-2}}$ be a non-degenerate regular equilibrium point of the vector field $X$ realized as the Hamiltonian dynamical system \eqref{systy}. Let $\Sigma_{\overline{x}_e}\subset \Omega$ be the sympectic leaf of the Poisson manifold $\left(\Omega,\{\cdot,\cdot\}_{\nu;C_1,\dots,C_{n-2}}\right)$, passing through $\overline{x}_e$. Then the following assertions hold true.
\begin{itemize}
\item [(a)] The characteristic polynomial of $\mathfrak{L}^{X|_{\Sigma_{\overline{x}_e}}}(\overline{x}_e)$ is given by 
$$
p_{\mathfrak{L}^{X|_{\Sigma_{\overline{x}_e}}}(\overline{x}_e)}(\mu)= \mu^2 + \mathcal{I}_{X}(\overline{x}_e).
$$
\item [(b)] If $\mathcal{I}_{X}(\overline{x}_e)<0$, then the equilibrium state $\overline{x}_e$ is an unstable equilibrium point of the restricted vector field $X|_{\Sigma_{\overline{x}_e}}$.

\item [(c)] If $\mathcal{I}_{X}(\overline{x}_e)>0$, then the equilibrium state $\overline{x}_e$ is a Lyapunov stable equilibrium point of the restricted vector field $X|_{\Sigma_{\overline{x}_e}}$. 

\item [(d)] If $\mathcal{I}_{X}(\overline{x}_e)>0$, then there exists $\varepsilon_{0}>0$ and a one-parameter family of periodic orbits of $X|_{\Sigma_{\overline{x}_e}}$ (and hence of $X$ too), $\left\{\gamma_{\varepsilon}\right\}_{0<\varepsilon\leq\varepsilon_0}\subset \Sigma_{\overline{x}_e}$, that shrink to $\overline{x}_e$ as $\varepsilon\rightarrow 0$, with periods $T_{\varepsilon}\rightarrow{\frac{2\pi}{\sqrt{\mathcal{I}_{X}(\overline{x}_e)}}}$ as $\varepsilon\rightarrow 0$. Moreover, the set $\{\overline{x}_e\}\cup\bigcup_{0<\varepsilon < \varepsilon_0} \gamma_{\varepsilon}$ represents the connected component of  $\Sigma_{\overline{x}_e}\setminus \gamma_{\varepsilon_{0}}$, which contains the equilibrium point $\overline{x}_e$.
\end{itemize}
\end{theorem}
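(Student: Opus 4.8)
The plan is to transfer the problem, via the Darboux diffeomorphism $\Phi$ of Theorem~\eqref{darbnf}, to the reduced two‑dimensional Hamiltonian system \eqref{foleqok} on $\Phi(\Sigma^{\prime}_{\overline{x}_e})$, whose equilibrium is the point $p_0:=(\Phi_1(\overline{x}_e),\Phi_2(\overline{x}_e))$ (recall \eqref{eqconditon}). Since $\Phi$ restricts to a diffeomorphism of an open neighbourhood of $\overline{x}_e$ in $\Sigma_{\overline{x}_e}$ onto $\Phi(\Sigma^{\prime}_{\overline{x}_e})$ conjugating $X|_{\Sigma_{\overline{x}_e}}$ to $(\Phi_{\star}X)|_{\Phi(\Sigma^{\prime}_{\overline{x}_e})}$, and conjugacies preserve stability, the characteristic polynomial of the linearization, and the set and periods of nearby periodic orbits (Corollary~\eqref{spectrum}), it suffices to prove (a)--(d) for \eqref{foleqok} at $p_0$. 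Differentiating the right‑hand side of \eqref{foleqok} at $p_0$ and using \eqref{eqconditon} to cancel the terms in which $\nu_{\Phi;c_1,\dots,c_{n-2}}$ is differentiated, one finds that the linearization is exactly $\nu_{\Phi}(\Phi(\overline{x}_e))\cdot L_{2,2}(\Phi(\overline{x}_e))$, the $2\times2$ block of \eqref{lin1}; its trace vanishes and its determinant is $\nu^{2}_{\Phi}(\Phi(\overline{x}_e))\cdot\det H_{2,2}(\Phi(\overline{x}_e))=\mathcal{I}_{\Phi_{\star}X}(\Phi(\overline{x}_e))=\mathcal{I}_{X}(\overline{x}_e)$ by \eqref{ifeq} and Theorem~\eqref{frstmain}. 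Hence its characteristic polynomial is $\mu^{2}+\mathcal{I}_{X}(\overline{x}_e)$, which is (a); and if $\mathcal{I}_{X}(\overline{x}_e)<0$ this polynomial has the positive root $\sqrt{-\mathcal{I}_{X}(\overline{x}_e)}$, so $\overline{x}_e$ is spectrally unstable, hence unstable, which is (b).

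For (c) I would apply the Arnold stability test (Theorem~\eqref{AST}) to \eqref{foleqok} with the single first integral $H_{c_1,\dots,c_{n-2}}$, so that $k=0$ and $W=\mathbb{R}^{2}$. The first condition of \eqref{condAr} is \eqref{eqconditon}; for the second, $\operatorname{Hess}H_{c_1,\dots,c_{n-2}}(p_0)=H_{2,2}(\Phi(\overline{x}_e))$ is a symmetric $2\times2$ matrix whose determinant equals $\mathcal{I}_{X}(\overline{x}_e)/\nu^{2}_{\Phi}(\Phi(\overline{x}_e))>0$, hence it is positive or negative definite by the elementary $2\times2$ fact established at the end of the proof of Theorem~\eqref{mainthm2}. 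Arnold's test then yields Lyapunov stability of $p_0$, which is (c). (This also follows directly from Theorem~\eqref{mainthm2} applied to the restricted vector field, as observed in the paragraph preceding the statement.)

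For (d), assume $\mathcal{I}_{X}(\overline{x}_e)>0$; replacing if necessary $H_{c_1,\dots,c_{n-2}}$ by $-H_{c_1,\dots,c_{n-2}}$, which merely reverses time and so alters neither the orbits nor their periods, we may assume $\operatorname{Hess}H_{c_1,\dots,c_{n-2}}(p_0)$ is positive definite. By the Morse lemma there is a neighbourhood of $p_0$ on which $p_0$ is the only critical point of $H_{c_1,\dots,c_{n-2}}$ and on which the level sets $\{H_{c_1,\dots,c_{n-2}}=h_0+s\}$, with $h_0:=H_{c_1,\dots,c_{n-2}}(p_0)$ and $s\in(0,s_0]$, are smooth Jordan curves bounding nested open disks and shrinking to $\{p_0\}$ as $s\downarrow0$. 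Each such curve is invariant under \eqref{foleqok} ($H_{c_1,\dots,c_{n-2}}$ being a first integral) and the vector field is nowhere zero on it (its only zeros near $p_0$ are at $p_0$, where $\nabla H_{c_1,\dots,c_{n-2}}=0$, since $\nu_{\Phi}\neq0$), so the curve, being a circle, is a single periodic orbit. Reparametrizing $s$ by $\varepsilon=\sqrt{s}$ produces the family $\{\gamma_{\varepsilon}\}_{0<\varepsilon\le\varepsilon_0}$, and the connected component of $\Sigma_{\overline{x}_e}\setminus\gamma_{\varepsilon_0}$ containing $\overline{x}_e$ is the open disk bounded by $\gamma_{\varepsilon_0}$, that is, $\{\overline{x}_e\}\cup\bigcup_{0<\varepsilon<\varepsilon_0}\gamma_{\varepsilon}$.

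The remaining — and, I expect, main — difficulty in (d) is the period limit $T_{\varepsilon}\to 2\pi/\sqrt{\mathcal{I}_{X}(\overline{x}_e)}$: the existence of the orbits is soft, but the limit must be extracted from the full nonlinear flow together with the non‑constant symplectic density $\nu_{\Phi}$, and since the relevant linear model is an isochronous center one cannot simply invoke persistence of a nondegenerate cycle. I would argue by blow‑up: moving $p_0$ to the origin and setting $z=\varepsilon w$ (with $\varepsilon^{2}=s$), the rescaled curve $\varepsilon^{-1}\gamma_{\varepsilon}=\{w:H_{c_1,\dots,c_{n-2}}(\varepsilon w)=h_0+\varepsilon^{2}\}$ converges uniformly, as $\varepsilon\downarrow0$, to the ellipse $\Gamma_0:=\{w:\tfrac12\langle\operatorname{Hess}H_{c_1,\dots,c_{n-2}}(0)\,w,w\rangle=1\}$, while the rescaled system $\dot w=\nu_{\Phi}(\varepsilon w)\,J\,\operatorname{Hess}H_{c_1,\dots,c_{n-2}}(0)\,w+O(\varepsilon)$ (with $J$ the standard symplectic matrix) converges in $C^{1}$ on bounded sets to $\dot w=Aw$, $A:=\nu_{\Phi}(0)\,J\,\operatorname{Hess}H_{c_1,\dots,c_{n-2}}(0)$, whose orbits are all ellipses of period $2\pi/\sqrt{\mathcal{I}_{X}(\overline{x}_e)}$ by part (a). Writing the flow on $\varepsilon^{-1}\gamma_{\varepsilon}$ in an angular coordinate $\theta$, the angular speed $\dot\theta=g_{\varepsilon}(\theta)$ converges uniformly to the strictly positive $2\pi$‑periodic angular speed $g_{0}(\theta)$ of the limit flow on $\Gamma_0$, so by dominated convergence $T_{\varepsilon}=\oint g_{\varepsilon}(\theta)^{-1}\,\mathrm{d}\theta\to\oint g_{0}(\theta)^{-1}\,\mathrm{d}\theta=2\pi/\sqrt{\mathcal{I}_{X}(\overline{x}_e)}$, the spatial rescaling not affecting time. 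Transporting the whole family back by $\Phi^{-1}$ places it on $\Sigma^{\prime}_{\overline{x}_e}\subseteq\Sigma_{\overline{x}_e}$, consisting of periodic orbits of $X|_{\Sigma_{\overline{x}_e}}$ (hence of $X$) with the same periods, which completes (d).
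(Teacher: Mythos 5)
Your proposal is correct, and parts (a)--(c) follow essentially the same route as the paper: transfer to the planar Darboux normal form \eqref{foleqok}, compute the $2\times 2$ linearization $\nu_{\Phi}(\Phi(\overline{x}_e))\cdot L_{2,2}(\Phi(\overline{x}_e))$ (trace zero, determinant $\mathcal{I}_{X}(\overline{x}_e)$ via \eqref{ifeq} and Theorem \eqref{frstmain}), read off instability from the positive eigenvalue, and obtain stability from definiteness of $H_{2,2}(\Phi(\overline{x}_e))$ --- the paper phrases this last step as Dirichlet's theorem rather than the $k=0$ case of Arnold's test, but that is the same argument. The genuine divergence is in (d): the paper simply invokes the Lyapunov Center Theorem (whose non-resonance hypothesis is vacuous here, since on the two-dimensional leaf the only eigenvalues are $\pm i\sqrt{\mathcal{I}_{X}(\overline{x}_e)}$), which delivers the family of periodic orbits, the period limit $2\pi/\sqrt{\mathcal{I}_{X}(\overline{x}_e)}$, and the disk structure in one stroke. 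You instead construct the orbits by hand: the Morse lemma makes the nearby level sets of $H_{c_1,\dots,c_{n-2}}$ nested Jordan curves, each of which is a compact invariant set on which the vector field does not vanish and hence a single periodic orbit, and the period limit is extracted by the blow-up $z=\varepsilon w$, under which the rescaled curves converge to an ellipse and the rescaled flow converges in $C^{1}$ to the linear flow of part (a), so the angular-speed integral for the period converges to $2\pi/\sqrt{\mathcal{I}_{X}(\overline{x}_e)}$. Your route is longer but self-contained and yields strictly more information --- it identifies the periodic orbits explicitly as the connected components of the level sets of the restricted Hamiltonian near $\overline{x}_e$, from which the description of the connected component of $\Sigma_{\overline{x}_e}\setminus\gamma_{\varepsilon_0}$ falls out of the nesting rather than from the abstract disk statement in the Center Theorem; the paper's route buys brevity at the cost of importing a black box. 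Both are valid.
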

\begin{proof}
In order to prove the Theorem we shall restrict our analysis to an open neighborhood $\Omega^{\prime}\subseteq \Omega$ around $\overline{x}_e$, where the Darboux Normal Form holds true. Consequently, using the above notations, we shall consider in the following, the local dynamics of the vector field $X|_{\Sigma^{\prime}_{\overline{x}_e}}$ around the equilibrium point $\overline{x}_e$.
\begin{itemize}  
\item [(a)] As $\mathfrak{L}^{X|_{\Sigma_{\overline{x}_e}}}(\overline{x}_e)=\mathfrak{L}^{X|_{\Sigma^{\prime}_{\overline{x}_e}}}(\overline{x}_e)$, and $\mathfrak{L}^{X|_{\Sigma^{\prime}_{\overline{x}_e}}}(\overline{x}_e)=\mathfrak{L}^{(\Phi_{\star}X)|_{\Phi(\Sigma^{\prime}_{\overline{x}_e})}}(\Phi(\overline{x}_e))$ (by Proposition \eqref{diflin}), it follows that $\mathfrak{L}^{X|_{\Sigma_{\overline{x}_e}}}(\overline{x}_e)$ represents the linearization of the system \eqref{foleqok} evaluated at the equilibrium point $(\Phi_1 (\overline{x}_e), \Phi_2 (\overline{x}_e))$. Consequently, we have that
\begin{align*}
\mathfrak{L}^{X|_{\Sigma_{\overline{x}_e}}}(\overline{x}_e) = &\nu_{\Phi;c_1,\dots,c_{n-2}}(\Phi_1 (\overline{x}_e), \Phi_2 (\overline{x}_e))\\
& \cdot
\begin{bmatrix}
\dfrac{\partial^2 H_{c_1,\dots,c_{n-2}}}{\partial y_2 \partial y_1}(\Phi_1 (\overline{x}_e), \Phi_2 (\overline{x}_e)) & \dfrac{\partial^2 H_{c_1,\dots,c_{n-2}}}{\partial y_2 ^2}(\Phi_1 (\overline{x}_e), \Phi_2 (\overline{x}_e))\\
- \dfrac{\partial^2 H_{c_1,\dots,c_{n-2}}}{\partial y_1^2}(\Phi_1 (\overline{x}_e), \Phi_2 (\overline{x}_e)) & - \dfrac{\partial^2 H_{c_1,\dots,c_{n-2}}}{\partial y_1 \partial y_2}(\Phi_1 (\overline{x}_e), \Phi_2 (\overline{x}_e))\\
\end{bmatrix}\\
= &\nu_{\Phi}(\Phi(\overline{x}_e))\cdot
\begin{bmatrix}
\dfrac{\partial^2 (\Phi_{\star}H)}{\partial y_2 \partial y_1}(\Phi(\overline{x}_e)) & \dfrac{\partial^2 (\Phi_{\star}H)}{\partial y_2 ^2}(\Phi(\overline{x}_e))\\
- \dfrac{\partial^2 (\Phi_{\star}H)}{\partial y_1^2}(\Phi(\overline{x}_e)) & - \dfrac{\partial^2 (\Phi_{\star}H)}{\partial y_1 \partial y_2}(\Phi(\overline{x}_e))\\
\end{bmatrix},
\end{align*}
and hence we obtain the following expression for the associated characteristic polynomial:
\begin{align*}
\begin{split}
p_{\mathfrak{L}^{X|_{\Sigma_{\overline{x}_e}}}(\overline{x}_e))}(\mu )=&\mu^2 +\nu^2_{\Phi}(\Phi(\overline{x}_e))\cdot\left[ \dfrac{\partial^2 (\Phi_{\star}H)}{\partial y_1^2}(\Phi(\overline{x}_e)) \cdot\dfrac{\partial^2 (\Phi_{\star}H)}{\partial y_2^2}(\Phi(\overline{x}_e))\right.\\
&-\left.\left( \dfrac{\partial^2 (\Phi_{\star}H)}{\partial y_1 \partial y_2}(\Phi(\overline{x}_e))\right)^2\right].
\end{split}
\end{align*}
Using the equality \eqref{ifeq} followed by Theorem \eqref{frstmain}, we obtain
\begin{align*}
\begin{split}
p_{\mathfrak{L}^{X|_{\Sigma_{\overline{x}_e}}}(\overline{x}_e))}(\mu )&=\mu^2 +\nu^2_{\Phi}(\Phi(\overline{x}_e))\cdot\left[ \dfrac{\partial^2 (\Phi_{\star}H)}{\partial y_1^2}(\Phi(\overline{x}_e)) \cdot\dfrac{\partial^2 (\Phi_{\star}H)}{\partial y_2^2}(\Phi(\overline{x}_e))\right.\\
&-\left.\left( \dfrac{\partial^2 (\Phi_{\star}H)}{\partial y_1 \partial y_2}(\Phi(\overline{x}_e))\right)^2\right] = \mu^2 +\mathcal{I}_{\Phi_{\star}X}(\Phi(\overline{x}_e))\\
&=\mu^2 +\mathcal{I}_{X}(\overline{x}_e).
\end{split}
\end{align*}

\item [(b)] The proof follows by the fact that the characteristic polynomial of $\mathfrak{L}^{X|_{\Sigma_{\overline{x}_e}}}(\overline{x}_e)$ admits the strictly positive root $\mu_{+} = \sqrt{-\mathcal{I}_{X}(\overline{x}_e)}$.

\item [(c)] We shall prove equivalently that $(\Phi_1 (\overline{x}_e), \Phi_2 (\overline{x}_e))$ is a Lyapunov stable equilibrium point of the symplectic Hamiltonian dynamical system \eqref{foleqok}. In order to do that, we will use Dirichlet's Stability Theorem which states that \textit{given a symplectic Hamiltonian system, each isolated local minima/maxima of the Hamiltonian function, is a Lyapunov stable equilibrium point of the system}. Let us recall first from the relation \eqref{eqconditon} that $(\Phi_1 (\overline{x}_e), \Phi_2 (\overline{x}_e))$ is a critical point of the Hamiltonian function $H_{c_1,\dots,c_{n-2}}$. In order to check Dirichlet's Stability Theorem hypothesis, we shall show that $\mathrm{d}^{2}H_{c_1,\dots,c_{n-2}}(\Phi_1 (\overline{x}_e), \Phi_2 (\overline{x}_e))$ is positive or negative definite. As $\mathrm{d}^{2}H_{c_1,\dots,c_{n-2}}(\Phi_1 (\overline{x}_e), \Phi_2 (\overline{x}_e))$ is represented by a $2 \times 2$ symmetric real matrix, this will be positive or negative definite if and only if
\begin{equation}\label{conry}
\det(\mathrm{d}^{2}H_{c_1,\dots,c_{n-2}}(\Phi_1 (\overline{x}_e), \Phi_2 (\overline{x}_e)))>0.
\end{equation}
In order to prove the relation \eqref{conry}, let us compute first $\mathrm{d}^{2}H_{c_1,\dots,c_{n-2}}(\Phi_1 (\overline{x}_e), \Phi_2 (\overline{x}_e))$. A straightforward computation leads to 
\begin{align*}
\mathrm{d}^{2}H_{c_1,\dots,c_{n-2}}&(\Phi_1 (\overline{x}_e), \Phi_2 (\overline{x}_e))= \\
=&\begin{bmatrix}
\dfrac{\partial^2 H_{c_1,\dots,c_{n-2}}}{\partial y_1 ^2 }(\Phi_1 (\overline{x}_e), \Phi_2 (\overline{x}_e)) & \dfrac{\partial^2 H_{c_1,\dots,c_{n-2}}}{\partial y_1 \partial y_2}(\Phi_1 (\overline{x}_e), \Phi_2 (\overline{x}_e))\\
 \dfrac{\partial^2 H_{c_1,\dots,c_{n-2}}}{\partial y_2 \partial y_1}(\Phi_1 (\overline{x}_e), \Phi_2 (\overline{x}_e)) &  \dfrac{\partial^2 H_{c_1,\dots,c_{n-2}}}{\partial y_2 ^2}(\Phi_1 (\overline{x}_e), \Phi_2 (\overline{x}_e))\\
\end{bmatrix}\\
= &
\begin{bmatrix}
\dfrac{\partial^2 (\Phi_{\star}H)}{\partial y_1 ^2}(\Phi(\overline{x}_e)) & \dfrac{\partial^2 (\Phi_{\star}H)}{\partial y_1 \partial y_2}(\Phi(\overline{x}_e))\\
 \dfrac{\partial^2 (\Phi_{\star}H)}{\partial y_2 \partial y_1}(\Phi(\overline{x}_e)) &  \dfrac{\partial^2 (\Phi_{\star}H)}{\partial y_2 ^2}(\Phi(\overline{x}_e))\\
\end{bmatrix}
=: H_{2,2}(\Phi(\overline{x}_e)).
\end{align*}
Using the relation \eqref{cimpo}, and the fact that $\nu_{\Phi}(\Phi(\overline{x}_e))\neq 0$, we obtain that
\begin{align*}
\operatorname{sgn}(\det(\mathrm{d}^{2}H_{c_1,\dots,c_{n-2}}(\Phi_1 (\overline{x}_e), \Phi_2 (\overline{x}_e))))=\operatorname{sgn}(\det(H_{2,2}(\Phi(\overline{x}_e))))=\operatorname{sgn}(\mathcal{I}_{X}(\overline{x}_e))=1,
\end{align*}
and hence $\det(\mathrm{d}^{2}H_{c_1,\dots,c_{n-2}}(\Phi_1 (\overline{x}_e), \Phi_2 (\overline{x}_e)))>0$.

\item [(d)] The proof follows directly from item $(a)$ and the Lyapunov Center Theorem. In order to have a self-contained presentation, let us recall now the statement of the Lyapunov Center Theorem (for details see, e.g., \cite{abrahammarsden}). 

\textit{(Lyapunov Center Theorem) Let $(M,\omega)$ be a symplectic manifold, and let $X_H \in\mathfrak{X}(M)$ be a smooth (symplectic) Hamiltonian vector field. Assume that $m_e \in M$ is an equilibrium point of $X_{H}$ such that $\mu_{\pm}:=\pm i \alpha$, $\alpha >0$, are purely imaginary eigenvalues of $\mathfrak{L}^{X_H}(m_e)$, and moreover, do not exist any $k\in\mathbb{N}\setminus\{1\}$, such that $k\cdot \mu_{\pm}$ are eigenvalues of $\mathfrak{L}^{X_H}(m_e)$. Then, there exist $\varepsilon_0 >0$ and a one-parameter family of periodic orbits of $X_H$, $\left\{ \gamma_{\varepsilon}\right\}_{0<\varepsilon \leq \varepsilon_0}$ with periods $T_{\varepsilon}\rightarrow 2\pi/{\alpha}$ as $\varepsilon \rightarrow 0$. Moreover, $\gamma_{\varepsilon}$ approaches $m_e$ as $\varepsilon \rightarrow 0$, and the set $\{m_e\}\cup\bigcup_{0<\varepsilon \leq \varepsilon_0}\gamma_{\varepsilon}$ is a smooth two-dimensional submanifold with boundary $\gamma_{\varepsilon_0}$, diffeomorphic with the two-dimensional closed unit disk.}
\end{itemize}
\end{proof}

\section{Example}

In this section we illustrate the main theoretical results of this article, in the case of a concrete dynamical system coming from geophysics, which describes the irregular polarity switching of Earth's magnetic field. More precisely, the system we consider in the sequel, is the Hamiltonian version of the Rikitake two-disk dynamo system (see e.g. \cite{riki}, \cite{cookroberts}) analyzed in \cite{tudoranSIAM}, and described by the equations:
\begin{equation}\label{rik}
\left\{ \begin{array}{l}
 \dot x = yz + \beta y \\
 \dot y = xz - \beta x \\
 \dot z =  - xy \\
 \end{array} \right.
\end{equation}
where $\beta\in\mathbb{R}\setminus\{0\}$ is a parameter. The above system is the Rikitake system introduced in \cite{llibre} in the particular case $\alpha=\mu=0$. Let us denote by $X_{\beta}\in\mathfrak{X}(\mathbb{R}^3)$ the vector field which generates the Rikitake system \eqref{rik}, i.e., 
$$
X_{\beta}(x,y,z):=\left(yz + \beta y\right)\dfrac{\partial}{\partial {x}} + \left( xz - \beta x\right) \dfrac{\partial}{\partial {y}} -xy \dfrac{\partial}{\partial {z}}, ~ (\forall)(x,y,z)\in\mathbb{R}^3.
$$
As the purpose of this work concerns the stability analysis of equilibrium states, let us recall from \cite{tudoranSIAM} that the equilibria of the vector field $X_\beta$ are the elements of the set
$$
\mathcal{E}^{X_{\beta}}:=\{(M,0,\beta) : M\in\mathbb{R}\} \cup \{(0,M,-\beta) : M\in\mathbb{R}\}\cup \{(0,0,M) : M\in\mathbb{R}\}.
$$

As the system \eqref{rik} is completely integrable, admitting the first integrals $I_1, I^{\beta}_2 \in\mathcal{C}^{\infty}(\mathbb{R}^3,\mathbb{R})$ given by $I_1 (x,y,z)=\dfrac{1}{2}\left(x^2 +y^2 \right)+z^2 $, $I^{\beta}_2 (x,y,z)=\dfrac{1}{4}\left(-x^2 +y^2\right)-\beta z$, for all $(x,y,z)\in\mathbb{R}^3$, the vector field $X_{\beta}$ can be realized as a vector field of the type \eqref{sywedge} in two different ways.

\begin{enumerate}
\item The first realization of the vector field $X_\beta$ is given by
\begin{equation}\label{rikiq}
X_{\beta} = (-\nu) \star\left(\nabla C_1 \wedge \nabla C_2\right),
\end{equation}
where $\nu\equiv 1$, $C_1 =I_1$, and $C_2 =I^{\beta}_2$.

Consequently, the vector field $X_{\beta}$ admits a Hamiltonian realization of the type \eqref{systy}, $(\mathbb{R}^3,\{\cdot,\cdot\}_{1;C},H_{\beta})$, i.e., $X_{\beta}=X_{H_{\beta}}$, where $H_{\beta}:=C_2$, $C:=C_1$, and the Poisson bracket $\{\cdot,\cdot\}_{1;C}$ is given by
$$
\{f,g\}_{1;C}\cdot \mathrm{d}x\wedge\mathrm{d}y\wedge\mathrm{d}z:= \mathrm{d}C \wedge \mathrm{d}f\wedge \mathrm{d}g,
$$
for every $f,g\in\mathcal{C}^{\infty}(\mathbb{R}^3,\mathbb{R})$. 

Regarding the symplectic foliation of the Poisson manifold $(\mathbb{R}^3, \{\cdot,\cdot\}_{1;C})$, the \textit{regular symplectic leaves} are given by the ellipsoids $\Sigma_{r}:=\{(x,y,z)\in\mathbb{R}^3 : C(x,y,z)= r\}$, $r>0$, while the (unique) \textit{singular symplectic leaf} is given by the singleton $\{(0,0,0)\}$.

After some straightforward computations, it follows that the set of \textit{regular} equilibrium points of the Rikitake system \eqref{rik}, realized as the Hamiltonian dynamical system $(\mathbb{R}^3,\{\cdot,\cdot\}_{1;C},H_{\beta})$, is given by $\mathcal{E}^{X_{\beta}}\setminus\{(0,0,0)\}$, whereas the corresponding set of \textit{non-degenerate regular} equilibrium states is 
\begin{equation}\label{regriki} 
\left\{(0,0,M) : M\in\mathbb{R}\setminus\{0\} \right\}.
\end{equation}

Equivalently, following the notations introduced in Section $3$, each non-degenerate regular equilibrium point, $(0,0,M)$, $M\neq 0$, is a \textit{non-degenerate critical point} of the smooth function $F_{\beta/(2M)}\in\mathcal{C}^{\infty}(\mathbb{R}^3,\mathbb{R})$ given by
\begin{align}\label{gbeta}
\begin{split}
F_{\beta/(2M)}(x,y,z)&=H_{\beta}(x,y,z)+\dfrac{\beta}{2M}C(x,y,z)\\
&=\dfrac{\beta -M}{4M} x^2 + \dfrac{\beta + M}{4M} y^2 + \dfrac{\beta}{2M} z^2 -\beta z,
\end{split}
\end{align}
for every $(x,y,z)\in\mathbb{R}^3$.

Consequently, the associated scalar quantity, $\mathcal{I}_{X^{\beta}}(0,0,M)$, becomes 
\begin{align}\label{invriki}
\begin{split}
\mathcal{I}_{X^{\beta}}(0,0,M)&=\nu^2(0,0,M)\cdot \det{(\operatorname{Hess}F_{\beta/(2M)}(0,0,M))}\\
&\cdot <[\operatorname{Hess}F_{\beta/(2M)}(0,0,M)]^{-1}\cdot\nabla C (0,0,M), \nabla C(0,0,M)>\\
&=1^{2}\cdot\dfrac{(\beta^2 - M^2) \beta}{4 M^3}\cdot\dfrac{4 M^3}{\beta}\\
&=\beta^2 - M^2.
\end{split}
\end{align}

\item The second realization of the vector field $X_\beta$ is given by
\begin{equation}\label{rikiq2}
X_{\beta} = (-\nu) \star\left(\nabla C_1 \wedge \nabla C_2\right),
\end{equation}
where $\nu\equiv -1$, $C_1 =I^{\beta}_2$, and $C_2 =I_1$.

In this case, the vector field $X_{\beta}$ admits a Hamiltonian realization of the type \eqref{systy}, $(\mathbb{R}^3,\{\cdot,\cdot\}_{-1;C_{\beta}},H)$, i.e., $X_{\beta}=X_{H}$, where $H:=C_2$, $C_\beta :=C_1$, and the Poisson bracket $\{\cdot,\cdot\}_{-1;C_{\beta}}$ is given by
$$
\{f,g\}_{-1;C_{\beta}}\cdot \mathrm{d}x\wedge\mathrm{d}y\wedge\mathrm{d}z:= -\mathrm{d}C_{\beta} \wedge \mathrm{d}f\wedge \mathrm{d}g,
$$
for every $f,g\in\mathcal{C}^{\infty}(\mathbb{R}^3,\mathbb{R})$. 

Regarding the symplectic foliation of the Poisson manifold $(\mathbb{R}^3, \{\cdot,\cdot\}_{-1;C_{\beta}})$, all leaves are \textit{regular} and are given by the hyperbolic paraboloids $\Sigma^{\beta}_{c}:=\{(x,y,z)\in\mathbb{R}^3 : C_{\beta}(x,y,z)= c\}$, $c\in\mathbb{R}$.

After some straightforward computations, it follows that the set of \textit{regular} equilibrium points of the Rikitake system \eqref{rik}, realized as the Hamiltonian dynamical system $(\mathbb{R}^3,\{\cdot,\cdot\}_{1;C},H_{\beta})$, coincides with the full set of equilibrium points, $\mathcal{E}^{X_{\beta}}$, whereas the corresponding set of \textit{non-degenerate regular} equilibrium points is
\begin{equation}\label{regrikiki} 
\left\{(0,0,M) : M\in\mathbb{R}\right\}.
\end{equation}

Equivalently, following the notations introduced in Section $3$, each non-degenerate regular equilibrium point, $(0,0,M)$, $M\in\mathbb{R}$, is a \textit{non-degenerate critical point} of the smooth function $F_{\beta/(2M)}\in\mathcal{C}^{\infty}(\mathbb{R}^3,\mathbb{R})$ given by
\begin{align}\label{gbeta2}
\begin{split}
F_{2 M/\beta}(x,y,z)&=H(x,y,z)+\dfrac{2 M}{\beta}C_{\beta}(x,y,z)\\
&=\dfrac{\beta -M}{2\beta} x^2 + \dfrac{\beta + M}{2\beta} y^2 + z^2 -2 M z,
\end{split}
\end{align}
for every $(x,y,z)\in\mathbb{R}^3$.

Consequently, the associated scalar quantity, $\mathcal{I}_{X^{\beta}}(0,0,M)$, becomes 
\begin{align}\label{invrikiki}
\begin{split}
\mathcal{I}_{X^{\beta}}(0,0,M)&=\nu^2(0,0,M)\cdot \det{(\operatorname{Hess}F_{2 M /\beta}(0,0,M))}\\
&\cdot <[\operatorname{Hess}F_{2 M /\beta}(0,0,M)]^{-1}\cdot\nabla C_{\beta} (0,0,M), \nabla C_{\beta} (0,0,M)>\\
&=(-1)^{2}\cdot\dfrac{2(\beta^2 - M^2)}{\beta^2}\cdot\dfrac{\beta^2}{2}\\
&=\beta^2 - M^2.
\end{split}
\end{align}
\end{enumerate}

Next, we apply the stability criterion introduced in Theorem \eqref{mainthm}, in order to determine the stability of the equilibrium states $(0,0,M)$, of the Rikitake system \eqref{rik}. Next theorem gives an alternative proof of a result from \cite{tudoranSIAM}.

\begin{theorem}\label{mainthmRiki1}
Let $(0,0,M)$, $M\in\mathbb{R}$, be an equilibrium point of the vector field $X^{\beta}$ which generates the Rikitake system \eqref{rik}. 
Then the following implications hold true:
\begin{enumerate}
\item if $|\beta|<|M|$, then the equilibrium point $(0,0,M)$ is unstable,
\item if $|\beta|>|M|$, then the equilibrium point $(0,0,M)$ is Lyapunov stable.
\end{enumerate}
\end{theorem}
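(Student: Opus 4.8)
The plan is to obtain both implications as immediate consequences of the general stability criterion in Theorem~\ref{mainthm}, once the scalar invariant $\mathcal{I}_{X^{\beta}}(0,0,M)$ has been identified.

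The first step is to check that $(0,0,M)$ is a non-degenerate regular equilibrium point, so that $\mathcal{I}_{X^{\beta}}(0,0,M)$ is defined and Theorem~\ref{mainthm} applies. For $M\neq 0$ this is recorded in \eqref{regriki} for the realization $(\mathbb{R}^3,\{\cdot,\cdot\}_{1;C},H_{\beta})$; for $M=0$, which lies on the singular leaf of that first Poisson structure, one instead uses the second realization $(\mathbb{R}^3,\{\cdot,\cdot\}_{-1;C_{\beta}},H)$, for which \eqref{regrikiki} shows that every $(0,0,M)$, including $M=0$, is non-degenerate regular. In both cases the associated function $F_{\overrightarrow{\lambda_{e}}}$, its Hessian at the equilibrium, and the resulting value of the invariant are computed in \eqref{gbeta}--\eqref{invriki} (respectively \eqref{gbeta2}--\eqref{invrikiki}), yielding
\begin{equation*}
\mathcal{I}_{X^{\beta}}(0,0,M)=\beta^2-M^2 .
\end{equation*}
That the two realizations produce the same number is consistent with the realization-independence of $\mathcal{I}_X$ from Theorem~\ref{spec1} (and Theorem~\ref{frstmain}), and serves as a built-in consistency check.

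The second step is purely arithmetic: $\beta^2-M^2<0$ precisely when $|\beta|<|M|$, and $\beta^2-M^2>0$ precisely when $|\beta|>|M|$. Feeding these two alternatives into the respective items of Theorem~\ref{mainthm} yields instability in the first case and Lyapunov stability in the second, which is exactly the claim. The borderline $|\beta|=|M|$ corresponds to $\mathcal{I}_{X^{\beta}}(0,0,M)=0$, the degenerate case deliberately left open after Theorem~\ref{mainthm}, and is therefore not part of the statement.

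I do not expect a genuine obstacle in this argument. The only subtlety worth flagging is that $(0,0,0)$ is not regular for the first Poisson structure, so in order to apply the criterion to it one is forced to use the second Hamiltonian realization --- this is precisely why both realizations are carried along in the example --- and the main genuine (but short) computations are the two $3\times 3$ Hessian and inverse-Hessian evaluations \eqref{invriki} and \eqref{invrikiki}, which should be double-checked since they independently feed the same value $\beta^2-M^2$ into Theorem~\ref{mainthm}.
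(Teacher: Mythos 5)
Your proposal is correct and follows essentially the same route as the paper: it applies Theorem \ref{mainthm} to the value $\mathcal{I}_{X^{\beta}}(0,0,M)=\beta^2-M^2$, using the second Hamiltonian realization $(\mathbb{R}^3,\{\cdot,\cdot\}_{-1;C_{\beta}},H)$ so that the case $M=0$ is covered, exactly as the paper does. The remark that the first realization would lose the origin because it is a singular point of $(\mathbb{R}^3,\{\cdot,\cdot\}_{1;C})$ is also the same observation the paper makes.
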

\begin{proof}
The proof follows from Theorem \eqref{mainthm} applied to the Hamiltonian realization $(\mathbb{R}^3,\{\cdot,\cdot\}_{-1;C_{\beta}},H)$, and the corresponding non-degenerate regular equilibrium point $(0,0,M)$, $M\in\mathbb{R}$. More precisely, we have that
\begin{enumerate}
\item if $\mathcal{I}_{X^{\beta}}(0,0,M)=\beta^2 - M^2 <0$, or equivalently, if $|\beta|<|M|$, then the equilibrium point $(0,0,M)$ is unstable,
\item if $\mathcal{I}_{X^{\beta}}(0,0,M)=\beta^2 - M^2 >0$, or equivalently, if $|\beta|>|M|$, then the equilibrium point $(0,0,M)$ is Lyapunov stable.
\end{enumerate}

Note that the same conclusion follows if one applies the Theorem \eqref{mainthm} to the Hamiltonian realization $(\mathbb{R}^3,\{\cdot,\cdot\}_{1;C},H_{\beta})$, and the corresponding non-degenerate regular equilibrium point $(0,0,M)$, $M\neq 0$. Nevertheless, in this case we loose the information about the equilibrium $(0,0,0)$, since the origin is not a regular point of the Poisson manifold $(\mathbb{R}^3,\{\cdot,\cdot\}_{1;C})$ which generates the Hamiltonian realization $(\mathbb{R}^3,\{\cdot,\cdot\}_{1;C},H_{\beta})$.
\end{proof}

\bigskip
Next two theorems present some stability results concerning the equilibrium states $(0,0,M)$, $M\in\mathbb{R}$, of the Rikitake system \eqref{rik} restricted to the corresponding level sets of the associated first integrals. Before stating the theorems, let us recall that the system \eqref{rik} admits the Hamiltonian realizations given by $(\mathbb{R}^3,\{\cdot,\cdot\}_{1;C},H_{\beta})$, and respectively $(\mathbb{R}^3,\{\cdot,\cdot\}_{-1;C_{\beta}},H)$, where  $C(x,y,z)=H(x,y,z)=\dfrac{1}{2}\left(x^2 +y^2\right)+z^2$, and $C_{\beta} (x,y,z)=H_{\beta} (x,y,z)=\dfrac{1}{4}\left(-x^2 +y^2\right)-\beta z$, for every $(x,y,z)\in\mathbb{R}^3$.

\begin{theorem}\label{specleafrik}
Let $(0,0,M)$, $M\neq 0$, be an equilibrium point of the vector field $X^{\beta}$ which generates the Rikitake system \eqref{rik}, and let $\Sigma_{M^2}=\{(x,y,z)\in\mathbb{R}^3 : C(x,y,z) = M^2\}$ be the (regular) syplectic leaf of the Poisson manifold $(\mathbb{R}^3,\{\cdot,\cdot\}_{1;C})$, which contains $(0,0,M)$. Then the following assertions hold true.
\begin{itemize}
\item [(a)] The characteristic polynomial of $\mathfrak{L}^{X^{\beta}|_{\Sigma_{M^2}}}(0,0,M)$ is given by 
$$
p_{\mathfrak{L}^{X^{\beta}|_{\Sigma_{M^2}}}(0,0,M)}(\mu)= \mu^2 + (\beta^2 - M^2 ).
$$
\item [(b)] If $|\beta|<|M|$, then the equilibrium state $(0,0,M)$ is an unstable equilibrium point of the restricted vector field $X^{\beta}|_{\Sigma_{M^2}}$.

\item [(c)] If $|\beta|>|M|$, then the equilibrium state $(0,0,M)$ is a Lyapunov stable equilibrium point of the restricted vector field $X^{\beta}|_{\Sigma_{M^2}}$.

\item [(d)] If $|\beta|>|M|$, then there exists $\varepsilon_{0}>0$ and a one-parameter family of periodic orbits of $X^{\beta}|_{\Sigma_{M^2}}$ (and hence of $X^{\beta}$ too), $\left\{\gamma_{\varepsilon}\right\}_{0<\varepsilon\leq\varepsilon_0}\subset \Sigma_{M^2}$, that shrink to $(0,0,M)$ as $\varepsilon\rightarrow 0$, with periods $T_{\varepsilon}\rightarrow{\frac{2\pi}{\sqrt{\beta^2 - M^2}}}$ as $\varepsilon\rightarrow 0$. Moreover, the set $\{(0,0,M)\}\cup\bigcup_{0<\varepsilon < \varepsilon_0} \gamma_{\varepsilon}$ represents the connected component of $\Sigma_{M^2}\setminus \gamma_{\varepsilon_{0}}$, which contains the equilibrium point $(0,0,M)$.
\end{itemize}
\end{theorem}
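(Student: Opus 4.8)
The plan is to obtain Theorem \eqref{specleafrik} as a direct specialization of Theorem \eqref{specleaf} to the first Hamiltonian realization $(\mathbb{R}^3,\{\cdot,\cdot\}_{1;C},H_{\beta})$ of the Rikitake system, at the equilibrium point $(0,0,M)$ with $M\neq 0$. First I would record that, by \eqref{regriki}, the point $(0,0,M)$ is a non-degenerate regular equilibrium state of this realization precisely for $M\neq 0$, so the standing hypotheses of Theorem \eqref{specleaf} are satisfied. Next I would identify the relevant symplectic leaf: since $C(0,0,M)=\tfrac{1}{2}(0+0)+M^2=M^2$, the leaf $\Sigma_{\overline{x}_e}$ through $(0,0,M)$ is exactly the ellipsoid $\Sigma_{M^2}=\{(x,y,z)\in\mathbb{R}^3 : C(x,y,z)=M^2\}$, which is a regular (two-dimensional) leaf because $M^2>0$ and the unique singular leaf of $(\mathbb{R}^3,\{\cdot,\cdot\}_{1;C})$ is the origin.

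With these identifications the scalar invariant has already been computed in \eqref{invriki}, namely $\mathcal{I}_{X^{\beta}}(0,0,M)=\beta^2-M^2$. Part (a) then follows immediately from Theorem \eqref{specleaf}(a), which yields $p_{\mathfrak{L}^{X^{\beta}|_{\Sigma_{M^2}}}(0,0,M)}(\mu)=\mu^2+\mathcal{I}_{X^{\beta}}(0,0,M)=\mu^2+(\beta^2-M^2)$. Parts (b) and (c) follow by specializing Theorem \eqref{specleaf}(b) and (c), after noting that the sign condition $\mathcal{I}_{X^{\beta}}(0,0,M)<0$ is equivalent to $\beta^2-M^2<0$, i.e. $|\beta|<|M|$ (instability on the leaf), whereas $\mathcal{I}_{X^{\beta}}(0,0,M)>0$ is equivalent to $|\beta|>|M|$ (Lyapunov stability on the leaf). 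Part (d) follows from Theorem \eqref{specleaf}(d) in the same manner, with limiting period $\tfrac{2\pi}{\sqrt{\mathcal{I}_{X^{\beta}}(0,0,M)}}=\tfrac{2\pi}{\sqrt{\beta^2-M^2}}$, and with the description of $\{(0,0,M)\}\cup\bigcup_{0<\varepsilon<\varepsilon_0}\gamma_{\varepsilon}$ as the connected component of $\Sigma_{M^2}\setminus\gamma_{\varepsilon_0}$ containing $(0,0,M)$ transported verbatim from Theorem \eqref{specleaf}.

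There is no genuine obstacle here: the whole argument is the substitution of $\mathcal{I}_{X^{\beta}}(0,0,M)=\beta^2-M^2$ into the four conclusions of Theorem \eqref{specleaf}. The only step deserving a sentence of care is the bookkeeping identifying the abstract leaf $\Sigma_{\overline{x}_e}$ of Theorem \eqref{specleaf} with the concrete ellipsoid $\Sigma_{M^2}$ and checking that $X^{\beta}|_{\Sigma_{M^2}}$ is indeed the two-dimensional symplectic Hamiltonian vector field to which the Darboux normal form and the Lyapunov Center Theorem were applied; since $M\neq 0$ keeps us away from the singular leaf at the origin, no further computation beyond \eqref{invriki} is needed.
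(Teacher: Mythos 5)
Your proposal is correct and follows exactly the paper's argument: Theorem \eqref{specleafrik} is obtained by specializing Theorem \eqref{specleaf} to the realization $(\mathbb{R}^3,\{\cdot,\cdot\}_{1;C},H_{\beta})$ at the non-degenerate regular equilibrium $(0,0,M)$, $M\neq 0$, using the value $\mathcal{I}_{X^{\beta}}(0,0,M)=\beta^2-M^2$ already computed in \eqref{invriki}. Your additional bookkeeping (identifying the leaf with the ellipsoid $\Sigma_{M^2}$ and checking the sign equivalences) is a harmless elaboration of what the paper leaves implicit.
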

\begin{proof}
The proof follows directly from Theorem \eqref{specleaf} applied to the Hamiltonian realization $(\mathbb{R}^3,\{\cdot,\cdot\}_{1;C},H_{\beta})$ of the the Rikitake system \eqref{rik}, and the associated non-degenerate regular equilibrium state $(0,0,M)$, $M\neq 0$.
\end{proof}

\begin{theorem}\label{specleafrik2}
Let $(0,0,M)$, $M\in\mathbb{R}$, be an equilibrium point of the vector field $X^{\beta}$ which generates the Rikitake system \eqref{rik}, and let $\Sigma^{\beta}_{-\beta M}=\{(x,y,z)\in\mathbb{R}^3 : C_{\beta}(x,y,z) = -\beta M\}$ be the (regular) syplectic leaf of the Poisson manifold $(\mathbb{R}^3,\{\cdot,\cdot\}_{-1;C_{\beta}})$, which contains $(0,0,M)$. Then the following assertions hold true.
\begin{itemize}
\item [(a)] The characteristic polynomial of $\mathfrak{L}^{X^{\beta}|_{\Sigma^{\beta}_{-\beta M}}}(0,0,M)$ is given by 
$$
p_{\mathfrak{L}^{X^{\beta}|_{\Sigma^{\beta}_{-\beta M}}}(0,0,M)}(\mu)= \mu^2 + (\beta^2 - M^2 ).
$$
\item [(b)] If $|\beta|<|M|$, then the equilibrium state $(0,0,M)$ is an unstable equilibrium point of the restricted vector field $X^{\beta}|_{\Sigma^{\beta}_{-\beta M}}$.

\item [(c)] If $|\beta|>|M|$, then the equilibrium state $(0,0,M)$ is a Lyapunov stable equilibrium point of the restricted vector field $X^{\beta}|_{\Sigma^{\beta}_{-\beta M}}$. 

\item [(d)] If $|\beta|>|M|$, then there exists $\widetilde{\varepsilon_{0}}>0$ and a one-parameter family of periodic orbits of $X^{\beta}|_{\Sigma^{\beta}_{-\beta M}}$ (and hence of $X^{\beta}$ too), $\left\{\widetilde{\gamma_{\varepsilon}}\right\}_{0<\varepsilon\leq\widetilde{\varepsilon_{0}}}\subset \Sigma^{\beta}_{-\beta M}$, that shrink to $(0,0,M)$ as $\varepsilon\rightarrow 0$, with periods $T_{\varepsilon}\rightarrow{\frac{2\pi}{\sqrt{\beta^2 - M^2}}}$ as $\varepsilon\rightarrow 0$. Moreover, the set $\{(0,0,M)\}\cup\bigcup_{0<\varepsilon < \widetilde{\varepsilon_{0}}} \widetilde{\gamma_{\varepsilon}}$ represents the connected component of  $\Sigma^{\beta}_{-\beta M}\setminus \widetilde{\gamma_{\widetilde{\varepsilon_{0}}}}$, which contains the equilibrium point $(0,0,M)$.
\end{itemize}
\end{theorem}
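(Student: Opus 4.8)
The plan is to invoke the general leafwise-stability result, Theorem \eqref{specleaf}, applied to the second Hamiltonian realization of the Rikitake system, namely $\left(\mathbb{R}^3,\{\cdot,\cdot\}_{-1;C_{\beta}},H\right)$, together with the equilibrium point $(0,0,M)$. The first thing to verify is that $(0,0,M)$ is indeed a non-degenerate regular equilibrium point of this realization for \emph{every} $M\in\mathbb{R}$; this is exactly the content of \eqref{regrikiki}, and it is precisely the reason one works with this realization rather than with $\left(\mathbb{R}^3,\{\cdot,\cdot\}_{1;C},H_{\beta}\right)$, for which the origin is a singular point of the Poisson structure. Next I would identify the symplectic leaf through $(0,0,M)$: since all leaves of $\left(\mathbb{R}^3,\{\cdot,\cdot\}_{-1;C_{\beta}}\right)$ are the level sets of $C_{\beta}$, and $C_{\beta}(0,0,M)=-\beta M$, the relevant leaf is $\Sigma^{\beta}_{-\beta M}=\{(x,y,z)\in\mathbb{R}^3 : C_{\beta}(x,y,z)=-\beta M\}$, exactly as in the statement.

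The heart of the matter is the value of the scalar invariant, and this has already been computed in \eqref{invrikiki}, namely $\mathcal{I}_{X^{\beta}}(0,0,M)=\beta^2-M^2$. With this in hand, each of the four assertions follows mechanically from the corresponding item of Theorem \eqref{specleaf}. Item (a) gives $p_{\mathfrak{L}^{X^{\beta}|_{\Sigma^{\beta}_{-\beta M}}}(0,0,M)}(\mu)=\mu^2+\mathcal{I}_{X^{\beta}}(0,0,M)=\mu^2+(\beta^2-M^2)$. If $|\beta|<|M|$, then $\mathcal{I}_{X^{\beta}}(0,0,M)=\beta^2-M^2<0$, and item (b) yields instability of $(0,0,M)$ on the leaf. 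If $|\beta|>|M|$, then $\mathcal{I}_{X^{\beta}}(0,0,M)=\beta^2-M^2>0$, and item (c) yields Lyapunov stability on the leaf, while item (d) provides, in the same regime, the one-parameter family $\{\widetilde{\gamma_{\varepsilon}}\}_{0<\varepsilon\leq\widetilde{\varepsilon_{0}}}$ of periodic orbits shrinking to $(0,0,M)$ as $\varepsilon\to 0$, with periods $T_{\varepsilon}\to 2\pi/\sqrt{\mathcal{I}_{X^{\beta}}(0,0,M)}=2\pi/\sqrt{\beta^2-M^2}$, together with the identification of $\{(0,0,M)\}\cup\bigcup_{0<\varepsilon<\widetilde{\varepsilon_{0}}}\widetilde{\gamma_{\varepsilon}}$ as the connected component of $\Sigma^{\beta}_{-\beta M}\setminus\widetilde{\gamma_{\widetilde{\varepsilon_{0}}}}$ containing $(0,0,M)$.

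Since no genuinely new computation is required — the invariant $\mathcal{I}_{X^{\beta}}(0,0,M)$ is already known and Theorem \eqref{specleaf} supplies all the dynamical conclusions — I do not anticipate any real obstacle; this proof will mirror that of Theorem \eqref{specleafrik} essentially verbatim, with the first realization replaced by the second. The only points requiring a moment of care are routine bookkeeping: checking that the hypotheses of Theorem \eqref{specleaf} are satisfied (non-degeneracy and regularity of the equilibrium, established via \eqref{regrikiki}, and the fact that $\nu\equiv-1$ is nonvanishing, so that the Darboux normal form of Theorem \eqref{darbnf} applies on a neighborhood of $(0,0,M)$), and then substituting $\mathcal{I}_{X^{\beta}}(0,0,M)=\beta^2-M^2$ into each of the four clauses.
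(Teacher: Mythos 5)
Your proposal is correct and follows exactly the paper's own argument: apply Theorem \eqref{specleaf} to the realization $\left(\mathbb{R}^3,\{\cdot,\cdot\}_{-1;C_{\beta}},H\right)$ and the non-degenerate regular equilibrium $(0,0,M)$, $M\in\mathbb{R}$, using the value $\mathcal{I}_{X^{\beta}}(0,0,M)=\beta^2-M^2$ already computed in \eqref{invrikiki}. Your verification of the hypotheses and the remark that this realization (unlike the first) covers the origin match the paper's reasoning as well.
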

\begin{proof}
The proof follows directly from Theorem \eqref{specleaf} applied to the Hamiltonian realization $(\mathbb{R}^3,\{\cdot,\cdot\}_{-1;C_{\beta}},H)$ of the the Rikitake system \eqref{rik}, and the associated non-degenerate regular equilibrium state $(0,0,M)$, $M\in\mathbb{R}$. Note that, in contrast to Theorem \eqref{specleafrik}, in this case we obtain also the existence of periodic orbits around the origin. 
\end{proof}


\bigskip
\bigskip

\noindent {\sc R.M. Tudoran}\\
West University of Timi\c soara\\
Faculty of Mathematics and Computer Science\\
Department of Mathematics\\
Blvd. Vasile P\^arvan, No. 4\\
300223 - Timi\c soara, Rom\^ania.\\
E-mail: {\sf tudoran@math.uvt.ro}\\

\end{document}